\newtheorem{theorem}{Theorem}[section]
\newtheorem{proposition}[theorem]{Proposition}
\newtheorem{lemma}[theorem]{Lemma}
\newtheorem{corollary}[theorem]{Corollary}
\newtheorem{remark}[theorem]{Remark}
\numberwithin{equation}{section}
\numberwithin{figure}{section}
\newcommand{\E}{\mathds{E}}
\newcommand{\Prob}{\mathds{P}}
\newcommand{\Prw}{{\mathbf P}^{\scriptscriptstyle{\rm  RW}}}
\newcommand{\Purw}{{\mathbf P}^{\scriptscriptstyle{\rm  CRW}}}
\newcommand{\Lrw}{L^{\scriptscriptstyle{\rm RW}}}
\newcommand{\cErw}{\cE^{\scriptscriptstyle \rm RW}}
\newcommand{\R}{\mathbb{R}}
\newcommand{\N}{\mathbb{N}}
\newcommand{\IND}{{\bf 1}}
\newcommand{\dd}{{\rm d}}
\DeclareMathOperator{\diag}{diag}
\newcommand{\be}{\begin{equation}}
\newcommand{\cC}{\ensuremath{\mathcal C}} 
\newcommand{\cE}{\ensuremath{\mathcal E}} 
\newcommand{\cF}{\ensuremath{\mathcal F}}
\newcommand{\cL}{\ensuremath{\mathcal L}}
\newcommand{\cR}{\ensuremath{\mathcal R}} 
\newcommand{\cS}{\ensuremath{\mathcal S}}
\newcommand{\bbN}{{\ensuremath{\mathbb N}} }
\newcommand{\si}{\sigma} 
\let\a=\alpha \let\b=\beta     
      \let\k=\kappa  \let\l=\lambda
\def\({\left(}
\def\){\right)}
\title[Cutoff for the Averaging process]{Cutoff for the Averaging process\\ on the hypercube and complete bipartite graphs}
 \subjclass[2020]{Primary 60K35; secondary 82B20; 82C26.}
 \keywords{mixing of Markov chains; cutoff phenomenon; averaging process} 
\author{Pietro Caputo, Matteo Quattropani,  and Federico Sau}
\address{Pietro Caputo\\ Universit\`{a} Roma Tre}
\email{pietro.caputo@uniroma3.it}
\address{Matteo Quattropani\\ Sapienza Universit\`{a} di Roma}
\email{matteo.quattropani@uniroma1.it}
\address{Federico Sau\\ Institute of Science and Technology Austria (ISTA)}
\email{federico.sau@ist.ac.at}
\begin{document}
\maketitle
\begin{abstract} We consider the averaging process on a graph, that is the evolution of a mass  distribution undergoing repeated averages along the edges of the graph at the arrival times of independent Poisson processes. We establish cutoff phenomena for both the $L^1$ and $L^2$ distance from stationarity when the graph is a discrete hypercube and when the graph is complete bipartite. Some general facts about the averaging process on arbitrary graphs are also discussed.   
\end{abstract}
\section{Introduction}
The averaging process on a finite graph $G=(V,E)$, referred to as ${\rm Avg}(G)$ below, is the stochastic process defined as follows. Start with some  probability mass function $\eta$ on $V$ and consider independent Poisson clocks with rate $1$ on each edge $xy\in E$; when $xy$ rings, update both masses $\eta(x)$ and $\eta(y)$  to their average value $\frac12(\eta(x)+\eta(y))$.  As soon as the underlying graph is connected, whatever the initial distribution $\eta$, the dynamics converges to the uniform distribution $\pi\equiv1/|V|$, and just as for the simple random walk on $G$, the convergence rate associated to this process is an important feature of the underlying graph. 
Unlike the case of simple random walks however,  the information available in the literature on  convergence rates for ${\rm Avg}(G)$ is rather limited. The mathematical analysis of the averaging  process started about a decade ago with the work of Aldous and Lanoue \cite{aldous_lecture_2012}. More recently, for the special case of the complete graph a full account  was given in \cite{chatterjee2020phase}, where the authors proved the existence of a cutoff phenomenon. On the contrary,  in \cite{quattropani2021mixing} the process was shown  to have no cutoff on ``finite-dimensional" graphs such as the $d$-dimensional grid, that is graphs satisfying a suitable Nash inequality which guarantees the existence of a well-defined limiting heat flow.
Very little is known for other families of graphs.  We refer to the recent work \cite{movassagh_repeated2022}  for some partial results, and for some motivating connections to 
opinion dynamics, random distributed algorithms, and quantum computing.
 
 In this paper we study the convergence to stationarity of  ${\rm Avg}(G)$ when $G=\{0,1\}^d$ is the hypercube graph with $n=2^d$ vertices, and when $G=K_{m,n-m}$ is the complete bipartite graph with the two parts of size $m$ and $n-m$, respectively. 
 In both cases, we are ultimately interested in quantifying, as $n\to \infty$, the (mean) $L^p$-distance to equilibrium for $p=1,2$, that is 
 \begin{align}\label{eq:tfun}
 	(t,\xi)\longmapsto  \color{black}	\E_\xi\left[\left\|\frac{\eta_t}{\pi}-1\right\|_{p}^p \right]^{\frac{1}{p}}\ ,
 \end{align}
 where $\eta_t$ denotes the 
 mass distribution of ${\rm Avg}(G)$ at time $t\ge 0$, while  $\E_\xi$ stands for the expectation over 
 the random updates when starting from the configuration $\eta_0=\xi$,
and  we write $\|\cdot\|_p$ for 	the $L^p(\pi)$-norm with respect to $\pi$, the uniform distribution on the vertex set $V$. For $p\geq1$, convexity  shows that for any fixed $\xi$ the function above is monotone nonincreasing in $t\geq 0$. When $p=1$, \eqref{eq:tfun} is twice the expected value of the usual total variation distance between $\eta_t$ and $\pi$. 
For worst-case initial conditions $\xi$, we analyze the distance to equilibrium at times of the form
 \begin{align}T(a)=t_{\rm mix}+aw\, ,\qquad a\in\R\, ,
 	\end{align}
 for some appropriate choices of $t_{\rm mix}=t_{\rm mix}(n)$ and $w=w(n)$ such that $t_{\rm mix}\gg w$, and prove \emph{cutoff} results around the mixing time $t_{\rm mix}$, with $w$ controling the size of the so-called \emph{cutoff window}. We refer e.g.\ to \cite{levin2017markov} for background on the cutoff phenomenon for Markov chains.

For the hypercube we establish that both the $L^1$ and $L^2$ distances have a cutoff phenomenon with window $w=O(1)$, and  mixing time $t_{\rm mix}=\frac{1}{2}\log d$, which coincides with the $L^p$-mixing time of the lazy 
random walk on  $\{0,1\}^d$ where the $d$ coordinates flip independently at rate $\frac{1}{2}$; see, e.g., \cite{levin2017markov}.
This settles a question that was mentioned as an open problem in \cite{aldous_lecture_2012}.  
 \begin{theorem}[{$G=\{0,1\}^d$}: Cutoff in $L^p$, {$p=1,2$}]
 \label{th:hypercube}
 Let $n=2^d$,
and
\begin{align}\label{eq:T-hypercube}T(a):=\frac{1}{2}\log d+a\ ,\qquad a\in\R\ .	
\end{align}
  Then, 
 \begin{equation}
 	\lim_{a\to\infty}\limsup_{n\to\infty}\sup_{\xi}	\E_\xi\left[\left\|\frac{\eta_{T(a)}}{\pi}-1\right\|^p_{p}\right] =0\ ,\qquad p=1,2\ ,
 \end{equation}
 and
 \begin{equation}
 	\lim_{a\to-\infty}\liminf_{n\to\infty}\sup_{\xi}	\E_\xi\left[\left\|\frac{\eta_{T(a)}}{\pi}-1\right\|^p_{p}\right] =\begin{cases}
 		2&\text{if }p=1\\
 		+\infty&\text{if }p=2\ .
 	\end{cases}
 \end{equation}
 \end{theorem}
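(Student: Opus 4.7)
The plan is to diagonalize $\|\eta_t/\pi-1\|_2^2$ along the characters of $\{0,1\}^d$ and reduce to a closed ODE system for the second moments of Fourier coefficients. For $S\subseteq [d]$, set $\chi_S(x)=\prod_{i\in S}(-1)^{x_i}$ and $\phi_t(S) := \sum_x \eta_t(x)\chi_S(x)$. By Parseval on the uniform $\pi$, $\|\eta_t/\pi - 1\|_2^2 = \sum_{S\neq \emptyset}\phi_t(S)^2$. Direct accounting of the effect of a single edge ring on $\phi_t(S)$ (the ringing edge's differing coordinate $i$ either lies in $S$, forcing $\chi_S(v)=-\chi_S(u)$ and $\phi_t(S)\mapsto \phi_t(S) - (\eta_t(u)-\eta_t(v))\chi_S(u)$, or it does not, in which case $\phi_t(S)$ is unchanged) yields the exact closed second-moment system
\begin{align*}
\frac{\dd}{\dd t}\,\E\bigl[\phi_t(S)^2\bigr] = -2|S|\,\E\bigl[\phi_t(S)^2\bigr] + \frac{2}{n}\sum_T |S\cap T|\,\E\bigl[\phi_t(T)^2\bigr].
\end{align*}
For the worst case $\xi=\delta_{x_0}$ (which maximizes $\phi_0(S)^2\equiv 1$), the coordinate-permutation symmetry of $\{0,1\}^d$ reduces this to the rank-one perturbed system $\dot A_t(k) = -2kA_t(k) + (2k/n)B(t)$ with $B(t) = \sum_j \binom{d-1}{j-1}A_t(j)$ and $A_0(k)=1$, and one obtains the exact decomposition
\begin{align*}
\E\bigl[\|\eta_t/\pi-1\|_2^2\bigr] = \bigl((1+e^{-2t})^d - 1\bigr) + \frac{2d}{n}\int_0^t e^{-2(t-s)}(1+e^{-2(t-s)})^{d-1} B(s)\,\dd s.
\end{align*}
The first summand equals $\|\E[\eta_t]/\pi - 1\|_2^2$, i.e.\ the chi-square distance of the continuous-time hypercube random walk with per-coordinate flip rate $1/2$; at $t=T(a)$ it converges to $e^{e^{-2a}}-1$ as $n\to \infty$, vanishing for $a\to+\infty$ and diverging for $a\to -\infty$.

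The main technical step will be to show that the correction integral above is $o(1)$ as $n\to\infty$ at $t=T(a)$. Substituting the integrated representation of $A_t(k)$ back into $B$ yields the scalar Volterra equation
\begin{align*}
B(s) = e^{-2s}(1+e^{-2s})^{d-1} + \frac{2}{n}\int_0^s B(u)\,e^{-2(s-u)}(1+e^{-2(s-u)})^{d-2}\bigl(1+de^{-2(s-u)}\bigr)\,\dd u.
\end{align*}
The plan is to bound $B$ by iterating this Volterra operator and exploiting the explicit binomial identities behind the kernel, in order to conclude that $B(s)$ is bounded by $e^{-2s}$ times a factor uniformly controlled in $d$ on the relevant time interval. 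The change of variable $u=e^{-2s}$ (which converts the exponential factors into polynomials) then reduces the correction to a single integral whose asymptotic evaluation is $O((\log d)/d)$ at $t=T(a)$, yielding $\E\|\eta_{T(a)}/\pi-1\|_2^2\to e^{e^{-2a}}-1$ as $n\to\infty$. This settles the $L^2$ upper bound for $a\to+\infty$; the $L^1$ upper bound follows by Cauchy--Schwarz, $\|\cdot\|_1\leq\|\cdot\|_2$.

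For the lower bounds, Jensen's inequality gives $\E\bigl[\|\eta_t/\pi-1\|_p^p\bigr]\geq \|\E[\eta_t]/\pi-1\|_p^p$ for $p=1,2$, reducing both to the $L^p$-distance of the above random walk from $\pi$. For $p=2$ this is $(1+e^{-2T(a)})^d - 1\to+\infty$ as $a\to -\infty$. For $p=1$ this is $2\|p_{T(a)}(x_0,\cdot)-\pi\|_{\mathrm{TV}}$, and a standard CLT-based test on the Hamming weight (whose law under $p_{T(a)}(x_0,\cdot)$ is $\mathrm{Bin}(d,(1-e^{-T(a)})/2)$, with mean shifted from the uniform mean $d/2$ by $\tfrac{1}{2}\sqrt{d}\,e^{-a}+o(\sqrt d)$, a gap dominating the $O(\sqrt d)$ CLT fluctuations as $a\to -\infty$) gives $\|p_{T(a)}-\pi\|_{\mathrm{TV}}\to 1$, and hence the $L^1$ lower bound of $2$.

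The hardest step is expected to be the Volterra estimate on $B$: the naive spectral-gap argument applied to the $L^2$-energy decay gives only $\E\|\eta_t/\pi-1\|_2^2\leq (n-1)e^{-t}$, hence the suboptimal mixing time $\log n = d\log 2$. Recovering the true mixing time $\frac{1}{2}\log d$ (an exponential-in-$d$ improvement) requires the full binomial structure of the coupling term $B(t)$, so that the self-consistent Volterra feedback inflates the natural decay scale of $B$ only by a factor uniformly controlled in $d$ rather than by an $n$-dependent constant. The two-particle dual interpretation $\E[\phi_t(S)^2]=\E[\chi_S(X_1(t))\chi_S(X_2(t))]$, where $X_1,X_2$ are two coupled hypercube random walks started from the same vertex and sharing the edge Poisson clocks, can be used as an additional probabilistic input to control this self-consistency.
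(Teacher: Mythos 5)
Your approach is genuinely different from the paper's. The paper reduces $\E_{\IND_{x_0}}[\|\eta_t/\pi-1\|_2^2]$ to $2^d\Purw_{x_0,x_0}(X_t=Y_t)-1$ via the two-particle duality \eqref{eq:duality-bin2}, projects the coupled walk onto Hamming distance to obtain a rank-one perturbation of the continuous-time Ehrenfest urn, and then invokes the Hermon--Peres characterization of $L^2$-mixing via hitting times together with Weyl's eigenvalue interlacing and a discrete Hardy inequality. You instead expand $\eta_t$ in the character basis $\chi_S$, derive a closed linear ODE system for $\E[\phi_t(S)^2]$, reduce by coordinate-permutation symmetry to a rank-one coupled scalar system, and arrive at an exact decomposition whose first term is precisely the lower bound $\|\pi^\xi_t/\pi-1\|_2^2$ of Proposition~\ref{prop:general-lb}. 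I checked the computations through the exact decomposition and the Volterra equation for $B$ and they are correct; the lower bounds via Jensen (Proposition~\ref{prop:general-lb}) and the CLT test on Hamming weight are also fine, as is closing $L^1$ with $\|\cdot\|_1\le\|\cdot\|_2$.

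The problem is the step you yourself flag as the hardest: you do not actually prove the Volterra estimate, you only announce a plan. This is not a small gap to be filled by a routine computation. To make the argument work one needs something like $B(s)\le C\,\Phi(s)$ for a $d$-independent constant $C$, with $\Phi(s):=e^{-2s}(1+e^{-2s})^{d-1}$, uniformly on $[0,T(a)]$. To get this by iterating the Volterra operator $T$ (where $B=\Phi+TB$), one must establish $\sup_{t}\,(T\Phi)(t)/\Phi(t)\le\alpha<1$; my back-of-envelope evaluation of the resulting integral
\[
\frac{(T\Phi)(t)}{\Phi(t)}=\frac{1}{2^d(1+y)^{d-1}}\int_y^1\frac{(1+y/x)^{d-2}(1+dy/x)(1+x)^{d-1}}{x}\,\dd x\ ,\qquad y=e^{-2t}\ ,
\]
shows the integrand concentrates near the two endpoints $x\approx y$ and $x\approx 1$, with the $x\approx y$ boundary layer contributing $\approx\tfrac{1}{2}$ as $d\to\infty$. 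So the operator norm is in fact about $\tfrac12$, not small, and the claim does appear to go through with $C\approx 2$ --- but demonstrating this rigorously and uniformly in $t$ requires a careful Laplace-type asymptotic analysis of the kernel on several time scales, comparable in technical weight to the paper's birth-and-death chain analysis. Crude bounds are hopeless: bounding $T$ by $\sup|K|$ gives $\|T^k\Phi\|\lesssim (dt/2)^k/k!\cdot 2^{d-1}$, which at $t=\tfrac12\log d+a$ is astronomically large. Also, as literally written, the claim that ``$B(s)$ is bounded by $e^{-2s}$ times a factor uniformly controlled in $d$ on the relevant time interval'' is false at $s=0$, where $B(0)=2^{d-1}$; the correct comparator is $\Phi(s)$ and the $(1+e^{-2s})^{d-1}$ factor cannot be dropped. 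In short: the framework is correct and, as far as I can tell, the plan is workable, but the central Volterra estimate that replaces the paper's Proposition~\ref{prop:l2cutoff-S} is missing, and that is where essentially all of the difficulty of the theorem lives.
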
 
 In the case of complete bipartite graphs $G=K_{m,n-m}$, we establish two cutoff results in $L^1$ and $L^2$, with distinct locations and different sizes for the cutoff windows.  Our results apply to all sizes $1\le m\le n/2$. The two special cases $m=1$ (star graph) and $m=n/2$ (regular case) were discussed as open problems in \cite{movassagh_repeated2022}. 
 \begin{theorem}[{$G=K_{m,n-m}$}: Cutoff in $L^1$]
 \label{th:K1}
 Let, for all $1\le m\le n/2$,
 \begin{align}\label{eq:T-K1} T(a):=\frac{n}{2(n-m)}\frac{\log_2n}{m}+ a\frac{\sqrt{\log n}}{m}\ ,\qquad a\in\R\ .
\end{align}
 Then, 
\begin{equation}
\lim_{a\to\infty}\limsup_{n\to\infty}\sup_{\xi}	\E_\xi\left[\left\|\frac{\eta_{T(a)}}{\pi}-1\right\|_{1}\right] =0 \ ,
	\end{equation}
and
\begin{equation}
	\lim_{a\to-\infty}\liminf_{n\to\infty}\sup_{\xi}	\E_\xi\left[\left\|\frac{\eta_{T(a)}}{\pi}-1\right\|_{1}\right] =2\ .
\end{equation}
 \end{theorem}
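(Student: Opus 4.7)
\noindent\emph{Reduction.} By the convexity of $\xi \mapsto \E_\xi \|\eta_t/\pi - 1\|_1$, the supremum over initial probability measures is attained at a delta mass $\xi = \delta_{v_0}$; by vertex-transitivity within each part of $K_{m,n-m}$, only the two cases $v_0 \in A$ and $v_0 \in B$ need to be analyzed, and a monotonicity consideration should identify the slower (worst) case as $v_0 \in A$, the smaller side. Fix such a $v_0$ and write $\eta_t(v) = W_t(v, v_0)$, where $W_t$ is the random doubly stochastic matrix obtained by composing the edge-averaging operators along the realized Poisson ringings.

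\emph{Upper bound.} The plan is to prove that, with high probability as $a \to \infty$, $\max_v n\,\eta_{T(a)}(v) = 1 + o(1)$. This suffices since
\[
\|\eta_t/\pi - 1\|_1 \;=\; 2\!\sum_{v\colon \eta_t(v) > 1/n}\!\!(\eta_t(v) - 1/n) \;\le\; 2\bigl(\max_v n\,\eta_t(v) - 1\bigr).
\]
To control the maximum I would use high-moment estimates via the $p$-particle duality
\[
\E[\eta_t(v)^p] \;=\; \mathbb{P}_{v^{\otimes p}}\bigl(X^1_t = \cdots = X^p_t = v_0\bigr),
\]
where $(X^1_t, \ldots, X^p_t)$ are $p$ walkers on $K_{m,n-m}$ which, at each edge ringing they are incident to, independently flip a fair coin to stay or to jump to the other endpoint. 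Choosing $p$ of order $\log n$ and combining Markov's inequality with a union bound over vertices would yield the required bound on $\max_v \eta_t(v)$. The main obstacle is the positive correlation among the dual walkers: they tend to cluster on the same vertex, so the coincidence probability a priori exceeds the independent bound $n^{-p}$. Resolving this requires a quantitative analysis of the separation time for walkers starting at a common vertex, exploiting the fast rate $\Theta(m)\vee\Theta(n-m)$ at which a walker leaves any fixed vertex, together with the high symmetry of $K_{m,n-m}$ in the joint $p$-particle generator.

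\emph{Lower bound.} Use the identity
\[
\|\eta/\pi - 1\|_1 \;=\; 2 - 2\sum_v\bigl(\eta(v) \wedge 1/n\bigr),
\]
so it suffices to show $\sum_v(\eta_{T(-a)}(v) \wedge 1/n) \to 0$ in probability as $a \to \infty$. The mechanism is Poisson concentration of per-vertex ring counts: each $B$-vertex $y$ has $N_t(y) \sim \mathrm{Poisson}(mt)$ rings up to time $t$, and $m\,T(-a) = \frac{n\log_2 n}{2(n-m)} - a\sqrt{\log n}$. Since roughly $\log_2 n$ halvings are needed to dissipate a unit of mass down to order $1/n$, for $a$ large a positive fraction of $B$-vertices will have seen significantly fewer rings than critical, so their masses exceed $1/n$ by a factor of order $2^{\Theta(\sqrt{\log n})}$; these form a ``heavy'' set of size $o(n)$ that carries essentially all of the mass, delivering the claimed lower bound. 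The main technical difficulty is to make this rigorous: one must track how the initial unit mass at $v_0$ percolates through the bipartite graph via repeated averagings, in the presence of intricate correlations coming from the shared ring history, and in particular identify the correct Gaussian window $\sqrt{\log n}/m$ arising from the fluctuations of $N_t(y)$.
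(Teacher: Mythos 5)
The upper bound strategy has a fatal flaw. You propose to show that with high probability $\max_v n\,\eta_{T(a)}(v) = 1 + o(1)$, and then conclude $\|\eta_{T(a)}/\pi - 1\|_1 \le 2(\max_v n\,\eta_{T(a)}(v) - 1) = o(1)$. But this claim cannot hold: the $L^1$ cutoff time $\frac{n}{2(n-m)}\frac{\log_2 n}{m}$ is strictly smaller than the $L^2$ cutoff time $\frac{\log n}{\theta m}$ established in Theorem \ref{th:K2} (the gap is of order $\log n / m$, far larger than the window $\sqrt{\log n}/m$), so at time $T(a)$ for any fixed $a$ one has $\E_\xi\|\eta_{T(a)}/\pi-1\|_2^2 \to \infty$ as $n\to\infty$. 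Since $\|\eta/\pi - 1\|_\infty \ge \|\eta/\pi-1\|_2$, this forces $\max_v n\,\eta_{T(a)}(v) \to \infty$: at the $L^1$ cutoff a small but persistent fraction of the mass is concentrated on a few spiky vertices, which is precisely why the $L^1$ distance can be small while the $L^2$ distance is still diverging. Your inequality therefore gives no nontrivial information at the relevant time scale, and the high-moment duality machinery you propose to develop would not salvage it.

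The paper's actual upper bound takes a different route that separates the spikes from the bulk. One discretizes the unit mass into $2^H$ chunks with $H = \lfloor \log_2 n - (\log n)^{1/3}\rfloor$, tracks a sub-process $w_t \le \eta_t$ where chunks get killed when a bad collision or over-split occurs, and shows (Lemma \ref{lem:technical1}, via a CLT for the number of block-to-block alternations of a tagged chunk) that at time $t^{-c}_{\rm mix}$ a $1-\varepsilon$ fraction of chunks survive with $\alpha$ in $[\log_2 n - b\sqrt{\log n}, H]$, so their masses lie in $[2^{-H}, 2^{b\sqrt{\log n}}/n]$. Discarding the heavy remainder leaves a configuration $\tilde\eta$ with $|\tilde\eta| \ge 1 - \varepsilon$ and $\|\tilde\eta\|_\infty \le 2^{b\sqrt{\log n}}/n$; one then applies the Aldous--Lanoue $L^2$ contraction (Proposition \ref{prop:AL}) for an additional $\Theta(\sqrt{\log n}/m)$ time to flatten $\tilde\eta$. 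The spiky $\varepsilon$ fraction is simply bounded by $2\varepsilon$ in $L^1$. This decomposition is precisely what circumvents the obstacle you ran into.

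Your lower bound correctly identifies the relevant identity and the correct mechanism (Poisson concentration of the number of halvings, a $\sqrt{\log n}$ Gaussian window), but remains a heuristic; the paper's proof uses the same chunk process to show that when $t = t^{-\lambda}_{\rm mix}$, a tagged chunk with high probability has $\alpha_t(u) \le H < \log_2 n$ and $\beta_t(u) = 0$, so its mass is $\ge 2^{-H} \gg 1/n$, and summing over chunks gives $\sum_v (\eta_t(v) - 1/n)_+ \ge 1 - \varepsilon - 2^H/n$. Both of your directions explicitly acknowledge an unresolved "main obstacle" or "main technical difficulty"; those difficulties are in fact the substance of the paper's proof, and in the upper-bound case the strategy you propose around them cannot succeed for the structural reason above.
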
 

Concerning the $L^2$-norm, besides determining the size of the cutoff window, we also identify  the so-called \emph{cutoff profile}.

\begin{theorem}[{$G=K_{m,n-m}$}: Cutoff in $L^2$]
 \label{th:K2}
 Let, for all $1\le m\le n/2$,
 \begin{align}
	\label{eq:T-K2}
 T(a):=\frac{\log n}{\theta m}+ \frac{a}{\theta m}\ ,\qquad a\in\R\ ,
\end{align}
 where 
 \begin{align}\label{eq:def-rho}
 	\theta=\theta_{m,n-m}:= \frac{3n}{8m}\left(1-\sqrt{1-\frac{32}{9}\frac{m}{n}\left(\frac{n-m}{n}\right)} \right) \in \left[\frac{1}{2},\frac{2}{3}\right]\,.
 \end{align}
Then, assuming that $m/n\to b\in[0,1/2]$,
\begin{equation}
\lim_{n\to\infty}\sup_{\xi}	\E_\xi\left[\left\|\frac{\eta_{T(a)}}{\pi}-1\right\|^2_{2}\right] =\left(1+D(b)\right)e^{-a}\ ,\qquad a\in\R\ ,
\end{equation}
where 
\begin{equation}
	\label{eq:Db}	D(b):=\frac{3-4b-\sqrt{9-32b+32b^2}}{2\sqrt{9-32b+32b^2}}\in [0,\tfrac{\sqrt 6-2}{4}]\ ,\qquad b\in[0,1/2]\ .
\end{equation}
  \end{theorem}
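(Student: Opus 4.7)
The plan is a spectral analysis of the two-particle dual of the averaging process on $K_{m,n-m}$, exploiting the $S_m\times S_{n-m}$ symmetry to reduce the eigenvalue problem to a $5\times 5$ matrix.

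\textbf{Reduction to a two-particle chain.} I would first write $\E_\xi[\|\eta_t/\pi-1\|_2^2]=n\,\E_\xi\sum_x\eta_t(x)^2-1$ and invoke the pair duality $\E_\xi[\eta_t(x)\eta_t(y)]=\E^{(2)}_{(x,y)}[\xi(X_t^{(1)})\xi(X_t^{(2)})]$, where $(X_t^{(1)},X_t^{(2)})$ is the Markov chain on $V^2$ whose generator resamples each particle currently at $u$ or $v$ independently and uniformly from $\{u,v\}$ whenever edge $\{u,v\}$ rings at rate one. This gives $\E_\xi[\sum_x\eta_t(x)^2]=\xi^\top A_t\xi$ with $A_t(y,z)=\Prob^{(2)}_{(y,z)}(X_t^{(1)}=X_t^{(2)})$. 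Since $\sum_x\eta_t(x)^2\geq 0$ pathwise and $\eta_t$ depends linearly on $\eta_0$ on a fixed clock realisation, the matrix $A_t$ is positive semidefinite, so the convex quadratic $\xi\mapsto \xi^\top A_t\xi$ attains its supremum over the probability simplex at a vertex, reducing the problem to computing $A_t(x_0,x_0)$ and maximising over $x_0\in V$.

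\textbf{Symmetry quotient and leading eigenvalue.} The generator $G^{(2)}$ is self-adjoint for the uniform measure on $V^2$ and commutes with $S_m\times S_{n-m}$, whose orbits are $D_A=\{(a,a):a\in A\}$, $D_B=\{(b,b):b\in B\}$, the two same-side off-diagonal sets $S_A,S_B$, and the mixed set $\cM=(A\times B)\cup(B\times A)$. Since both $\IND_{\{X^{(1)}=X^{(2)}\}}=f_1+f_2$ and the orbit of $(x_0,x_0)$ are symmetric, $A_t(x_0,x_0)=(e^{tQ}(e_1+e_2))_i$ with $i\in\{1,2\}$ according to the part containing $x_0$, where $Q$ is an explicit $5\times 5$ matrix in the orbit basis. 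Solving $Qv=\mu v$ by eliminating $v_1,\dots,v_4$ in favour of $v_5$ yields a scalar rational equation whose poles are the roots of $p_1(\mu)=\mu^2+\tfrac{3n}{4}\mu+\tfrac{m(n-m)}{2}$ and $p_2(\mu)=(\mu+m)(\mu+n-m)$. The five eigenvalues consist of $0$, a pair close to the roots of $p_1$, and a pair close to the roots of $p_2$; since $\theta<1$, the unique non-zero eigenvalue closest to $0$ is $\mu_\star=-\theta m+O(1)$ — the larger root of $p_1$, which matches the formula for $\theta$ in the statement — and all other non-zero eigenvalues have magnitude at least $\tfrac{3}{2}\theta m$, so their contribution to $A_{T(a)}$ is $o(1)$.

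\textbf{Eigenvector and profile.} From rows $1$--$4$ of the eigen-equation I obtain $v_1\propto(n-m)(\mu_\star+m)$ and $v_2\propto m(\mu_\star+n-m)$, with $v_3,v_4,v_5$ subdominant once $v$ is normalised in the weighted inner product $\|v\|_D^2=\sum_i v_i^2\,|\mathrm{orb}_i|$. Self-adjointness of $Q$ in this inner product yields $(\pi_{\mu_\star})_{i,j}=v_iv_j\,|\mathrm{orb}_j|$ for the spectral projector, so $(\pi_{\mu_\star}(e_1+e_2))_i=v_i[v_1m+v_2(n-m)]$. Substituting $\mu_\star=-\theta m$ and using $p_1(-\theta m)=0$ together with $\Delta^2:=9-32b(1-b)$, a direct algebraic simplification gives $K_B:=(\pi_{\mu_\star}(e_1+e_2))_2=\tfrac{\Delta+3-4b}{2\Delta}=1+D(b)$ and $K_A=rK_B$ with $r=v_1/v_2=\tfrac{(1-b)(1-\theta)}{1-b(1+\theta)}$. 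The inequality $r\leq 1\iff \theta(2b-1)\leq 0$ holds throughout $b\in[0,1/2]$, so the worst-case initial condition is $\xi=\delta_{x_0}$ with $x_0$ in the larger part $B$; combining with $\mu_\star T(a)=-(\log n+a)+o(1)$ then produces $nA_{T(a)}(x_0,x_0)-1\to K_B\,e^{-a}=(1+D(b))e^{-a}$, as claimed.

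\textbf{Main obstacle.} The delicate point is the quantitative control of the shift $\mu_\star+\theta m=O(1)$ uniformly in $n$. In the regime $m=O(1)$ (e.g.\ the star graph $m=1$), $T(a)\asymp\log n$, and an $O(1)$ shift of the eigenvalue could produce a multiplicative $n^{O(1/(\theta m))}$ factor that must be shown to equal $1+o(1)$. This forces an expansion of the implicit characteristic equation for $\mu_\star$ to the next order in $n$, and simultaneously a quantitative bound on the other four eigenvalues ensuring their contributions remain subdominant across the full range $1\leq m\leq n/2$.
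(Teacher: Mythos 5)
Your proposal follows the same overall strategy as the paper: reduce to the five-state orbit chain via the two-particle dual, do a spectral decomposition, locate the small non-zero eigenvalue near $-\theta m$, and read off the profile constant from the corresponding eigenvector. Your eigenvector ratio $v_1/v_2=\tfrac{(n-m)(\mu_\star+m)}{m(\mu_\star+n-m)}$ obtained by Cramer's rule from rows $1$--$2$ and $4$--$5$ of $Qv=\mu v$ is correct and, after simplification using $\theta=\tfrac{3-B}{8b}$ and $B^2=9-32b+32b^2$, matches the paper's $A\sqrt{(1-b)/b}$; the algebra giving $K_B=1+D(b)$ and the worst-case comparison $r\leq 1$ iff $b\leq\tfrac12$ also check out. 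One factual slip: the ``pair close to the roots of $p_2(\mu)=(\mu+m)(\mu+n-m)$'' is not where the remaining eigenvalues sit. There is an \emph{exact} eigenvalue $\rho_2=n/2$ of $-Q$, which is a root of $p_1p_2$ only when $m=n/2$; for a star graph ($m=1$) the non-zero spectrum of $-Q$ is approximately $\{\theta m,\,n/2,\,3n/4,\,n\}$, so nothing is near $m=1$. The poles of your rational function at the roots of $p_1p_2$ are simply not where the zeros of the numerator fall. This does not sink the argument, since all of $\rho_2,\rho_3,\rho_4$ are in fact $\Theta(n)$ (the paper proves $\geq \tfrac{2}{5}n$ via Weyl's theorem on the symmetrized and $n$-normalized matrix $W=S+R$ with $\|R\|=O(n^{-1/2})$), and that is more than enough to kill their contribution at time $T(a)$.

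The genuine gap is exactly the one you flag at the end, and it is not a cosmetic point. An error $\mu_\star=-\theta m+O(1)$ is useless when $m=O(1)$: it contaminates both $e^{-\rho_1 T(a)}n$ (through the exponent, as you note) \emph{and} the profile constant, since with $m=1$ a unit shift in $\mu_\star$ changes $\mu_\star+m=m(1-\theta)+O(1)$, hence $v_1/v_2$, by a constant factor. The paper resolves this by dividing the characteristic polynomial by the known factor $\lambda(\lambda-\tfrac{n}{2})$ to get an explicit cubic $q(\lambda)$, then showing $q$ changes sign inside a window of width $O(m/n)$ around $\theta m$, which together with the $\Theta(n)$ separation of the other roots gives $\rho_1=m(\theta+O(1/n))$. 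That refinement is what makes $\rho_1 T(a)=\log n+a+O(\log n/n)$ across the full range $1\leq m\leq n/2$. Your plan — ``expand the characteristic equation to the next order in $n$'' — is the right idea, but as written the proposal only acknowledges the need for it without supplying the estimate, so the argument is incomplete precisely in the regime ($m$ bounded, e.g.\ the star graph) singled out in the introduction as one of the motivating open cases.
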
 

\medskip
We refer the reader to Section \ref{sec:comparison} below for further discussions around these results and for a comparison with  previously known facts.   The rest of the paper is organized as follows. In Section \ref{sec:2} we give formal definitions, recall and establish a few general facts that hold for ${\rm Avg}(G)$ on any graph $G$. In Section \ref{sec:hypercube} we prove Theorem \ref{th:hypercube}, in Section \ref{sec:complete-bipartite-l1} we prove Theorem \ref{th:K1}, and finally in Section \ref{sec:complete-bipartite-l2} we prove Theorem \ref{th:K2}.

\section{The Averaging process on arbitrary graphs: general facts}\label{sec:2}
The goal of this section is to collect a number of preliminary facts about  the Averaging process on an arbitrary (unweighted) graph. These general observations allow a first, not necessarily optimal, control on the mixing times.   Some of them can be found in the literature only in a discrete-time setting (e.g., \cite{movassagh_repeated2022}), others were formulated for more general reference measures $\pi$ (e.g., \cite{quattropani2021mixing}), or for weighted graphs (e.g., \cite{aldous_lecture_2012,quattropani2021mixing}); the entropy inequalities from \cite{bristiel_caputo_entropy_2021} were not originally related to the Averaging process. While not everything discussed in this section will be needed in the proof of our main results,  we believe that a unified presentation is beneficial and could be of use as a reference for future research.


\subsection{Averaging process vs.\ 
(lazy) random walk}
We start with a simple observation relating the expectation of ${\rm Avg}(G)$ with the transition probability of a random walk on the same graph. 

Given a finite connected graph $G=(V,E)$, with $|V|=n$, the Averaging process ${\rm Avg(G)}$ is the continuous-time Markov process $(\eta_t)_{t\ge 0}$ on the set $\Delta(V)$ of probability measures on $V$ with infinitesimal generator $\cL=\cL^{\scriptscriptstyle \rm Avg}$ given by
\begin{align}\label{eq:generator-avg}
	\cL f(\eta)=\sum_{xy\in E}\big(f(\eta^{xy})-f(\eta)\big)\ ,\qquad \eta \in \Delta(V)\ ,\ f : \Delta(V)\to \R\ ,
\end{align}
where $\eta^{xy}$ is defined as
\begin{align}\label{not:exy}
	\eta^{xy}(z):=\begin{dcases}
		\eta(z)&\text{if}\ z\neq x,y\\
		\frac{\eta(x)+\eta(y)}{2}&\text{otherwise}\ .
	\end{dcases}
\end{align}
When starting from a configuration $\xi \in \Delta(V)$, $\Prob_\xi$ and $\E_\xi$ denote the corresponding law and expectation, respectively.
The uniform measure $\pi\equiv 1/n$ is a fixed point of the dynamics, i.e., $\cL f(\pi)=0$ for all $f:\Delta(V)\to\R$. 

The process ${\rm RW}(G)$ is the continuous-time Markov chain $(X_t)_{t\ge0}$, with state space $V$, with generator
\begin{equation}\label{eq:generator-rw}
	\Lrw\psi(x)=\frac{1}{2}\sum_{y: \,xy\in E}\big(\psi(y)-\psi(x)\big)\ ,\qquad x\in V\ ,\  \psi:V\to \R\  .
\end{equation}
That is, $X_t$ is the simple random walk on $G$ with jump rate $1/2$. We will refer to it as \emph{lazy random walk} (or, simply, \emph{random walk}) on $G$, and call $\Prw_\xi$ the corresponding  law when starting from a distribution $\xi \in \Delta(V)$; when $\xi=\IND_x$, we simply write $\Prw_x$. Clearly, $\pi\equiv 1/n$ is the unique equilibrium distribution for ${\rm RW}(G)$. Further, we let $\lambda_{\rm gap}$ denote its \textit{spectral gap}, and $t_{\rm rel}$ its \textit{relaxation time}: $\lambda_{\rm gap}>0$ is the smallest non-zero eigenvalue of $-\Lrw$, and 
\begin{equation}\label{eq:def-trel}
	t_{\rm rel}=(\lambda_{\rm gap})^{-1}\ .
\end{equation}

The first observation is that, for all $x \in V$, $t\ge 0$ and $\xi \in \Delta(V)$,
\begin{equation}\label{eq:duality-bin1}
\E_\xi\left[\eta_t(x) \right]=\Prw_\xi(X_t=x)=:\pi_t^\xi(x)\ ,
\end{equation}
which equivalently rewrites, for some  collection of random variables  $Z_t^\xi=(Z_t^\xi(x))_{x\in V}$, as
\begin{equation}\label{eq:duality-bin1-v2}
  \eta_t=\pi^\xi_t+ Z_t^\xi\ ,\qquad \text{with}\ \eta_0=\xi\ ,\  
		\sum_{x\in V} Z_t^\xi(x)=0\ ,\ 
		\E_\xi[Z_t^\xi(x)]=0\ .
\end{equation}
In other words, the probability mass function at time $t$ is given by the deterministic random walk kernel $\pi^\xi_t$ plus a mean-zero noise term $Z_t^\xi$. The  identity in \eqref{eq:duality-bin1}, as emphasized in \cite{aldous_lecture_2012}, is an instance of \emph{duality}; see also \cite{liggett_interacting_2005-1}. An intuitive explanation of \eqref{eq:duality-bin1} is that, under repeated averages on $G$, any given \textquotedblleft infinitesimal chunk\textquotedblright\ of the initial mass performs a (lazy) random walk on $G$. 
As a first  immediate consequence of \eqref{eq:duality-bin1}--\eqref{eq:duality-bin1-v2}, one has the following general lower bound, stating that the presence of the noise term $Z_t^\xi$ cannot speed up  convergence to equilibrium.
\begin{proposition}[{\cite[Prop.\ 5.1]{quattropani2021mixing}}]\label{prop:general-lb}
	For all $p\in[1,\infty)$, $t\ge 0$, and  $\xi\in\Delta(V)$,
	\begin{equation}\label{eq:lower-bound-general}
		\E_\xi\left[\left\|\frac{\eta_t}{\pi}-1\right\|^p_p \right]\ge \bigg\|\frac{\pi_t^\xi}{\pi}-1\bigg\|^p_p\ .
	\end{equation}
\end{proposition}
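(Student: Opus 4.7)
The plan is to obtain \eqref{eq:lower-bound-general} from the mean-zero decomposition \eqref{eq:duality-bin1-v2} via a single pointwise application of Jensen's inequality, followed by an integration against $\pi$.

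First I would unpack the $L^p(\pi)$-norm as
\[
\left\|\frac{\eta_t}{\pi}-1\right\|_p^p=\sum_{x\in V}\pi(x)\left|\frac{\eta_t(x)}{\pi(x)}-1\right|^p,
\]
and use linearity of expectation (the sum over $V$ is finite) to reduce the claim to the pointwise bound
\[
\E_\xi\left[\left|\frac{\eta_t(x)}{\pi(x)}-1\right|^p\right]\ge \left|\frac{\pi_t^\xi(x)}{\pi(x)}-1\right|^p,\qquad x\in V.
\]

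Next I would invoke \eqref{eq:duality-bin1-v2}: writing $\eta_t(x)-\pi(x)=(\pi_t^\xi(x)-\pi(x))+Z_t^\xi(x)$ with $\E_\xi[Z_t^\xi(x)]=0$, the $\Prob_\xi$-mean of $\eta_t(x)/\pi(x)-1$ equals precisely $\pi_t^\xi(x)/\pi(x)-1$. Since $u\mapsto |u|^p$ is convex on $\R$ for all $p\in[1,\infty)$, Jensen's inequality yields the desired pointwise estimate, and multiplying by $\pi(x)$ and summing over $x\in V$ reassembles the $L^p(\pi)$-norm on the right-hand side, concluding the argument.

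There is no genuine obstacle: the proof is entirely soft and uses only the first-moment identity \eqref{eq:duality-bin1} (equivalently, the mean-zero property of $Z_t^\xi$) together with convexity of $|\cdot|^p$. In particular, the same reasoning would apply verbatim to any $\Delta(V)$-valued process whose one-time marginals coincide with $\pi_t^\xi$, independently of the details of the dynamics.
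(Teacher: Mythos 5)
Your proof is correct, but it takes a genuinely different route from the paper's. You work pointwise: for each fixed $x\in V$ you apply Jensen's inequality to the convex map $u\mapsto|u|^p$ and the random variable $\eta_t(x)/\pi(x)-1$, using only the first-moment identity $\E_\xi[\eta_t(x)]=\pi_t^\xi(x)$, and then sum against $\pi$. The paper instead proves the unsquared inequality
\begin{equation}
\E_\xi\left[\left\|\tfrac{\eta_t}{\pi}-1\right\|_p\right]\ge \left\|\tfrac{\pi_t^\xi}{\pi}-1\right\|_p
\end{equation}
by invoking the $L^p$--$L^q$ duality $\|f\|_p=\sup_{\|\psi\|_q=1}\sum_x f(x)\psi(x)$, interchanging $\E_\xi$ with the supremum (which gives an inequality), using linearity and \eqref{eq:duality-bin1}, and only afterwards upgrades to the $p$-th power via a second, scalar Jensen step on $u\mapsto u^p$. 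Your argument is more elementary and self-contained: a single application of convexity at each site, no appeal to the dual representation of the norm. The paper's argument is slightly more structural --- it isolates the norm-level contraction as an intermediate statement, which is the form one would want if one cared about $\E_\xi\|\cdot\|_p$ rather than $\E_\xi\|\cdot\|_p^p$, and it makes transparent that the inequality is really a consequence of $\eta\mapsto\E_\xi[\eta_t\,|\,\eta_0=\eta]$ being an averaging operator. Both give exactly the stated bound, and your observation that the reasoning applies to \emph{any} $\Delta(V)$-valued process with the same one-time marginals is accurate and matches the spirit of the remark following the proposition.
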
 
\begin{proof}
	By using the dual formulation of the $L^p$-distance, and calling $q$ the conjugate exponent of $p$, we get 
	\begin{align*}
		\E_\xi\left[\left\|\frac{\eta_t}{\pi}-1\right\|_p\right]&=	\E_\xi\left[\sup_{\|\psi\|_q=1}\sum_{x\in V}\left(\eta_t(x)-\pi(x)\right)\psi(x) \right]\\
		&\ge\sup_{\|\psi\|_q=1}\sum_{x\in V}\E_\xi\left[\left(\eta_t(x)-\pi(x)\right)\psi(x) \right]\\
 &=\sup_{\|\psi\|_q=1}\sum_{x\in V}\left(\pi^\xi_t(x)-\pi(x)\right)\psi(x) =\bigg\|\frac{\pi_t^\xi}{\pi}-1\bigg\|_p\ ,
	\end{align*}
where for the second identity we used \eqref{eq:duality-bin1}.
The  result follows by Jensen inequality.
\end{proof}
\begin{remark}[Sharpness of \eqref{eq:lower-bound-general}] As we will comment more in detail in Section \ref{sec:comparison} below, for certain families of graphs the lower bound in  \eqref{eq:lower-bound-general} is asymptotically optimal (e.g., for the hypercube), for others it is sub-optimal (e.g., for the $L^1$-distance on the complete graph, see \cite{chatterjee2020phase}).	
\end{remark}
The lower bound in Proposition \ref{prop:general-lb} is not the only  relation that one can derive between the $L^p$-distance of ${\rm Avg}(G)$ and that of ${\rm RW}(G)$. As first noticed in \cite{aldous_lecture_2012}, the infinitesimal mean contraction of the $L^2$-distance for ${\rm Avg}(G)$ can be expressed in terms of the Dirichlet form of ${\rm RW}(G)$, giving rise to a version of  Poincar\'e inequality for ${\rm Avg}(G)$. More precisely, recalling the definitions of infinitesimal generators $\cL$ and $\Lrw$ in \eqref{eq:generator-avg} and \eqref{eq:generator-rw}, respectively, 
and letting $\cErw$ denote the Dirichlet form for ${\rm RW}(G)$
\begin{align}
	\cErw(\psi)
	=\frac{1}{2n}\sum_{xy\in E}\big(\psi(x)-\psi(y)\big)^2\ ,\qquad \psi:V\to\R\ ,
\end{align}
a simple computation yields 
\begin{align}
	\frac{\dd}{\dd t}\,\E_\xi\left[\left\|\frac{\eta_t}{\pi}-1\right\|_2^2 \right]=\cL\, \E_\xi\left[\left\|\frac{\eta_t}{\pi}-1\right\|_2^2 \right]=- \E_\xi\left[\cErw\left(\frac{\eta_t}{\pi}\right)\right]\ .
\end{align}
As a consequence of this,  the variational formulation of the spectral gap,  Gr\"onwall inequality, and the Markov property of ${\rm Avg}(G)$ yield the following result.
\begin{proposition}[{\cite[Prop.\ 2]{aldous_lecture_2012}}]\label{prop:AL} Letting  $t_{\rm rel}$ be defined as in \eqref{eq:def-trel},
	\begin{equation}\label{eq:AL}
		\E_\xi\left[\left\|\frac{\eta_{t+s}}{\pi}-1 \right\|_2^2 \right]\le e^{-\frac{s}{t_{\rm rel}}}\, \E_\xi\left[ \left\|\frac{\eta_t}{\pi}-1 \right\|_2^2\right]\ ,\qquad \xi\in\Delta(V)\ ,\ s,t\ge 0\ .
	\end{equation}
\end{proposition}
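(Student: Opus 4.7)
The plan is to reduce the contraction statement to an application of the Poincaré inequality for ${\rm RW}(G)$ combined with Grönwall's inequality, and then to promote the resulting bound from time $t=0$ to arbitrary starting time via the Markov property. The key observation driving the argument is the identity displayed just above the statement,
\begin{equation}
\frac{\dd}{\dd t}\,\E_\xi\!\left[\left\|\tfrac{\eta_t}{\pi}-1\right\|_2^2\right]=-\,\E_\xi\!\left[\cErw\!\left(\tfrac{\eta_t}{\pi}\right)\right],
\end{equation}
which already translates the $L^2$-dissipation for ${\rm Avg}(G)$ into the Dirichlet form of ${\rm RW}(G)$ evaluated along the trajectory.

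The first step is to note that for every realization of the process, $\eta_t$ remains a probability measure on $V$, so $\pi\!\left(\tfrac{\eta_t}{\pi}\right)=\sum_{x\in V}\eta_t(x)=1$. Hence the function $\psi=\eta_t/\pi-1$ has zero $\pi$-mean, and the variational characterization of the spectral gap gives
\begin{equation}
\cErw\!\left(\tfrac{\eta_t}{\pi}\right)\ \geq\ \lambda_{\rm gap}\left\|\tfrac{\eta_t}{\pi}-1\right\|_2^2
\end{equation}
pointwise in $\omega$. Taking $\E_\xi$ and inserting into the differential identity yields
\begin{equation}
\frac{\dd}{\dd t}\,u(t)\ \leq\ -\lambda_{\rm gap}\, u(t),\qquad u(t):=\E_\xi\!\left[\left\|\tfrac{\eta_t}{\pi}-1\right\|_2^2\right],
\end{equation}
and Grönwall's inequality then gives $u(s)\leq e^{-s/t_{\rm rel}}\,u(0)$ for every $\xi\in\Delta(V)$ and $s\geq 0$.

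Finally, the statement for arbitrary $t\geq 0$ follows by conditioning on $\cF_t$ and using the Markov property of ${\rm Avg}(G)$: conditionally on $\eta_t$, the shifted process $(\eta_{t+s})_{s\geq 0}$ is a copy of ${\rm Avg}(G)$ started from $\eta_t$, so the already-proved inequality applied with initial configuration $\eta_t$ gives
\begin{equation}
\E_\xi\!\left[\left\|\tfrac{\eta_{t+s}}{\pi}-1\right\|_2^2\,\Big|\,\cF_t\right]\ \leq\ e^{-s/t_{\rm rel}}\left\|\tfrac{\eta_t}{\pi}-1\right\|_2^2,
\end{equation}
and taking $\E_\xi$ of both sides yields the claim. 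There is no real obstacle here; the only point that deserves care is the pointwise applicability of the Poincaré inequality, which rests on the almost-sure conservation $\sum_x\eta_t(x)=1$ of the Averaging dynamics.
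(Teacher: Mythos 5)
Your proof is correct and follows exactly the route the paper sketches: the differential identity is converted into a differential inequality via the variational characterization of $\lambda_{\rm gap}$ applied pointwise to the $\pi$-mean-zero function $\eta_t/\pi-1$, Grönwall gives the decay from time $0$, and the Markov property lifts it to arbitrary $t$. The one point you singled out as deserving care---that $\sum_x\eta_t(x)=1$ almost surely, so the Poincaré inequality applies pointwise in $\omega$---is indeed exactly the observation that makes the argument go through.
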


\begin{remark}[Sharpness of \eqref{eq:AL}]
Compared to the more common Poincar\'e inequality for ${\rm RW}(G)$, the inequality in Proposition \ref{prop:AL} comes with a missing factor $2$ in the exponential contraction rate. In general, such an inequality cannot be improved, with the equality being attained when, for instance, $G$ is the complete graph (see \cite{chatterjee2020phase}).
\end{remark}
\subsection{Second moments and coupled random walks }
As already pointed out in \cite{aldous_lecture_2012}, also the second moments of $\eta_t$ admit a formulation in the spirit of  \eqref{eq:duality-bin1}, at the cost of replacing ${\rm RW}(G)$ by a 
suitable coupling of two random walks. 
 For a given graph $G=(V,E)$, we call
 ${\rm CRW}(G)$ the Markov process $(X_t,Y_t)_{t\ge 0}$ defined as follows. The state space is $V\times V$, interpreted as the set of positions of two labeled particles; as in ${\rm Avg}(G)$ we have independent rate 1 Poisson clocks at the edges of $G$; when an edge $xy$ rings, each particle on the endpoints of that edge is placed independently at $x$ or at $y$ with probability $1/2$. 
We note that as long as $X_t$ and $Y_t$ are at (graph) distance strictly larger than $1$, they evolve as two independent ${\rm RW}(G)$--processes, whereas while $X_t$ and $Y_t$ sit on the endpoints of the same edge (on the same vertex or on opposite vertices), they can experience synchronous jumps.

An argument analogous to that used for \eqref{eq:duality-bin1}, involving this time the evolution of two infinitesimal portions of masses of ${\rm Avg}(G)$, should convince the reader that the following identity holds: for all $x, y \in V$, $t\ge0$, and $\xi \in \Delta(V)$,
\begin{equation}\label{eq:duality-bin2}
	\E_\xi\left[\eta_t(x)\eta_t(y) \right]=\Purw_{\xi\otimes\xi}(X_t=x,Y_t=y)\ ,
\end{equation}
with $\Purw_{\xi\otimes \xi}$ denoting the law of ${\rm CRW}(G)$ when starting from $\xi\otimes \xi$ (two independent samples from $\xi$). By a generalization of the above two-particle process that goes under the name of {\em Binomial splitting process}, 
relations such as \eqref{eq:duality-bin2} extend to higher--order moments, see \cite{quattropani2021mixing}. 
	As the next proposition shows, \eqref{eq:duality-bin2} yields two convenient alternative formulations of the $L^2$-distance to equilibrium of ${\rm Avg}(G)$ in terms of the transition probabilities of ${\rm CRW}(G)$.
\begin{proposition}[\cite{aldous_lecture_2012,quattropani2021mixing}] \label{pr:expressions-L2-norm} For all $t\ge 0$ and $\xi\in \Delta(V)$,
	\begin{align}\label{eq:second-moments}
		\E_\xi\left[\left\|\frac{\eta_t}{\pi}-1  \right\|_2^2 \right]=n\,\Purw_{\xi\otimes \xi}(X_t=Y_t)-1
	\end{align}
and
\begin{equation}\label{eq:second-moments2}
	\E_\xi\left[\left\|\frac{\eta_t}{\pi}-1  \right\|_2^2 \right]	=\bigg\|\frac{\pi_t^\xi}{\pi}-1\bigg\|_2^2+\mathcal N^\xi_t\ ,
\end{equation}
where
\begin{equation}\label{eq:noise-term}
	\mathcal N^\xi_t:=\frac{n}{2}\int_{0}^{t}\sum_{xy\in E}\left(\pi^\xi_s(x)- \pi^\xi_s(y)\right)^2\Phi_{t-s}(x,y)\,\dd s\ ,
\end{equation}
and
\begin{equation}\label{eq:Phi-general}
		\Phi_{t}(x,y):=\frac{1}{2}\left(\Purw_{x,x}\left(X_t=Y_t \right)+\Purw_{y,y}\left(X_t=Y_t \right)-2\,\Purw_{x,y}\left(X_t=Y_t \right)\right)\in [0,1]\ .
\end{equation}
\end{proposition}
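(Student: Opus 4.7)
My plan is to prove \eqref{eq:second-moments} as a direct consequence of the two-particle duality \eqref{eq:duality-bin2}, and then to derive \eqref{eq:second-moments2} via a variation-of-constants (Duhamel) expansion, comparing ${\rm CRW}(G)$ with two \emph{independent} copies of ${\rm RW}(G)$.

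For \eqref{eq:second-moments}, expanding the squared $L^2(\pi)$-norm with $\pi\equiv 1/n$ and using $\sum_x \eta_t(x)=1$ gives $\|\eta_t/\pi-1\|_2^2=n\sum_x \eta_t(x)^2-1$. Taking expectations and invoking \eqref{eq:duality-bin2} at $x=y$ shows $\sum_x\E_\xi[\eta_t(x)^2]=\Purw_{\xi\otimes\xi}(X_t=Y_t)$, which proves \eqref{eq:second-moments}.

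For \eqref{eq:second-moments2}, let $\tilde\cL$ and $\tilde P_t$ denote the generator and semigroup of two independent copies of ${\rm RW}(G)$, and set $f(x,y):=\IND(x=y)$. Under two independent walks both starting from $\xi$, $(X_t,Y_t)\sim \pi_t^\xi\otimes\pi_t^\xi$, so $\sum_{x_0,y_0}\xi(x_0)\xi(y_0)\,\tilde P_s f(x_0,y_0)=\sum_x\pi_s^\xi(x)^2$, while the corresponding quantity under $P_t^{\scriptscriptstyle\rm CRW}$ equals $\Purw_{\xi\otimes\xi}(X_t=Y_t)$. The crucial observation is that $\cL^{\scriptscriptstyle\rm CRW}$ and $\tilde\cL$ differ only when both walkers occupy the endpoints of a common edge, and a short edge-by-edge case analysis yields, for any symmetric $g:V\times V\to\R$,
\begin{equation*}
(\cL^{\scriptscriptstyle\rm CRW}-\tilde\cL)g(x,y)=
\begin{dcases}
\tfrac12\sum_{v:\,xv\in E}\Phi_g(x,v)&\text{if }x=y,\\
-\tfrac12\Phi_g(x,y)&\text{if }x\neq y \text{ and } xy\in E,\\
0&\text{otherwise},
\end{dcases}
\end{equation*}
with $\Phi_g(u,v):=\tfrac12[g(u,u)+g(v,v)-2g(u,v)]$. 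The Duhamel formula $P_t^{\scriptscriptstyle\rm CRW}f-\tilde P_t f=\int_0^t\tilde P_s\,(\cL^{\scriptscriptstyle\rm CRW}-\tilde\cL)\,P_{t-s}^{\scriptscriptstyle\rm CRW}f\,\dd s$, applied with $g:=P_{t-s}^{\scriptscriptstyle\rm CRW}f$ so that $\Phi_g=\Phi_{t-s}$ in the sense of \eqref{eq:Phi-general}, then produces---after integrating against $\pi_s^\xi\otimes\pi_s^\xi$ and regrouping the $x=y$ and $x\neq y$ contributions around each edge via the identity $\tfrac12[\pi_s^\xi(x)^2+\pi_s^\xi(y)^2]-\pi_s^\xi(x)\pi_s^\xi(y)=\tfrac12(\pi_s^\xi(x)-\pi_s^\xi(y))^2$---the formula
\begin{equation*}
\Purw_{\xi\otimes\xi}(X_t=Y_t)-\sum_x\pi_t^\xi(x)^2=\tfrac{1}{2}\int_0^t\sum_{xy\in E}\big(\pi_s^\xi(x)-\pi_s^\xi(y)\big)^2\,\Phi_{t-s}(x,y)\,\dd s.
\end{equation*}
Multiplying by $n$ and combining with $n\sum_x\pi_t^\xi(x)^2=\|\pi_t^\xi/\pi-1\|_2^2+1$ and \eqref{eq:second-moments} then produces \eqref{eq:second-moments2}.

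The main obstacle is the algebraic bookkeeping of the generator difference $\cL^{\scriptscriptstyle\rm CRW}-\tilde\cL$: one has to verify that it is supported exactly on the edge configurations above, and that upon integration against $\pi_s^\xi\otimes\pi_s^\xi$ the diagonal ($x=y$) and off-diagonal ($x\neq y$) contributions recombine, via the elementary identity displayed above, into the perfect square $(\pi_s^\xi(x)-\pi_s^\xi(y))^2$ weighted by $\Phi_{t-s}$. Once this identity is in hand, the rest is a routine application of Duhamel together with the first identity.
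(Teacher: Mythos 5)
Your proof is correct and takes essentially the same route the paper indicates: \eqref{eq:second-moments} by expanding the squared $L^2(\pi)$-norm and invoking the two-particle duality \eqref{eq:duality-bin2}, and \eqref{eq:second-moments2} by a Duhamel (variation-of-constants) comparison of the ${\rm CRW}(G)$ generator against that of two independent ${\rm RW}(G)$'s, which is exactly the ``explicit comparison of generators'' the paper defers to \cite[Prop.\ 5.5]{quattropani2021mixing}. The one small omission is that your argument does not address the assertion $\Phi_t(x,y)\in[0,1]$ appearing in \eqref{eq:Phi-general} --- the upper bound is immediate, but nonnegativity requires a separate observation --- though the paper's own text does not supply that either.
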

	The  identity in \eqref{eq:second-moments} follows at once  by expanding the $L^2$-norm and using \eqref{eq:duality-bin2}. The  identity in \eqref{eq:second-moments2} is slightly more involved, and can be obtained by an explicit comparison of the generator of the ${\rm CRW}(G)$--process and that of two independent ${\rm RW}(G)$--processes; we refer to  \cite[Prop. 5.5]{quattropani2021mixing} for the details.

\subsection{Worst--case initial condition}
In the literature on mixing times of Markov chains,  the distance to equilibrium at some time $t$ is most often analyzed by starting the chain from a \emph{worst--case initial condition}, namely one which maximizes the distance at time $t$. By a straightforward convexity argument, when the state space is finite,  worst--case initial conditions $\xi \in \Delta(V)$ are always Dirac distributions on some element of the chain's state space. An analogous argument, based on the \emph{linearity} of ${\rm Avg}(G)$, shows that the worst--case $L^p$ distance for ${\rm Avg}(G)$ is achieved at Dirac masses $\IND_x$, $x\in V$, for all $p\geq 1$.
A version of this fact already appeared in \cite{movassagh_repeated2022} for the case of the $L^1$-distance in the discrete-time model. The same  idea was also used in \cite{chatterjee2020phase} in the context of the complete graph, and in \cite{quattropani2021mixing} to estimate the $L^1$-distance of the Binomial splitting process  
on finite-dimensional graphs. The simple proof given here covers every graph and  all $L^p$-distances with $p\ge 1$.
\begin{proposition}\label{prop:worst-case}
	For any graph $G$, for all $t\ge 0$ and  $p\in [1,\infty)$,
	\begin{equation}
		\sup_{\xi \in \Delta(V)}\E_\xi\left[\left\|\frac{\eta_t}{\pi}-1 \right\|_{p}^p \right] = \sup_{x \in V}\E_{\IND_x}\left[\left\|\frac{\eta_t}{\pi}-1 \right\|_{p}^p \right]\ .
	\end{equation}
\end{proposition}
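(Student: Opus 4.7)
The plan is to exploit the linearity of $\mathrm{Avg}(G)$, together with convexity of the $L^p$-norm. First I would realize all the processes $(\eta^{(x)}_t)_{t\ge 0}$, started from the Dirac masses $\IND_x$ for $x\in V$, on a single probability space, coupled via the \emph{same} realization of the Poisson clocks on $E$ and the same (here trivial) update rule. Since each update at a ringing edge $xy$ acts on the mass vector as the linear map replacing the $x$- and $y$-coordinates by their half-sum, under this coupling the map $\xi\mapsto \eta^\xi_t$ is almost-surely linear. Hence for any $\xi=\sum_{x\in V}\xi(x)\IND_x\in\Delta(V)$ the pathwise identity
\begin{equation*}
\eta_t^\xi\;=\;\sum_{x\in V}\xi(x)\,\eta_t^{(x)}
\end{equation*}
holds, and since $\pi=\sum_{x}\xi(x)\pi$, we get $\frac{\eta^\xi_t}{\pi}-1=\sum_{x}\xi(x)\bigl(\frac{\eta_t^{(x)}}{\pi}-1\bigr)$.

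Next I would apply the triangle inequality for $\|\cdot\|_p$ followed by Jensen's inequality against the convex function $t\mapsto t^p$ with respect to the probability measure $\xi$ on $V$, obtaining the pointwise (in $\omega$) bound
\begin{equation*}
\left\|\frac{\eta^\xi_t}{\pi}-1\right\|_p^p
\;\le\;\left(\sum_{x\in V}\xi(x)\left\|\frac{\eta^{(x)}_t}{\pi}-1\right\|_p\right)^{\!p}
\;\le\;\sum_{x\in V}\xi(x)\left\|\frac{\eta^{(x)}_t}{\pi}-1\right\|_p^p.
\end{equation*}
Taking expectations under the joint coupling and using that the marginal law of $\eta^{(x)}$ coincides with $\Prob_{\IND_x}$, I get
\begin{equation*}
\E_\xi\!\left[\left\|\frac{\eta_t}{\pi}-1\right\|_p^p\right]
\;\le\;\sum_{x\in V}\xi(x)\,\E_{\IND_x}\!\left[\left\|\frac{\eta_t}{\pi}-1\right\|_p^p\right]
\;\le\;\sup_{x\in V}\E_{\IND_x}\!\left[\left\|\frac{\eta_t}{\pi}-1\right\|_p^p\right].
\end{equation*}
Taking the supremum over $\xi\in\Delta(V)$ on the left gives the nontrivial inequality; the reverse one is immediate since $\IND_x\in\Delta(V)$ for every $x\in V$.

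There is no real obstacle here: the argument is essentially a one-line consequence of the linearity of the update rule, and the only care needed is in setting up the common coupling so that the pathwise linear decomposition $\eta_t^\xi=\sum_x\xi(x)\eta_t^{(x)}$ is meaningful, and in applying Jensen's inequality in the correct direction (with $t\mapsto t^p$ convex for $p\ge 1$). Note that the same proof works verbatim for any $p\in[1,\infty)$, and in fact for any norm in place of $\|\cdot\|_p$; the restriction to $p\ge 1$ enters only through the convexity of $t\mapsto t^p$ and the triangle inequality.
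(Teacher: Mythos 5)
Your proof is correct and takes essentially the same approach as the paper: both exploit the pathwise linearity of the coupled evolutions $\eta_t^\xi=\sum_x\xi(x)\eta_t^{(x)}$ and then conclude by convexity. The only cosmetic difference is that you apply Minkowski followed by Jensen on the scalars $\|\cdot\|_p$, whereas the paper applies Jensen pointwise at each vertex $x$ before summing against $\pi$; the two routes give the same intermediate bound.
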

\begin{proof}
	The inequality \textquoteleft $\ge$\textquoteright\  is obvious. The proof of the inverse inequality follows by observing that, when the initial condition is $\xi\in \Delta(V)$, the probability mass function at time $t$ can be represented as 
	\begin{equation}
		\eta_t = \sum_{y\in V} \xi(y)\,  \eta^{(y)}_t\ ,\qquad  t \ge 0\ ,
	\end{equation}
	where, for each $y\in V$, $\eta^{(y)}_t$ is the averaging process on $G$ with initial condition $\IND_y$, and the evolutions $(\eta^{(y)}_t, \, t\geq 0)_{y\in V}$
are coupled by the use of the same edge update sequence. 
	Therefore, letting $\E$ denote the expectation with respect to this coupling,  
by Jensen inequality, 
	\begin{align}
		\E_\xi\left[\left\| \frac{\eta_t}{\pi}-1\right\|_{p}^p\right]
		&=\sum_{x\in V}\pi(x)\,\E\left[\bigg|\sum_{y\in V}\xi(y)\bigg(\frac{\eta^{(y)}_t(x)}{\pi(x)}-1\bigg) \bigg|^p \right]\\
		&\le \sum_{x\in V}\pi(x)  \sum_{y\in V}\xi(y)\, \E\left[\bigg|\frac{\eta^{(y)}_t(x)}{\pi(x)}-1 \bigg|^p \right]
		   = \sum_{y\in V}\xi(y)\,\E_{\IND_y}\left[\left\|\frac{\eta_t}{\pi}-1 \right\|_{p}^p \right]\ .	
	\end{align}
	This concludes the proof.
\end{proof}
\subsection{Relative entropy decay}
Relative entropy is  commonly used 
to measure the distance from stationarity, and it is natural to relate the mixing properties of a system to its exponential decay rate. 
In the usual Markov chain setup, this leads to the so-called Log-Sobolev and modified Log-Sobolev inequalities \cite{diaconis1996logarithmic,bobkov2006modified}. These inequalities control the global entropy functional in terms of a local Dirichlet form. In the setting of the averaging process, we are led to a new set of inequalities where the global entropy functional is controlled by local versions of the entropy itself rather than by a Dirichlet form.  Interestingly, these new inequalities were recently introduced and studied by \cite{bristiel_caputo_entropy_2021}  as a natural factorization statement for various random walk models, see also \cite{caputo_parisi_block_2021} for a spin system setting. However, the striking connection to the Averaging process that we point out here appears to be new. The latter can be formalized as follows. 

Let 
 \begin{align}
D(\eta||\pi):=\sum_{x\in V}\eta(x)\log\frac{\eta(x)}{\pi(x)}
 \ ,\qquad  \eta \in \Delta(V)\ ,
 \end{align}
denote  the \textit{relative entropy} (or \textit{Kullback--Leibler divergence}) of $\eta\in\Delta(V)$ with respect to $\pi\equiv 1/|V|$, and consider its expected value $\E_\xi \left[D(\eta_t||\pi)\right]$ under the Averaging process
 with initial condition $\xi\in\Delta(V)$.
 Recalling that $n=|V|$, the \emph{local entropy} of $\eta\in\Delta(V)$ at the edge $xy\in E$ is defined as 
 \begin{align}
{\rm ent}_{xy}(\eta):=
\frac{n}2\,\eta(x)\log\frac{\eta(x)}{\frac12(\eta(x)+\eta(y))}
+\frac{n}2\,\eta(y)\log\frac{\eta(y)}{\frac12(\eta(x)+\eta(y))}\ . \end{align}
\begin{proposition}\label{pr:entropy}
	The largest constant $\k\geq 0$ satisfying, for all $\xi\in\Delta(V)$ and $t\geq0$,
	 \begin{align}\label{eq:top}
\E_\xi\left[ D(\eta_t||\pi)\right]\leq e^{-\k\,t} D(\xi||\pi)\ ,
\end{align}
coincides with the largest constant $\k\geq 0$ satisfying, for all $\eta\in\Delta(V)$, 
	 \begin{align}\label{eq:top2}
\k\,	 D(\eta||\pi)\leq  \frac2n\sum_{xy\in E} {\rm ent}_{xy}(\eta)\ .
\end{align}
 \end{proposition}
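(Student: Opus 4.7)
The plan is to follow the standard Markov-semigroup route: compute the action of the generator $\cL$ on the relative entropy $H(\eta):=D(\eta||\pi)$, show that it equals (up to sign) the sum of local entropies, and then combine a Gr\"onwall argument in one direction with differentiation at $t=0$ in the other.

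The crucial computation is that, since $\pi\equiv 1/n$ is uniform, $H(\eta)=\log n +\sum_z \eta(z)\log\eta(z)$, and for any edge $xy\in E$ the update $\eta\mapsto \eta^{xy}$ modifies only the coordinates at $x$ and $y$, both becoming $\tfrac12(\eta(x)+\eta(y))$. A direct calculation then gives
\begin{align}
H(\eta)-H(\eta^{xy})=\eta(x)\log\frac{\eta(x)}{\tfrac12(\eta(x)+\eta(y))}+\eta(y)\log\frac{\eta(y)}{\tfrac12(\eta(x)+\eta(y))}=\tfrac{2}{n}\,{\rm ent}_{xy}(\eta)\ .
\end{align}
Summing over $xy\in E$ yields the master identity
\begin{align}
\cL H(\eta)=-\tfrac{2}{n}\sum_{xy\in E}{\rm ent}_{xy}(\eta)\ .
\end{align}

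Given this identity, both implications are straightforward. For \eqref{eq:top2}$\Rightarrow$\eqref{eq:top}, I would set $F(t):=\E_\xi[H(\eta_t)]$; since $H$ is continuous and bounded by $\log n$ on $\Delta(V)$, Kolmogorov's forward equation yields $F'(t)=\E_\xi[\cL H(\eta_t)]$. Applying \eqref{eq:top2} pointwise at $\eta=\eta_t$ and taking expectation produces $F'(t)\leq -\k F(t)$, and Gr\"onwall integrates this into \eqref{eq:top}. For the converse, I would fix an arbitrary $\eta\in\Delta(V)$, take $\xi=\eta$ as the initial condition, and differentiate \eqref{eq:top} at $t=0$: the bound $F(t)\leq e^{-\k t}F(0)$ forces $F'(0)\leq -\k F(0)$, and substituting $F'(0)=\cL H(\eta)=-\tfrac2n\sum_{xy}{\rm ent}_{xy}(\eta)$ together with $F(0)=D(\eta||\pi)$ recovers \eqref{eq:top2}. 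Since the two inequalities admit the same set of constants $\k\geq 0$, the two suprema coincide.

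There is no substantive obstacle here; the only point requiring a moment's care is the exchange of derivative and expectation in $F'(t)=\E_\xi[\cL H(\eta_t)]$, which is routine for a continuous-time Markov jump process with finite total rate (here $|E|$) acting on a bounded observable, since almost surely only finitely many jumps occur in any bounded time interval.
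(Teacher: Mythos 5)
Your proof is correct and follows essentially the same route as the paper: both compute the one-step entropy drop $H(\eta)-H(\eta^{xy})=\tfrac2n\,{\rm ent}_{xy}(\eta)$, deduce the identity $\frac{\dd}{\dd t}\E_\xi[D(\eta_t\|\pi)]=-\tfrac2n\sum_{xy}\E_\xi[{\rm ent}_{xy}(\eta_t)]$, then close the forward implication with Gr\"onwall and the reverse one by differentiating at $t=0^+$. The only cosmetic difference is that you spell out Kolmogorov's forward equation and the interchange of derivative and expectation, which the paper leaves implicit.
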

 \begin{proof}
Recalling the notation \eqref{not:exy}, and observing that 
 \begin{equation}
\eta(x)\log\frac{\eta(x)}{\pi(x)}
+\eta(y)\log\frac{\eta(y)}{\pi(y)}
-\eta^{xy}(x)\log\frac{\eta^{xy}(x)}{\pi(x)}
-\eta^{xy}(y)\log\frac{\eta^{xy}(y)}{\pi(y)}=\frac{2}n\,{\rm ent}_{xy}(\eta)\ ,
 \end{equation}
we see that, as a result of an update at edge $xy$, the relative entropy of $\eta$ decreases from $D(\eta||\pi)$ to $D(\eta^{xy}||\pi)=D(\eta||\pi)-\frac{2}n\,{\rm ent}_{xy}(\eta)$. 
Therefore, 
 \begin{align}
	\frac{\dd}{\dd t}\E_\xi\left[ D(\eta_t||\pi)\right]
	=
	\cL\, 
\E_\xi\left[ D(\eta_t||\pi)\right]
=- \frac{2}n\,
\sum_{xy\in E} \E_\xi\left[{\rm ent}_{xy}(\eta_t)\right]\ .
\end{align}
By integrating over time, we see that \eqref{eq:top2} implies \eqref{eq:top}. On the other hand, if \eqref{eq:top} holds for all $t>0$, then subtracting $D(\xi||\pi)$ from both sides and taking the limit $t\to 0^+$ one obtains \eqref{eq:top2}. 
 \end{proof}
 Following \cite{bristiel_caputo_entropy_2021}, we call $\k=\k(G)$ the \emph{entropy constant} of the graph $G$. One can compute the value of $\k(G)$ for certain simple graphs. For instance, consider the trivial case where $G$ consists of a single edge $xy$. In this case $n=2$ and $D(\eta||\pi)={\rm ent}_{xy}(\eta)$ so that $\k=1$. Therefore, using the product structure one easily checks that the entropy constant of the hypercube $\{0,1\}^d$ is again $\k=1$ for all $d\geq1$. 
We refer to \cite{bristiel_caputo_entropy_2021} for other examples including random walks on hypergraphs and processes with more than one particle such as the Binomial splitting process. 

 Proposition \ref{pr:entropy} is the entropic analogue of the $L^2$-statement in Proposition \ref{prop:AL}.
 A linearization argument shows that the constant $\k=\k(G)$ in \eqref{eq:top} satisfies $\k(G)\leq 1/t_{\rm rel}(G)$ for any graph $G$.  
 Moreover, one can show (see \cite[Lem.\  2.1]{bristiel_caputo_entropy_2021}) that $\k(G)\geq 2\log(2)\b(G)$, for any $G$, where $\b(G)$ denotes the Log-Sobolev constant of $G$ defined as the largest constant $\b\geq 0$ satisfying, for all $\eta\in\Delta(V)$,
 \begin{align}\label{eq:logsob}
\b\,D(\eta||\pi)\leq 	\cErw(\sqrt{\eta/\pi})\ .
\end{align}
In particular, this implies that the entropic bound $\E_\xi\left[ D(\eta_t||\pi)\right]\leq e^{-\b\,t}D(\xi||\pi)$ proposed in   
\cite[Prop.\ 7]{aldous_lecture_2012} can be improved by a factor $2\log(2)$ in the exponential decay rate. 
Further relations of interest include upper bounds on $\kappa(G)$ in terms of both $\beta(G)$ and the Modified Log-Sobolev constant of $G$, see \cite{bristiel_caputo_entropy_2021} for more details.

Finally, Proposition \ref{pr:entropy} and Pinsker's inequality
\begin{align}\label{eq:pinsker}
	\left\|\frac{\eta}{\pi}-1\right\|_1\le \sqrt{2D(\eta||\pi)}\ ,\qquad \eta\in\Delta(V)\ ,
\end{align}
provide the following upper bound on $L^1$-distance in terms of the entropy constant $\k$.
\begin{corollary}\label{cor:entropy}
For all $\xi\in\Delta(V)$, and for all $t\geq0$
	 \begin{align}\label{eq:top3}
\E_\xi\left[\left\|\frac{\eta_t}{\pi}-1  \right\|_1 \right]
\leq e^{-\k\,t/2} \sqrt {2\log n},
\end{align}
where $\k=\k(G)$ denotes the entropy constant of $G$.
 \end{corollary}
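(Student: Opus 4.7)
The plan is to combine three ingredients in sequence: Pinsker's inequality, Jensen's inequality to move the expectation inside a square root, and the entropy decay from Proposition \ref{pr:entropy}, finishing with the elementary bound $D(\xi\|\pi)\le \log n$ coming from uniformity of $\pi$.

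First I would apply Pinsker's inequality \eqref{eq:pinsker} pointwise in $\omega$ (i.e., for each realization of $\eta_t$) to get
\begin{equation}
\E_\xi\left[\left\|\frac{\eta_t}{\pi}-1\right\|_1\right]\leq \E_\xi\left[\sqrt{2D(\eta_t\|\pi)}\right].
\end{equation}
Since $\sqrt{\cdot}$ is concave, Jensen's inequality pulls the expectation inside the square root, giving
\begin{equation}
\E_\xi\left[\left\|\frac{\eta_t}{\pi}-1\right\|_1\right]\leq \sqrt{2\,\E_\xi\!\left[D(\eta_t\|\pi)\right]}.
\end{equation}

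Next, I would invoke the entropic contraction \eqref{eq:top} from Proposition \ref{pr:entropy}, namely $\E_\xi[D(\eta_t\|\pi)]\leq e^{-\kappa t}D(\xi\|\pi)$, to obtain
\begin{equation}
\E_\xi\left[\left\|\frac{\eta_t}{\pi}-1\right\|_1\right]\leq e^{-\kappa t/2}\sqrt{2D(\xi\|\pi)}.
\end{equation}

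Finally, since $\pi\equiv 1/n$ is uniform, for any $\xi\in\Delta(V)$ we have
\begin{equation}
D(\xi\|\pi)=\sum_{x\in V}\xi(x)\log(n\,\xi(x))=\log n+\sum_{x\in V}\xi(x)\log\xi(x)\leq \log n,
\end{equation}
using that the Shannon entropy $-\sum_x \xi(x)\log\xi(x)\geq 0$. Substituting this bound yields \eqref{eq:top3}. There is no real obstacle here: the work has all been done in Proposition \ref{pr:entropy}, and the corollary is a clean three-line consequence. The only subtlety worth flagging is the use of Jensen for the concave function $\sqrt{\cdot}$, which is what allows us to move from the pointwise Pinsker estimate to an estimate on the expected $L^1$-distance.
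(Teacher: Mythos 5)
Your proof is correct and follows essentially the same route as the paper: Pinsker, then moving the expectation inside the square root (the paper phrases this as Cauchy--Schwarz, you use Jensen for the concave square root, which is the same estimate), then Proposition~\ref{pr:entropy}, and finally the elementary bound $D(\xi\|\pi)\le\log n$. The paper reaches that last bound by noting $D(\cdot\|\pi)$ is convex and hence maximized at a Dirac mass, while you use nonnegativity of Shannon entropy; these are equivalent observations.
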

 \begin{proof}
 From \eqref{eq:pinsker} and Cauchy-Schwarz inequality, one has 
	 \begin{align}\label{eq:top4}
\E_\xi\left[\left\|\frac{\eta_t}{\pi}-1  \right\|_1 \right]\leq\sqrt{2\,\E_\xi\,D(\eta_t||\pi)} 
\leq e^{-\k\,t/2} \sqrt{2D(\xi||\pi)}\ .
\end{align}
By convexity, $D(\xi||\pi)$ is maximized when $\xi$ is a Dirac mass at a vertex;  	therefore, since $\pi\equiv 1/n$, $D(\xi||\pi)\leq \log n$.
 \end{proof}

\subsection{A lower bound using relative entropy}
Concerning lower bounds, the following statement is the continuous-time version of a useful estimate established in the recent work \cite{movassagh_repeated2022}.
\begin{proposition}[{\cite[Th.\ 1]{movassagh_repeated2022}}]\label{prop:MSW}
Let
\begin{equation}\label{eq:lb-time-MSW}
 T_\varepsilon:=(1-\varepsilon)\frac{\log_2 n}{\langle {\rm deg}\rangle}\, ,\qquad \varepsilon \in (0,1)\,,
\end{equation}
where ${\rm deg}(x):=$ degree of $x\in V$ in the graph $G$, and $\langle {\rm deg}\rangle:= \frac{1}{n}\sum_x {\rm deg}(x)$.
Then, 
	\begin{equation}\label{eq:lb-MSW}
		\liminf_{n\to\infty}\sup_{\xi\in\Delta(V)} \E_{\xi}\left[\left\|\frac{\eta_{T_\varepsilon}}{\pi} -1\right\|_1 \right]\geq\varepsilon\ .
	\end{equation}
\end{proposition}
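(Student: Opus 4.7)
The plan is to lower-bound the supremum in \eqref{eq:lb-MSW} by exhibiting one Dirac initial distribution $\xi = \IND_{v^*}$ whose expected relative entropy $\E_{\IND_{v^*}}[D(\eta_{T_\varepsilon}\|\pi)]$ is still at least $\varepsilon\log n$, and then converting this entropy lower bound into an $L^1$ lower bound via a deterministic reverse-Pinsker-type inequality that exploits the uniformity of $\pi$.

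For the entropy bound, I would work with the Shannon entropy $H(\eta) = \log n - D(\eta\|\pi)$. A direct calculation, with $a = \eta(x)$, $b = \eta(y)$, $p = a/(a+b)$ and $H_2$ the binary entropy, yields the pathwise identity and bound
\begin{align}
H(\eta^{xy}) - H(\eta) = (a+b)\bigl(\log 2 - H_2(p)\bigr) \leq (a+b)\log 2.
\end{align}
Applying this to the generator $\cL$ in \eqref{eq:generator-avg} and using the duality identity \eqref{eq:duality-bin1},
\begin{align}
\frac{\dd}{\dd t}\E_{\IND_v}\bigl[H(\eta_t)\bigr]
\leq \log 2 \sum_{xy\in E}\bigl(\pi_t^v(x) + \pi_t^v(y)\bigr) = \log 2 \cdot \Erw_v\bigl[{\rm deg}(X_t)\bigr],
\end{align}
where $\pi_t^v(x):=\Prw_v(X_t=x)$. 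Integrating from $0$ to $T_\varepsilon$ (with $H(\IND_v)=0$) and averaging over $v$ with $\pi$-weights, reversibility of ${\rm RW}(G)$ gives $\sum_v\pi(v)\Erw_v[{\rm deg}(X_s)]=\langle{\rm deg}\rangle$, so at least one vertex $v^*$ satisfies
\begin{align}
\E_{\IND_{v^*}}\bigl[H(\eta_{T_\varepsilon})\bigr] \leq \log 2 \cdot \langle{\rm deg}\rangle T_\varepsilon = (1-\varepsilon)\log n.
\end{align}

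For the conversion step, I would use the following deterministic inequality, valid for every $\eta\in\Delta(V)$ and $c>0$: setting $B_c := \{x \in V : \eta(x) \geq e^{-c}\}$, (i) since $\eta$ is a probability measure $|B_c| \leq e^c$, so $\pi(B_c) \leq e^c/n$, and (ii) Markov's inequality applied to the non-negative random variable $-\log \eta(X)$ under $\eta$ (whose mean is $H(\eta)$) gives $\eta(B_c) \geq 1 - H(\eta)/c$. Therefore
\begin{align}
\|\eta - \pi\|_{TV} \geq \eta(B_c) - \pi(B_c) \geq 1 - \frac{H(\eta)}{c} - \frac{e^c}{n}.
\end{align}
Applying this pointwise inequality to $\eta = \eta_{T_\varepsilon}$, taking $\E_{\IND_{v^*}}$, choosing $c = \log n - C$ for an arbitrary fixed $C > 0$, and plugging in the entropy bound from the previous paragraph,
\begin{align}
\sup_\xi\E_\xi\!\left[\left\|\tfrac{\eta_{T_\varepsilon}}{\pi}-1\right\|_1\right] \geq 2\,\E_{\IND_{v^*}}\bigl[\|\eta_{T_\varepsilon}-\pi\|_{TV}\bigr] \geq 2\left(1 - \frac{(1-\varepsilon)\log n}{\log n - C} - e^{-C}\right).
\end{align}
Letting $n\to\infty$ and then $C\to\infty$ gives $\liminf \geq 2\varepsilon \geq \varepsilon$, as required.

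The main obstacle is the reverse-Pinsker step: the standard inequality \eqref{eq:pinsker} controls TV from above by $\sqrt{2D}$, and no lower bound of $\|\cdot\|_{TV}$ by a function of $D$ holds for general reference measures. What makes it work here is precisely that $\pi$ is uniform: a small $H(\eta)$ forces $\eta$ to concentrate on a set $B_c$ of cardinality at most $e^c$, which has automatically small $\pi$-measure, and concentration of $\eta$ on a small-$\pi$ set is exactly what TV distance witnesses. A minor technical point is checking that the pathwise bound $\Delta H \leq (a+b)\log 2$ aggregates, via \eqref{eq:duality-bin1} and reversibility, to the clean form $\log 2 \cdot \langle{\rm deg}\rangle$ which matches the time scale in \eqref{eq:lb-time-MSW}.
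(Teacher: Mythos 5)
Your proof is correct, and it differs from the paper's in two places, both of which you replace with more elementary arguments.

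The core pathwise computation is the same: your identity $H(\eta^{xy})-H(\eta)=(a+b)(\log 2 - H_2(p))\le (a+b)\log 2$ is exactly the bound $\frac{2}{n}\,{\rm ent}_{xy}(\eta)\le(\eta(x)+\eta(y))\log 2$ the paper uses, rewritten via $H(\eta)=\log n-D(\eta\|\pi)$. After that you diverge. \emph{First}, to handle irregular graphs the paper solves the Poisson equation $\Lrw\psi=\log 2\,(\langle{\rm deg}\rangle-{\rm deg})$ and corrects the entropy functional by $\psi$, which singles out a specific worst-case vertex $y$ (one where $\psi(y)=0$); you instead average the integrated inequality over the initial vertex $v$ against $\pi$, use stationarity (reversibility is more than you need here) to reduce $\int_0^{T_\eps}\sum_v\pi(v)\Erw_v[{\rm deg}(X_s)]\,\dd s$ to $\langle{\rm deg}\rangle T_\eps$, and pick an above-average $v^*$. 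Your route is shorter and avoids the Poisson solve entirely, at the cost of not exhibiting the vertex explicitly — immaterial for this proposition. \emph{Second}, for the entropy-to-$L^1$ conversion the paper invokes the Fannes--Audenaert inequality \eqref{eq:fannes-audenaert}, whereas you give a self-contained reverse-Pinsker argument: Markov on $-\log\eta(X)$ shows $\eta$ puts mass $\ge 1-H(\eta)/c$ on $B_c=\{\eta\ge e^{-c}\}$, a set of $\pi$-measure $\le e^c/n$, and you then optimize $c$. Since the bound is affine in $H(\eta)$, passing to $\E_{\IND_{v^*}}$ is legitimate, and taking $c=\log n-C$, $n\to\infty$, $C\to\infty$ gives $\liminf\,\E\|\eta_{T_\eps}/\pi-1\|_1\ge 2\eps$, which is in fact a factor $2$ stronger than the stated $\ge\eps$ (and than what Fannes--Audenaert yields through the paper's route). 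This is consistent: as $\eps\uparrow 1$, $T_\eps\downarrow 0$ and the $L^1$ distance from a Dirac mass to $\pi$ is $2(1-1/n)\to 2$, which your bound captures sharply and the paper's does not. So both approaches are valid; yours is more elementary and gives a better constant.
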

\begin{proof}
	As in the proof of Proposition \ref{pr:entropy}, an update at edge $xy$ at time $t$ causes the relative entropy of $\eta_t$ to decrease from $D(\eta_t||\pi)$ to $D(\eta^{xy}_t||\pi)=D(\eta_t||\pi)-\frac{2}n\,{\rm ent}_{xy}(\eta_t)$. Since  $\frac{2}n\,{\rm ent}_{xy}(\eta_t)\leq(\eta_t(x)+\eta_t(y)) \log 2$,  integrating and using \eqref{eq:duality-bin1}, one has
	\begin{align}\label{eq:MSW1}
		\begin{aligned}
	\E_\xi\left[D(\eta_t||\pi)\right]&\ge D(\xi||\pi)-\log 2\int_0^t \sum_{xy\in  E} \E_\xi\left[\eta_s(x)+\eta_s(y)\right]\dd s\\
	&= D(\xi||\pi)-\log 2\int_0^t \sum_{x\in V}\pi^\xi_s(x)\, {\rm deg}(x)\, \dd s\, .
	\end{aligned}
	\end{align}
Since $\langle {\rm deg}\rangle-{\rm deg}$ has mean zero with respect to $\pi$, there exists 
	 $\psi:V\to \R$ solving	
	 \begin{equation}\label{eq:solution-psi}
	\Lrw\psi (x)=\log 2\:(\langle {\rm deg} \rangle -{\rm deg}(x))\ ,\qquad x \in V\ .
	\end{equation}
	Moreover, $\psi$ is defined up to constants, so that we are allowed to select a translation of $\psi$ satisfying
	\begin{equation}\label{eq:conditions-translate}
	\psi\ge 0\ ,\qquad \text{and}\qquad \psi(y)=0\quad \text{for some}\ y \in V\ .
	\end{equation}
Note that if $G$ is regular (i.e., $ {\rm deg}\equiv \text{const.}$), then we can take $\psi\equiv0$.
	Introducing  the function 
	\begin{equation}
		\label{eq:augmented-entropy}
		F(\eta):=D(\eta||\pi)-\sum_{x\in V}\eta(x)\, \psi(x)\ , \qquad \eta\in\Delta(V)\ ,
		\end{equation}
	the two conditions in \eqref{eq:conditions-translate} ensure, respectively,
	\begin{align}\label{eq:F<D}
		F(\eta_t)\le D(\eta_t||\pi)\ ,\qquad \Prob_{\IND_y}\text{-a.s.}\ ,
	\end{align}
and
\begin{align}\label{eq:F0} F(\IND_y)=D(\IND_y||\pi)=\log n\ .
\end{align}
By combining  \eqref{eq:augmented-entropy},  \eqref{eq:duality-bin1}, \eqref{eq:F0},   \eqref{eq:MSW1}, and writing $\pi_t^y=\pi_t^{\IND_y}$,
\begin{align}
	\E_{\IND_y}\left[F(\eta_t)\right]&= \E_{\IND_y}\left[D(\eta_t||\pi)\right]-\sum_{x\in V}\pi^y_t(x)\, \psi(x)\\
	&\ge \log n- \int_0^t\sum_{x\in V}\left( \log 2\,\pi^y_s(x)\,  {\rm deg}(x)+\big(\Lrw\pi^y_s\big)(x)\, \psi(x) \right)\dd s\\
	&= \log n- \int_0^t \sum_{x\in V} \pi^y_s(x)\left( \log 2\, {\rm deg}(x) +\Lrw \psi(x)\right)\dd s
	\, 	
\end{align}
where for the last identity we used that $\Lrw$ is symmetric.   
By \eqref{eq:solution-psi}, the expression inside the time integral above equals $\log 2\, \langle {\rm deg}\rangle$, thus, using \eqref{eq:F<D},
\begin{equation}
\E_{\IND_y}	\left[D(\eta_t||\pi)\right]\geq \E_{\IND_y}\left[F(\eta_t)\right]\ge \log n- \langle {\rm deg}\rangle\, t\log 2\,,\qquad t\geq0\,,
\end{equation}
and choosing $t=T_\varepsilon$ as in \eqref{eq:lb-time-MSW} one has $\E_{\IND_y}	D(\eta_t||\pi)\geq\varepsilon \log n = \varepsilon D(\IND_y||\pi)$. 
To obtain the desired lower bound on the $L^1$-norm one uses the fact that, for $\pi$ uniform,   Fannes--Audenaert inequality (see, e.g., \cite{audenaert_sharp_2007}) 
\begin{align}\label{eq:fannes-audenaert}
	 \left\|\frac{\eta}{\pi}-1 \right\|_1\ge \frac{D(\eta||\pi)}{\log n}-\frac{1}{e \log n}\ ,\qquad \eta \in \Delta(V)\ ,
\end{align}
holds true. This concludes the proof.
\end{proof}
\begin{remark}
The above proof shows that  $\E_{\IND_y}	D(\eta_{T_\varepsilon}||\pi)\geq\varepsilon D(\IND_y||\pi)$. It follows from Proposition \ref{pr:entropy} that $\k(G)T_\varepsilon \leq \log(1/\varepsilon)$, and optimizing over $\varepsilon\in(0,1)$, one obtains the general bound 
\begin{align}\label{eq:upbokappa}
\k(G)\leq \frac{ \langle {\rm deg}\rangle}{\log_2 n}\,.
\end{align}
Remarkably, if $G=K_n$ is the complete graph, the estimate \eqref{eq:upbokappa} captures the exact value $\k(K_n)=(n-1)/\log_2(n)$ (see \cite[Remark 1.2]{bristiel_caputo_entropy_2021} and note that a factor $2$ has to be removed there because our random walk has jump rate $1/2$). Moreover, the bound \eqref{eq:upbokappa} is saturated also when $G$ is the  hypercube, in  which case one has $\kappa(G)=1$.
\end{remark}
\begin{remark}
 For certain families of graphs such as the complete graphs, the  lower bound in \eqref{eq:lb-MSW}  predicts very sharply the order of the $L^1$-mixing time for ${\rm Avg}(G)$, see the  discussion in Section \ref{sec:comparison}. However, due to the $\varepsilon$-dependence
in \eqref{eq:lb-time-MSW}, this lower bound is not as sharp as needed to establish a cutoff phenomenon. 
\end{remark}

\subsection{Comparison of mixing times and summary of the main results}\label{sec:comparison}
Following an established convention, we let the $L^p$-mixing time of ${\rm RW}(G)$ be defined as the first time $t$ such that the $L^p$-distance $\|\pi_t^\xi/\pi-1\|_p$ is at most $1/2$, regardless of the initial condition $\xi$. The  $L^p$-mixing time of ${\rm Avg}(G)$ is defined in the same way by using the averaged quantity \eqref{eq:tfun} instead of the $L^p$-distance. In Table \ref{table:1} and Table \ref{table:2} below we summarize all known estimates about such mixing times, including the results in the present paper. 
We display, in particular:
\begin{enumerate}

  \item lattice approximations of $d$-dimensional boxes $G=[0,1]^d\cap \frac{\mathbb Z^d}{n}$ (this extends to  all finite-dimensional graphs, see \cite{quattropani2021mixing}); 
  
  \item 
   hypercube $G=\{0,1\}^d$ (Theorem \ref{th:hypercube}); 
   
   \item
    complete graph $G=K_n$ (see \cite{chatterjee2020phase}); 
    
    \item
     complete bipartite graphs $G=K_{m,n-m}$ (Theorem \ref{th:K1} and Theorem \ref{th:K2}).
\end{enumerate}

The purpose of these tables is that of comparing mixing times; hence,  we express them all  in terms of   the relaxation time $t_{\rm rel}$ of ${\rm RW}(G)$ defined in \eqref{eq:def-trel}. As a consequence,  such expressions will depend on the specific choice of the time normalization solely through the exact value of $t_{\rm rel}$.

\renewcommand\arraystretch{1.7}
\begin{table}[h]
	\begin{tabular}{c|c|c|} 
		& $L^1$, ${\rm RW}(G)$ & $L^1$, ${\rm Avg}(G)$   \\\hline
	 $[0,1]^d\cap \frac{\mathbb Z^d}{n}$ &$\Theta(t_{\rm rel})$   &$\Theta(t_{\rm rel})$    \\\hline
		 $\{0,1\}^d$	   &$\frac{t_{\rm rel}}{2}\left(\log d +\Theta(1)\right)$ & $\frac{t_{\rm rel}}{2}\left(\log d +\Theta(1)\right)$
		\\ \hline
		$K_n$	  & $\Theta(t_{\rm rel})$ & $\frac{1}{2\log 2}\,t_{\rm rel}\left(\log n+\Theta(\sqrt{\log n})\right)	$ 
		\\
		\hline
		$K_{m,n-m}$ 	 &$\Theta(t_{\rm rel})$  &  $\frac{n}{n-m}\frac{1}{4\log 2}\,t_{\rm rel}\left(\log n+\Theta(\sqrt{\log n})\right)$\\\hline
	\end{tabular}
	\vspace{2ex}
	\caption{$L^1$-mixing times of ${\rm RW}(G)$ and ${\rm Avg}(G)$.}
	\label{table:1}
	\begin{tabular}{c|c|c|} 
		 &$L^2$, ${\rm RW}(G)$ & $L^2$, ${\rm Avg}(G)$  \\\hline
		  $[0,1]^d\cap \frac{\mathbb Z^d}{n}$   & $\Theta(t_{\rm rel})$ &$\Theta(t_{\rm rel})$  \\\hline
		  $\{0,1\}^d$	   &$\frac{t_{\rm rel}}{2}\left(\log d +\Theta(1)\right)$ & $\frac{t_{\rm rel}}{2}\left(\log d +\Theta(1)\right)$
		  \\ \hline
		  	$K_n$	 & $\frac{t_{\rm rel}}{2}\left(\log n+\Theta(1)\right)$ &$t_{\rm rel}\left(\log n+\Theta(1)\right)$
		  \\
		  \hline
		  	$K_{m,n-m}$ 	 & $\frac{t_{\rm rel}}{2}\left(\log n +\Theta(1)\right)$& $C(\frac{m}{n})\,t_{\rm rel}\left(\log n +\Theta(1)\right)$\\\hline
	\end{tabular}
	\vspace{2ex}
	\caption{$L^2$-mixing times of ${\rm RW}(G)$ and ${\rm Avg}(G)$; $C(\cdot)$ is defined in \eqref{eq:C} below.}
		\label{table:2}
\end{table}
In Table \ref{table:2}, the function $b\in (0,1/2]\mapsto C(b)\in (3/4,1]$ is given by (cf.\ Rmk.\ \ref{rem:cutoff-trel} below)
\begin{equation}\label{eq:C}
	C(b):= \frac{4}{3}\frac{b}{1-\sqrt{1-\frac{32}{9}b\left(1-b\right)}}\ ,\qquad\text{and}\qquad \lim_{b\downarrow 0}C(b)=\frac{3}{4}\ ,\quad C(\tfrac{1}{2})=1\ .
\end{equation}

We conclude with a discussion of the main results of this paper and some comments on the strategy of proof together with comments on the applications of the general bounds presented in this section. 

\subsubsection{Hypercube}\label{sec:summary-hypercube-fdg} Here $G=\{0,1\}^d$, $n=2^d$, and with our time normalization one has $t_{\rm rel}=1$. Thus, Theorem \ref{th:hypercube} implies that  mixing of ${\rm Avg}(G)$  on the hypercube occurs with cutoff at the same times $T=\frac{t_{\rm rel}}{2}\left(\log d+\Theta(1)\right)$ of that of ${\rm RW}(G)$; we refer to \cite[Example, p.\ 987]{lubetzky_sly_2014} for further details on mixing for ${\rm RW}(G)$ on the hypercube. 
Therefore, by combining this with  Proposition \ref{prop:general-lb}, we readily obtain an explicit lower bound for $L^1$- and $L^2$-mixing of ${\rm Avg}(G)$. By Jensen inequality, Theorem \ref{th:hypercube} boils down, then, to establishing a matching upper bound for the $L^2$-mixing time of ${\rm Avg}(G)$.

 For this purpose,  let us observe that 
the general approach via $L^2$-contraction from Proposition \ref{prop:AL}  would give only $T\le t_{\rm rel}\, \Theta(d)$.
On the other hand, the entropy contraction from  
Proposition \ref{pr:entropy} yields $T\le t_{\rm rel}\left(\log d+\Theta(1)\right)$ for the $L^1$-distance, that is an upper bound off by a factor $2$. Indeed, this latter bound follows from Corollary \ref{cor:entropy} and the already mentioned fact that $\k(G)=1$ when $G$ is the hypercube. Instead, to prove the upper bound in Theorem \ref{th:hypercube}, we provide a tight estimate for ${\rm CRW}(G)$ which controls the $L^2$-distance via the first decomposition in Proposition \ref{pr:expressions-L2-norm}; see Proposition \ref{prop:l2cutoff-S} below.

Furthermore, we observe that  the lower bound based on relative entropy in Proposition \ref{prop:MSW} does not provide the correct order of magnitude for the $L^1$-mixing time in the hypercube: since $\langle {\rm deg}\rangle= d$, this yields $T\ge (1-\varepsilon)\, t_{\rm rel}$ for every $\varepsilon \in (0,1)$.
Finally, we remark that the hypercube example seems to disprove Conjecture 1 in \cite{movassagh_repeated2022}. Indeed, mixing for ${\rm Avg}(G)$ occurs at times $\Theta(t_{\rm rel}\log \log n)$, while the authors in \cite{movassagh_repeated2022} conjectured that  mixing  must occur at times either of order $\Theta(t_{\rm rel})$ or $\Theta(t_{\rm rel}\log n)$.

\subsubsection{Complete bipartite graphs}\label{suse:strategy-bipartite}
For complete bipartite graphs $K_{m,n-m}$, both the $L^1$- and $L^2$-mixing times
for ${\rm Avg}(G)$
are strictly larger than their analogues for ${\rm RW}(G)$,
(cf.\ Tables \ref{table:1} and Table \ref{table:2}) and therefore the lower bound of Proposition \ref{prop:general-lb} turns out to be too poor. 
Moreover, the leading-order constants depend also on $\lim_{n\to \infty}\frac{m}{n}\in [0,1/2]$: when $m=\frac{n}{2}$ (regular case), one recovers the complete-graph values of the mixing times for both $L^1$- and $L^2$-mixing, asymptotically saturating Aldous-Lanoue's inequality in \eqref{eq:AL}; when $m=1$ (star graph), $L^1$-mixing occurs strictly in between $L^1$- and $L^2$-mixing for ${\rm RW}(G)$, while $L^2$-mixing for ${\rm Avg}(G)$  sits strictly in between $L^2$-mixing for ${\rm RW}(G)$ and the upper bound prescribed by \eqref{eq:AL}. Analogous observations hold for all other intermediate regimes of $m$.

For the  $L^1$ and $L^2$ mixing we use two different strategies of proof. For what concerns $L^1$, we extend to complete bipartite graphs  $G=K_{m,n-m}$ the probabilistic arguments developed in \cite{chatterjee2020phase}  for the complete graph $K_n$. 
As for $L^2$, due to the symmetries of the underlying graph, we exploit a representation of the $L^2$-distance in terms of a simpler $5$-state Markov chain, and analyze  its asymptotic behavior in detail.

We remark that, as for the case of the complete graph considered in \cite{movassagh_repeated2022}, the entropic lower bound of Proposition \ref{prop:MSW} predicts the correct order of the $L^1$-mixing time also for complete bipartite graphs. On the contrary, the upper bound based on relative entropy contraction of Proposition \ref{pr:entropy} yields an estimates that is off by a  $\log\log n$ factor in this case.

\section{Mixing on the hypercube}
\label{sec:hypercube}
This section is devoted to the proof of Theorem \ref{th:hypercube}. We let $G=(V,E)$ be the $d$-hypercube $V=\{0,1\}^d$, whose edge set consists of unordered pairs $\{x,x^i\}$, with $x=(x_1,\ldots, x_d)\in \{0,1\}^d$ and 
\begin{align}
	x^i_j=(x^i)_j:=\begin{dcases}
		x_j &\text{if}\ j\neq i\\
		1-x_i &\text{otherwise}\  .
	\end{dcases}
\end{align}
By Proposition \ref{prop:worst-case} and transitivity of the underlying graph, \emph{all} Dirac masses are  worst-case initial conditions for ${\rm Avg}(G)$. Hence, throughout this section,  we fix an arbitrary $x_0\in V$, and set $\xi=\IND_{x_0}$ as an initial condition. 

As already discussed in Section \ref{sec:summary-hypercube-fdg}, the claim in Theorem \ref{th:hypercube} reduces to estimating from above the $L^2$-mixing time of  ${\rm Avg}(G)$.
 In view of \eqref{eq:second-moments}, we have, for all $t\ge 0$,
\begin{equation}\label{eq:l2-hypercube}
 	\E_{\IND_{x_0}}\left[\left\|\frac{\eta_t}{\pi}-1  \right\|_2^2 \right]	=2^d\:\Purw_{x_0,x_0}(X_t=Y_t)-1\ .
 \end{equation}
	In order to estimate the probability on the right-hand side of \eqref{eq:l2-hypercube}, we analyze the  Hamming distance (cf.\ \eqref{eq:hamming-distance} below) between these two coupled walks. It is well-known that, in absence of coupling effects, i.e., for two independent copies of ${\rm RW}(G)$ on $G=\{0,1\}^d$, the process induced by this projection corresponds  to the  Markov chain on $\{0,1,\ldots, d\}$ known as Ehrenfest urn (see, e.g., \cite[Sect.\ 2.3]{levin2017markov}). The Hamming distance between the two coordinates of ${\rm CRW}(G)$ still yields a birth-and-death chain, but this time with a \textquotedblleft defect\textquotedblright\ due to the interaction.
	  For the purpose of proving Theorem \ref{th:hypercube}, we carry out an analysis of the $L^2$-mixing time of such a perturbation of the  Ehrenfest urn. 
Mixing with respect to $L^1$- and separation distances for perturbations of birth-and-death chains has been studied 
in \cite{diaconis_saloff-coste_separation_2006} and \cite{chen_saloff-coste_computing_2015}, see, e.g., \cite[Th.\ 4.10]{chen_saloff-coste_mixing_bd_chains_2013}. However, for the $L^2$-distance, the situation seems to be much less generally understood. For this we combine general results on birth-and-death chains with the recent characterization \cite{hermon_peres_characterization_2018} of $L^2$-mixing times in the reversible context.

\subsection{A perturbation of the Ehrenfest urn}\label{sec:ehrenfest}  We start with some definitions. Consider the two birth-and-death chains obtained from  both  ${\rm RW}(G)\otimes{\rm RW}(G)$ (i.e., two independent copies of ${\rm RW}(G)$) and ${\rm CRW}(G)$ via  the projection
\begin{equation}\label{eq:hamming-distance}
	(x,y)\in \{0,1\}^d\times \{0,1\}^d\longmapsto |x-y|:= \sum_{i=1}^d |x_i-y_i| \in \{0,1,\ldots, d\}\ .
\end{equation}
Let $P_t(k,\ell)$ and $S_t(k,\ell)$, $k,\ell \in \{0,1,\ldots, d\}$,
denote the   transition kernels of these two Markov chains,  and let $\cL^P$ and $\cL^S$ denote their infinitesimal generators, respectively. Writing $p^{P,S}_k$, resp.\  $q^{P,S}_k$, for the associated birth, resp.\ death, rates  when in state $k\in \{0,1,\ldots, d\}$, we have
\begin{equation}\label{eq:rates-P}
p^P_k=d-k\ ,\qquad	 q^P_k= k\ ,
\end{equation}
and
\begin{equation}\label{eq:rates-S}
p^S_k= \IND_{k\neq 0}\, p^P_k+\IND_{k=0}\, \frac{d}{2}\, ,\qquad q^S_k= \IND_{k\neq 1}\, q^P_k + 1_{k=1}\, \frac{1}{2} \ ,
\end{equation}
while the Binomial distribution $\nu$ given by
\begin{equation}\label{eq:nu-bin}\nu(k):= \binom{d}{k}\, 2^{-d}\ ,\qquad k\in \{0,1,\ldots, d\}\ ,
	\end{equation}
is reversible for both chains.
Finally, we let $P_t$ and $S_t$ represent the Markov semigroups of the (continuous--time) Ehrenfest urn and its perturbation, respectively. 

With these definitions,  the right-hand side of \eqref{eq:l2-hypercube} further reads as
\begin{align}
	2^d S_t(0,0)-1
	& = \left\| S_t\left(\frac{\IND_0}{\nu}\right)-1\right\|_{L^\infty(\nu)}
	= \left\| S_{t/2}\left(\frac{\IND_0}{\nu} \right)-1 \right\|_{L^2(\nu)}^2 \,.
\end{align}
The desired upper bound on the $L^2$-mixing of ${\rm Avg}(G)$ at times $\frac{1}{2}\log d+\Theta(1)$ then follows by \eqref{eq:l2-hypercube} as soon as the $S$-chain is well-mixed in $L^2$ at times $\frac{1}{4}\log d+ \Theta(1)$. This last claim is established in the next proposition, which is the main technical result of this section.
\begin{proposition}\label{prop:l2cutoff-S}
	For every $\varepsilon>0$, there exists $c=c_\varepsilon>0$ such that, for all $d$ sufficiently large, 
	\begin{equation}\label{eq:prop-l2cutoff-S}
		\left\| S_t\left(\frac{\IND_0}{\nu} \right)-1 \right\|_{L^2(\nu)}^2\le \varepsilon
	\end{equation}
holds for all $t>\tfrac14 \log d +c$. 
\end{proposition}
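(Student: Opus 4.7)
The strategy is to exploit the close relationship between the $S$-chain and the (unperturbed) Ehrenfest urn $P$, for which the $L^2$-distance to equilibrium admits a sharp explicit expression.

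\emph{Reduction to a return probability.} Reversibility of $S$ with respect to $\nu$ yields the standard identity
$$\left\|S_t(\IND_0/\nu) - 1\right\|_{L^2(\nu)}^2 \,=\, \nu(0)^{-1}\,S_{2t}(0,0) - 1 \,=\, 2^d\, S_{2t}(0,0) - 1,$$
so the task reduces to an upper bound on the return probability $S_s(0,0)$ at the time $s=2t$. For the Ehrenfest chain, the tensor decomposition of $|X-Y|$ into $d$ independent two-state chains gives the exact identity $2^d P_s(0,0) = (1+e^{-2s})^d$, which is $\le 1+\varepsilon$ as soon as $s \ge \tfrac{1}{2}\log d + c_\varepsilon$. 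We therefore aim to establish the same bound, up to an $O(1)$ shift in $c_\varepsilon$, for the $S$-chain.

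\emph{Comparison with $P$.} The generators $\cL^S$ and $\cL^P$ agree on $\{2,\dots,d\}$. I would construct a coupling of $(X_t^S, X_t^P)$ starting at $0$ whose jump times coincide at those levels, so that the two trajectories differ only during excursions in $\{0,1\}$. A direct computation shows that the total expected sojourn of $X^P$ in $\{0,1\}$ starting from $0$ is only $O(1/d)$ (one checks, e.g., that both $\int_0^\infty P_s(0,0)\,\dd s$ and $\int_0^\infty P_s(0,1)\,\dd s$ are of that order), and $X^S$ spends at most twice as long there. To convert this pathwise closeness into a comparison of $S_s(0,0)$ and $P_s(0,0)$, I would invoke the Hermon--Peres characterization of $L^2$-mixing for reversible chains, which encodes the $L^2$-distance in terms of concentration of hitting times. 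Since the hitting-time distributions of $X^S$ and $X^P$ agree up to perturbations of order $O(1/d)$, the $O(1)$ $L^2$-mixing window of the Ehrenfest urn is transferred to $S$ with only an $O(1)$ adjustment of $c_\varepsilon$.

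\emph{Main obstacle.} The delicate point is the final transfer. The rank-two perturbation $V=\cL^S-\cL^P$ has operator norm $\Theta(d)$, so naive spectral perturbation bounds on the spectrum of $-\cL^S$ are useless; one must exploit the joint smallness of (i) the support of $V$, namely $\{0,1\}$, on which the $P$-chain started at $0$ spends only total time $O(1/d)$, and (ii) the stationary weight $\nu(0)+\nu(1) = (1+d)2^{-d}$ assigned to that support. Combining the Hermon--Peres characterization with the classical machinery for birth-and-death chains (in the spirit of Chen--Saloff-Coste) provides the right framework in which this localized perturbation can be shown to affect the $L^2$-mixing time only additively by $O(1)$.
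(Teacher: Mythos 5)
Your proposal correctly identifies the Hermon--Peres characterization of $L^2$-mixing via hitting-time concentration as the right framework, which is indeed what the paper uses. However, the central technical step is left unresolved: you acknowledge the ``main obstacle'' of transferring the localized-perturbation intuition into a quantitative comparison between the $L^2$-mixing of $S$ and $P$, but you do not carry it out, and the route you gesture at would not work as stated.

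Two specific problems. First, the proposed coupling of $(X_t^S,X_t^P)$ started at $0$ ``whose jump times coincide on $\{2,\dots,d\}$'' is not constructible in the way you suggest: once the two trajectories separate (which happens on the first attempted exit from $0$, since the rates there already differ by a factor $2$), the two chains occupy different states with different rates, and there is no way to resynchronize jumps that lets you track the discrepancy as purely ``time spent in $\{0,1\}$.'' Second, the assertion that ``the hitting-time distributions of $X^S$ and $X^P$ agree up to perturbations of order $O(1/d)$'' is unsubstantiated and stronger than what the paper needs or proves. The paper's Lemma~\ref{lemma:final} instead establishes a weaker but sufficient inequality: for each $\sigma>0$ there is a constant $b=b(\sigma)$ (independent of $d$) such that $\Prob^S_0(\tau_M>t+b)\le \nu([0,M))^\sigma+\Prob^P_0(\tau_M>t)$ uniformly over $M\le d/2$. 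The mechanism behind this bound is the Brown--Shao representation of birth-and-death hitting times as sums of independent exponentials with rates given by the eigenvalues of the killed generator, combined with Weyl's interlacing theorem for the rank-one perturbation $-\tilde\cL^S|_M$ of $-\tilde\cL^P|_M$, plus a discrete weighted Hardy's inequality (Appendix~\ref{app:est-hardy}) to control the smallest killed eigenvalue $\lambda^P_{M,0}$. None of these ingredients appear in your sketch; without an argument of comparable precision the transfer from $P$ to $S$ inside the Hermon--Peres machinery does not go through.
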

The proof of this result is postponed to Section \ref{sec:proof-prop-3.1} below.

 \begin{remark} By well-known monotonicity properties of birth-and-death chains (see, e.g., \cite[Lem.\ 4.1]{ding_lubetzky_peres_total_variation_2010}), we have the following comparison inequalities:
\begin{equation}\label{eq:sandwich}
P_t(0,0)\le  S_t(0,0)\le P_{t/2}(0,0)\ ,\qquad t \ge  0\ .
\end{equation}
Since the $P$-chain mixes abruptly at times $t\approx \frac{1}{4}\log d$, the bounds in \eqref{eq:sandwich} are insufficient to establish  cutoff for the $S$-chain. 
In this respect, Proposition \ref{prop:l2cutoff-S} tells us that the first inequality in \eqref{eq:sandwich} is sharp for times $t\approx \tfrac14 \log d$. Therefore, the second inequality --- which, roughly speaking, accounts for the early delay of the $S$-chain when exiting state $0$ ---  needs to be improved for such times.  \end{remark}


\subsection{Proof of Proposition \ref{prop:l2cutoff-S}}\label{sec:proof-prop-3.1}
The idea of the proof Proposition \ref{prop:l2cutoff-S} is based on the probabilistic approach to $L^2$-mixing recently introduced in \cite{hermon_peres_characterization_2018}, which links $L^2$-mixing times to estimates of suitable hitting times. Building on such a machinery, the following lemma represents the main technical step to derive the claim in Proposition \ref{prop:l2cutoff-S}. Its proof combines two ingredients: the knowledge of the distribution of hitting times for birth-and-death chains (\cite{brown_shao_1987}), and a discrete weighted Hardy's inequality (\cite{miclo_example_1999}). As a result, we obtain a comparison between the $P$-chain and the $S$-chain that is much finer than the one implied by \eqref{eq:sandwich}. This, together with the results in \cite{hermon_peres_characterization_2018}, provides a tight comparison between the mixing behavior of the two chains.

In what follows, $\Prob^{P,S}_k$ denotes the law of the $P$- or $S$-chain defined in Section \ref{sec:ehrenfest} when starting from state $k\in \{0,1,\ldots, d\}$, while $\nu([0,M)):= \sum_{k=0}^{M-1}\nu(k)$ (cf.\ \eqref{eq:nu-bin}). 
\begin{lemma}\label{lemma:final}
Define $\tau_M$ to be the first hitting time of the state $M\in\{0,1,\dots, d\}$. Then, for every $\sigma>0$,  there exists $b=b(\sigma)>0$ satisfying, for all   $d$ large enough,
\begin{equation}
	\Prob^S_0\left(\tau_M>t+b\right)\le \left(\nu([0,M))\right)^\sigma+ \Prob^P_0\left(\tau_M>t\right) \ ,\qquad t>0\ , \ M\in\{1,\dots,d/2\}\ .
\end{equation}
\end{lemma}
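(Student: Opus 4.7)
The plan is to combine three ingredients: Brown--Shao's representation of hitting times of reversible birth-and-death chains as sums of independent exponentials, Weyl's interlacing inequality exploiting that the difference between the $P$-chain and $S$-chain sub-generators is a rank-one perturbation, and Miclo's discrete weighted Hardy inequality to control the slowest such exponential rate.

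First I would apply the Brown--Shao theorem \cite{brown_shao_1987} to both chains and write $\tau^P_M \stackrel{d}{=} \sum_{i=1}^M E^P_i$ and $\tau^S_M \stackrel{d}{=} \sum_{i=1}^M E^S_i$, where $E^{P,S}_i \sim \mathrm{Exp}(\mu^{P,S}_i)$ are independent and $0 < \mu^{P,S}_1 < \cdots < \mu^{P,S}_M$ are the eigenvalues of minus the respective sub-generator on the transient set $\{0,\dots,M-1\}$. After conjugating by $\mathrm{diag}(\sqrt{\nu})$, the symmetrized difference of the two generators is supported on the $2\times 2$ block indexed by $\{0,1\}$, which a direct computation yields as
\[
\begin{pmatrix} -d/2 & \sqrt{d}/2 \\ \sqrt{d}/2 & -1/2 \end{pmatrix}.
\]
This block has vanishing determinant and negative trace, so the perturbation is rank-one and negative semi-definite; Weyl's interlacing inequality then gives $\mu^P_{i-1} \leq \mu^S_i \leq \mu^P_i$ for $i=1,\dots,M$, with the convention $\mu^P_0 := 0$.

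Next I would exploit the inequalities $\mu^S_{i+1} \geq \mu^P_i$ to build, via the inverse-CDF construction driven by $M-1$ common uniforms (plus two independent ones for $E^S_1$ and $E^P_M$), a coupling under which $E^S_{i+1} \leq E^P_i$ almost surely for each $i=1,\dots,M-1$ and $E^S_1$ is independent of $\tau^P_M$. Summing gives
\[
\tau^S_M \;\leq\; E^S_1 + \sum_{i=1}^{M-1} E^P_i \;\leq\; E^S_1 + \tau^P_M,
\]
which immediately produces
\[
\Prob^S_0(\tau_M > t+b) \;\leq\; e^{-\mu^S_1 b} + \Prob^P_0(\tau_M > t).
\]
The task is thus reduced to showing $e^{-\mu^S_1 b} \leq \nu([0,M))^\sigma$ for some $b=b(\sigma)$ independent of $d$ and $M$.

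For this tail bound I would invoke Miclo's discrete weighted Hardy inequality \cite{miclo_example_1999} applied to the $S$-chain killed at $M$: $\mu^S_1 \asymp 1/B^S$ with $B^S = \max_{0 \leq k < M} \nu([0,k]) \sum_{j=k}^{M-1} 1/(\nu(j) p^S_j)$. Because $p^S_j = p^P_j$ for $j \geq 1$ and $p^S_0 = p^P_0/2$, one has $B^S \leq 2B^P$, and matters reduce to the uniform estimate $B^P \log(1/\nu([0,M))) \leq C$ across all $M \in \{1,\dots,d/2\}$ and $d$ large. Using the identity $\nu(j) p^P_j = (j+1)\nu(j+1)$ to rewrite the inner sum as $\sum_{j=k+1}^{M} 1/(j \nu(j))$, the desired bound follows by case analysis: for $M$ near the boundary the argmax of the Muckenhoupt expression sits at small $k$ and is controlled by a Chernoff-type bound on $\nu([0,M))$; for $M$ close to $d/2$ the argmax lies in a Gaussian window of width $\sqrt{d}$ around $d/2$, where de~Moivre--Laplace estimates show both the maximum and $\log(1/\nu([0,M)))$ to be $O(1)$; intermediate $M$ interpolate. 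This case analysis is the main obstacle, since the argmax shifts with $M$ and one must carefully match the dominant Muckenhoupt contribution to the correct regime of binomial concentration in order to keep $B^P \log(1/\nu([0,M)))$ uniformly bounded in both $d$ and $M$. Setting $b := \sigma/\kappa$ for the resulting universal $\kappa > 0$ then closes the argument.
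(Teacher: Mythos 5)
Your proposal is correct and follows essentially the same route as the paper: Brown--Shao exponential representation of the hitting time, conjugation by $\diag(\sqrt\nu)$ plus Weyl interlacing for the rank-one perturbation, splitting off the slowest exponential $E^S_1$, and Hardy's inequality together with binomial tail estimates to bound its rate uniformly in $M$ and $d$. The only variation is cosmetic (you apply Hardy directly to the $S$-chain via $B^S\le 2B^P$, while the paper first reduces $\lambda^S_{M,0}$ to $\tfrac12\lambda^P_{M,0}$ by a variational argument and then applies Hardy to the $P$-chain), and your off-diagonal perturbation entry $\sqrt d/2$ is in fact the correct one that makes the $2\times 2$ block singular --- the paper's printed $\sqrt{d/2}$ appears to be a typo.
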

\begin{proof}
As shown in \cite{brown_shao_1987} (see also, e.g., \cite{chen_saloff-coste_mixing_bd_chains_2013}), for any birth-and-death chain on $\{0,1,\ldots, d\}$ with rate matrix $\cL$ started in $k=0$, the hitting time $\tau_M$ of the target $M \in \{1,\ldots, d\}$ is distributed as the sum of $M$ independent exponential random variables with  rates given by   $(\lambda_{M,i})_{i=0}^{M-1}$, these being the distinct eigenvalues of $-\cL|_M$, the generator of the sub-Markovian chain $\cL$ killed upon exiting $\{0,1,\ldots, M-1\}$. 

Now, while the eigenvalues and eigenfunctions of the Ehrenfest urn $\cL^P$ defined in Section \ref{sec:ehrenfest} are well-known (see, e.g., \cite{karlin_mcgregor_ehrenfest_1965}), neither those of  $\cL^P|_M$ nor of $\cL^S|_M$ are explicit. Nevertheless, a comparison of eigenvalues is available. Letting $U:= \diag(\sqrt \nu)$ and	$\tilde \cL^{P,S}:= U\cL^{P,S} U^{-1}$, we have	\begin{align}
	-\tilde \cL^S=-\tilde \cL^P+\cR\ ,	
\end{align}
with $\cR=(\cR_{ij})_{i,j=0}^d$ being the following rank-one matrix: $\cR_{ij}=0$ if $i+j>1$,	 and
\begin{equation}
	\cR_{00}=-\frac{d}{2}\ ,\qquad \cR_{01}=\cR_{10}=\sqrt{\frac{d}{2}}\ ,\qquad \cR_{11}=-\frac{1}{2}\ .
\end{equation}
By removing, for every $M\in \{1,\ldots, d\}$, the last $d-M+1$  rows and columns of such matrices, we recover analogous relations for the sub-Markovian chains killed upon exiting  $\{0,1,\ldots, M-1\}$; in particular, the symmetric matrix $-\tilde \cL^S|_M$ is a rank-one perturbation of $-\tilde \cL^P|_M$. Therefore, Weyl theorem  (see, e.g., \cite[Th.\ 4.3.1]{horn_matrix_2012})  yields \begin{equation}\label{eq:eigenvalues1}
(\lambda_{M,i-1}^P\vee \lambda_{M,i-1}^S)\le	 \lambda_{M,i}^S\le \lambda_{M,i}^P \ ,\qquad i=0,\ldots, M-1\ .
	\end{equation}
Moreover, the variational characterization of the spectral bounds $\lambda_{M,0}^P$ and $\lambda_{M,0}^S$ and the definitions of rates in \eqref{eq:rates-P}--\eqref{eq:rates-S} ensure that
\begin{equation}\label{eq:eigenvalues2}
\frac{1}{2}\,\lambda_{M,0}^P\le 	\lambda_{M,0}^S \ .
\end{equation} 

Now, let $Z^{P,S}_{M,i}, \tilde Z^{P,S}_{M,i}\sim {\rm Exp}(\lambda^{P,S}_{M,i})$ be all mutually independent exponential r.v.'s, with $\Prob$ denoting their joint law; then,  for all $s>0$ and $M\in \{1,\ldots, d\}$, \eqref{eq:eigenvalues1}--\eqref{eq:eigenvalues2} yield
\begin{align}
	 \Prob\left(\sum_{i=0}^{M-1}Z^S_{M,i}>s\right)
		\le \Prob\left(Z^S_{M,0}+\sum_{i=1}^{M-1} Z^P_{M,i-1}>s\right)\le \Prob\left(2\,\tilde Z^P_{M,0}+\sum_{i=0}^{M-1}Z^P_{M,i}>s\right)\ .
\end{align}
 Setting $s=t+b>0$ in the above inequality,	by independence, we further get
 \begin{align}
 	\Prob\left(\sum_{i=0}^{M-1}Z^S_{M,i}>t+b\right)\le \Prob\left( Z^P_{M,0}>b/2\right)+\Prob\left(\sum_{i=0}^{M-1} Z^P_{M,i}>t\right)\ .
 \end{align}
In view of $Z^P_{M,0}\sim {\rm Exp}(\lambda^P_{M,0})$ and the aforementioned explicit distribution of  $\tau_M$ under $\Prob_0^{P,S}$,  the above inequality reads as 
\begin{align}\label{eq:final-lemma}
	\Prob^S_0\left(\tau_M>t+b\right)\le \exp\left(-\frac{b}{2}\,\lambda^P_{M,0}\right) + \Prob^P_0\left(\tau_M>t\right)\ .
\end{align}
We estimate the first term on the right-hand side above in Appendix \ref{app:est-hardy} below, by proving
\begin{align}\label{eq:eigenvaluessss2}
	\limsup_{d\to \infty}\sup_{1\le M\le d/2}\left(\lambda^P_{M,0}\right)^{-1} \log\left(\frac{1}{ \nu([0,M))}\right)<\infty\ ,
\end{align}
which, combined with \eqref{eq:final-lemma}, yields the assertion of the lemma.
\end{proof}

We are now in a good position to prove Proposition \ref{prop:l2cutoff-S}.
\begin{proof}[Proof of Proposition \ref{prop:l2cutoff-S}]
	We start by recalling some of the results in \cite{hermon_peres_characterization_2018} (namely, Prop.\ 3.8, and Thm.s\ 1.2 \& 5.1), suitably adapted to the context of a generic birth-and-death chain on $\{0,1,\ldots, d\}$ with transition matrix $Q_t$, reversible measure $\mu$, and starting from $k=0$. Concisely, the following chain of inequalities hold true in this context:
	\begin{align}\label{eq:hermon-peres-inequalities}
		\rho^Q_0\le T^Q_{2,0}\le \bar \rho^Q_0+5\, t_{\rm rel}^Q\le \rho_0^Q+8\, t^Q_{\rm LS}+10\, t^Q_{\rm rel}\ ,
	\end{align}
where:
\begin{itemize}
	\item $T^Q_{2,0}$ is the $L^2$-mixing time of the $Q$-chain when starting from $k=0$, i.e., 
	\begin{equation}
		T^Q_{2,0}:= \inf\left\{t\ge 0: \left\|Q_t\left(\frac{\IND_0}{\mu}\right)-1\right\|_{L^2(\mu)}\le \frac{1}{2}\right\}\ ;	
	\end{equation}
\item $t_{\rm rel}^Q:=(\lambda_{\rm gap}^Q)^{-1}$ is the relaxation time of the chain;
\item  $t_{\rm LS}^Q:=(\beta_{\rm LS}^Q)^{-1}$ is the inverse of the Log-Sobolev constant of the chain; 
\item letting $\mu([0,M)):=\sum_{\ell=0}^{M-1}\mu(\ell)$ and $\mu([M,d]):=\sum_{\ell=M}^d\mu(\ell)$, $M\in \{1,\ldots, d\}$,	
\begin{align}\label{eq:rho-0-bar}
	\bar \rho_0^Q\coloneqq\inf\left\{t\ge 0 : \Prob^Q_0\left(\tau_M>t\right)\le \mu([0,M))^3\ \text{for all}\ M\in \{1,\ldots, d/2\}\right\}\ ;
\end{align}
\item 
 $\rho_0^Q$ is defined analogously to $\bar \rho_0^Q$ with $\mu([0,M))^3$ replaced by \begin{equation}\mu([0,M))+\frac{1}{2}\sqrt{\mu([0,M))\,\mu([M,d])}\ .
\end{equation}
\end{itemize}

We now specialize the inequalities in \eqref{eq:hermon-peres-inequalities} to our setting of $P$- and $S$-chain. It is immediate to check by tensorization and comparison of Dirichlet forms that $t^P_{\rm rel},  t^P_{\rm LS}, t^S_{\rm rel}, t^S_{\rm LS}=O(1)$. 
Further,  we know the $L^2$-mixing of the Ehrenfest urn, namely $T^P_{2,0}\le \frac{1}{4}\log d+\Theta(1)$. These two observations yield
\begin{align}
	\rho_0^P\le \bar \rho_0^P \le \frac{1}{4}\log d+a\ ,
\end{align}for some $a>0$ large enough. In view of the definition of $\bar \rho_0^P$ (see \eqref{eq:rho-0-bar}), and 	 Lemma \ref{lemma:final},
we get, for all $M\in \{1,\ldots, d/2\}$,	
\begin{align}
	\Prob^S_0\left(\tau_M>\frac{1}{4}\log d+a+b\right)&\le \nu([0,M))^3+\Prob^P_0\left(\tau_M>\frac{1}{4}\log d+a\right)\\
	&\le 2\, \nu([0,M))^3\ ,
\end{align}
where $b=b(\sigma)>0$ is the constant appearing in the statement of Lemma \ref{lemma:final} for $\sigma=3$. Note that $ 2\, \nu([0,M))^3\le \nu([0,M))$ for all $M \in \{1,\ldots, d/2\}$. Therefore, 	the definition of $\rho_0^S$, the last inequality in \eqref{eq:hermon-peres-inequalities} and $t_{\rm rel}^S,t_{\rm LS}^S=O(1)$ yield the desired upper bound on $T^S_{2,0}$, thus, concluding the proof of the proposition. 
\end{proof}

\section{$L^1$-mixing on  complete bipartite graphs}\label{sec:complete-bipartite-l1}

In this section, we prove Theorem \ref{th:K1}. All throughout, we   let $m \in \{1,\ldots, \lfloor\frac{n}{2}\rfloor\}$, and   consider  the  complete bipartite graph with parts $|\cC_1|=m$ and  $|\cC_2|=n-m$. In this context,  $\cL$ in \eqref{eq:generator-avg} reads as follows:
\begin{equation}
	\cL f(\eta)= \sum_{x\in \cC_1}\sum_{y \in \cC_2}\left(f(\eta^{xy})-f(\eta)\right)\ ,\qquad \eta \in \Delta(V)\ , \ f:\Delta(V)\to \R\ .
\end{equation}

The proof of Theorem \ref{th:K1} is divided into several steps. Following \cite{chatterjee2020phase}, we use the discretized dynamics defined as follows.  
Define	
\begin{align}\label{eq:H}
H&\coloneqq \lfloor  \log_2 n - \log(n)^{1/3} \rfloor \  .	
\end{align}
We divide a unit mass into $2^H$ chunks. To each chunk we assign  a label $u \in \{1,\ldots, 2^H\}$, a position in $V$, and a mass equal to $2^{-H}$. 
We add a cemetery state  $\{\dagger\}$ and define a dynamics of these mass chunks as follows.
For every chunk $u=1,\ldots, 2^H$, we let $e_t(u) \in V\cup \{\dagger\}$ denote its position at time $t\ge 0$, and introduce two sequences  $(\beta_t(u))_{t\ge 0}$ and $(\alpha_t(u))_{t\ge 0} \subset \N_0$, with	  $\beta_0(u)= \alpha_0(u)=0$.
We write $(w_t(x))_{x\in V\cup \{\dagger\}}$ for the amount of mass at $x$  at time $t$, that is 
\begin{equation}
w_t(x)\coloneqq \sum_{u =1}^{2^H} 2^{-H} \IND_{e_t(u) =x}\ ,\qquad x \in V\cup \{\dagger\}\ ,\  t \ge 0\,.
\end{equation}
The dynamics is given by the following process:
\begin{enumerate}
\item all chunks are initially placed on the same site $x_0\in V$: $e_0(u)= x_0 , \;\forall u$;
\item the process follows the same edge updates of the averaging process: each edge rings independently with rate 1;
\item when edge $xy$ rings at time $t$, two types of scenarios can occur:
\begin{enumerate}
	\item good scenarios: 
	\begin{enumerate}
		\item there are no chunks at the endpoints of $xy$, and nothing occurs, or
		\item exactly  one of the endpoints, say $y$, of the edges is empty, and $x$ contains $2^i$ chunks with $i>0$; in this case half of the chunks on $x$ are selected uniformly at random and moved to $y$; the other half remains at $x$; concurrently, for all chunks $u$ sitting at either $x$ or $y$, we set $\alpha_t(u)\coloneqq \alpha_{t^-}(u)+1$; 	
	\end{enumerate}
	\item bad scenarios:
	\begin{enumerate}
		\item $\beta$-event: if both $w_{t^-}(x)>0$ and $w_{t^-}(y)>0$, then set   
		\begin{equation}\beta_{t'}(u)= 1\ ,\qquad e_{t'}(u)= \dagger\ ,
		\end{equation}
		for all $ t'\ge  t$, and all $u=1,\ldots, 2^H$ such that $e_{t^-}(u)\in \{x,y\}$. In particular, $w_t(x)=w_t(y)=0$, and $w_t(\dagger)=w_{t^-}(\dagger)+w_{t^-}(x)+w_{t^-}(y)$. 
		\item  $\alpha$-event: if exactly  one of the extremes, say $y$, of the edges is empty, and $x$ contains exactly one chunk $u$, i.e.\ $\alpha_{t^-}(u)=H$, then set, for all $t'\ge t$,
		\begin{equation}
			\alpha_{t'}(u)=H+1\ ,\qquad	e_{t'}(u)=\dagger\ .
		\end{equation}
	\end{enumerate}
\end{enumerate}
\end{enumerate}
In words, the discretized mass splits exactly along an edge if the proposed edge is not occupied at both ends, while if both ends are occupied all mass chunks from that edge are sent to the cemetery. Moreover, if a  single mass chunk sits at one of the endpoints of the proposed edge, then it is sent to the cemetery. 
By construction, the following properties hold a.s.:
\begin{enumerate}
\item the total mass of the chunks   is preserved by the dynamics, i.e.,
\begin{equation}
	w_t(\dagger)+	\sum_{x\in V} w_t(x)=1\ ,\qquad t \ge 0, 
\end{equation}
and the mass at $\dagger$ increases when a bad scenario occurs. 
\item if $(\eta_t)_{t\ge 0}$ denotes the Averaging process with all the mass initially placed at $x_0\in V$, then
\begin{equation}
	\eta_t(x)\ge w_t(x)\ ,\qquad x \in V\ ,\ t \ge 0\ ;
\end{equation}
\item the process $(w_t)_{t\ge 0}$ eventually sends all chunks to $\dagger$, i.e.,
\begin{equation}
	\lim_{t\to \infty} w_t(\dagger)=1\ .
\end{equation}
\item for any chunk not in $\dagger$ at time $t\ge 0$, the variable $\alpha$ satisfies 
\begin{equation}
	w_t(e_t(u)) = 2^{-\alpha_t(u)}\ ,\qquad \forall  u :\,\;  e_t(u)\neq\dagger \ .
\end{equation}
\end{enumerate}

We start with a technical lemma, and all throughout adopt the  following notation (cf.\ \eqref{eq:T-K1}):
\begin{gather}\label{eq:deftmixpm}
	t_{\rm mix}\coloneqq \frac{1}{2\gamma}\log_2 n\ , \qquad \gamma\coloneqq \frac{m(n-m)}{n}\ ,\\
	t^\l_{\rm mix}\coloneqq  t_{\rm mix}+\frac{\l}{\gamma} \sqrt{\log n}\ ,\qquad \l\in\R\ .
\end{gather}

\begin{lemma}\label{lem:technical1}
For every $\varepsilon>0$, there exist constants $\l=\l(\varepsilon)>0$ and $b=b(\varepsilon)>0$ such that for all $u=1,\ldots, 2^H$,
\begin{align}\label{eq:claim-technical-lemma}
\liminf_{n\to \infty} \Prob\left( \log_2(n)-b\sqrt{\log n} \le \alpha_{t^{-\l}_{\rm mix}}(u)\le H\,,\, \beta_{t^{-\l}_{\rm mix}}(u)=0 \right) \ge1-\varepsilon.
\end{align}
\end{lemma}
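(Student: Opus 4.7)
The plan is to bound by $\varepsilon/3$ each of the three failure events $\{\beta_{t^{-\lambda}_{\rm mix}}(u)=1\}$, $\{\alpha_{t^{-\lambda}_{\rm mix}}(u) > H\}$, and $\{\alpha_{t^{-\lambda}_{\rm mix}}(u) < \log_2 n - b \sqrt{\log n}\}$, and to conclude via a union bound. The starting observation is that, conditional on the global history of the discretized dynamics, $\alpha_t(u)$ is the count of a point process whose rate at time $t$ depends on the chunk's current side: it is $n-m-s_2(t)$ if $e_t(u)\in \cC_1$ and $m-s_1(t)$ if $e_t(u)\in \cC_2$, where $s_i(t):=|S_t \cap \cC_i|$ with $S_t:=\{x\in V:w_t(x)>0\}$. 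At each split, the chunk switches side with probability $1/2$. Ignoring the $s_i$-corrections, the chunk's position on $\{\cC_1, \cC_2\}$ evolves as a $2$-state Markov chain with jump rates $n-m$ and $m$, stationary distribution $\pi_1 = m/n,\ \pi_2 = (n-m)/n$, and stationary mean rate of $\alpha$-growth equal to
\begin{equation*}
\pi_1(n-m) + \pi_2 m = \frac{2m(n-m)}{n} = 2\gamma.
\end{equation*}
Thus the natural centering for $\alpha_t(u)$ is $2\gamma t$, which at $t=t^{-\lambda}_{\rm mix}$ equals exactly $\log_2 n - 2\lambda \sqrt{\log n}$, matching the target window.

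For the upper bound $\alpha_{t^{-\lambda}_{\rm mix}}(u)\le H$, I would couple the true $\alpha_t(u)$ to an idealized count $\tilde \alpha_t$ with constant rates $n-m$ in $\cC_1$ and $m$ in $\cC_2$, so that $\alpha_t(u)\le \tilde \alpha_t$ almost surely. A CLT/martingale concentration for $\tilde \alpha_t$, an additive functional of the $2$-state chain, gives $|\tilde \alpha_t - 2\gamma t|\le C_\varepsilon \sqrt{\log n}$ at $t=t^{-\lambda}_{\rm mix}$ with probability at least $1-\varepsilon/3$. Since $\sqrt{\log n}\gg (\log n)^{1/3}$, this forces $\alpha_{t^{-\lambda}_{\rm mix}}(u)\le \log_2 n - (2\lambda - C_\varepsilon)\sqrt{\log n}<H$ for every fixed $\lambda>C_\varepsilon$ and $n$ large enough.

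For the lower bound $\alpha_{t^{-\lambda}_{\rm mix}}(u)\ge \log_2 n - b\sqrt{\log n}$, the key is to show that the $s_i$-corrections slow down $\alpha$ by only $o(\sqrt{\log n})$. A bootstrap argument exploiting mass conservation $\sum_{x\in V} w_t(x) \le 1$ (each alive chunk at vertex $x$ contributes $w_t(x)=2^{-\alpha_t(u)}$), combined with the upper-bound CLT applied simultaneously to all $\le 2^H = O(n)$ chunks via a union bound, shows that, uniformly in $t\le t^{-\lambda}_{\rm mix}$,
\begin{equation*}
\frac{s_1(t)}{m} + \frac{s_2(t)}{n-m} = O\bigl(2^{-2(\lambda-C_\varepsilon)\sqrt{\log n}}\bigr)
\end{equation*}
with probability at least $1-\varepsilon/3$. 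Integrating the corresponding rate deficit over $[0,t^{-\lambda}_{\rm mix}]$ gives $\tilde \alpha_t - \alpha_t = O(\sqrt{\log n})$ in probability, which combined with the matching lower tail of $\tilde \alpha_t$ from the CLT yields the desired lower bound with $b = 2\lambda + O(1)$.

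Finally, the chunk's personal $\beta$-rate at time $t$ equals the number of occupied vertices in the opposite part: $s_2(t)$ if $e_t(u) \in \cC_1$ and $s_1(t)$ if $e_t(u) \in \cC_2$. Using the bootstrap bound above, the expected number of $\beta$-events affecting $u$ on $[0,t^{-\lambda}_{\rm mix}]$ is $o(1)$ as $\lambda\to \infty$, uniformly in $m\in\{1,\dots,\lfloor n/2 \rfloor\}$: the key point is that the exponential gain $2^{-2\lambda \sqrt{\log n}}$ coming from the slack of $t^{-\lambda}_{\rm mix}$ below $t_{\rm mix}$ dominates all polynomial factors in $m$ and $n-m$. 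The main technical obstacle throughout is the circular coupling between single-chunk statistics and the global quantities $s_1, s_2$; I would resolve it by defining a good event on which the bootstrap bounds hold, and running the argument under a stopping time at which either this good event is violated or $t^{-\lambda}_{\rm mix}$ is reached.
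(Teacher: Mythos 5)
The bootstrap estimate $\frac{s_1(t)}{m} + \frac{s_2(t)}{n-m} = O\bigl(2^{-2(\lambda-C_\varepsilon)\sqrt{\log n}}\bigr)$, which you claim holds uniformly over $t\le t^{-\lambda}_{\rm mix}$ with probability $\ge 1-\varepsilon/3$, is the step that fails, and it is the load-bearing input to both your $\alpha$-lower bound and your $\beta$-estimate. Take $m=1$ (the star graph): then $s_1(t)/m = s_1(t)\in\{0,1\}$, and $s_1(t)=1$ on a nonnegligible set of times (e.g., immediately after any split of a leaf onto the center), where the left-hand side is $\ge 1\gg 2^{-2\lambda\sqrt{\log n}}$. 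The same happens for any $m=o(n)$: a single occupied vertex in $\cC_1$ already makes $s_1/m\ge 1/m$. So the good event on which your bound holds uniformly has probability tending to $0$, not to $1-\varepsilon/3$, and stopping the process at the first violation does not help — you would be stopping almost immediately. A secondary issue is that the union bound over $\approx 2^H$ chunks would force your CLT to control deviations of order $\log n$ rather than $C_\varepsilon\sqrt{\log n}$, so the claimed precision is unattainable even for the unweighted count $s_1+s_2$.

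The paper handles the two affected steps by different, and more robust, arguments. For $\alpha$ there is in fact nothing to correct for: on the event $\{\beta_t(u)=0,\ \alpha_t(u)\le H\}$ the quantity $\alpha_t(u)$ \emph{equals} the number of edge rings incident to $e_\cdot(u)$ crossing to the opposite part — i.e., your idealized $\tilde\alpha_t$ at rates $n-m$ and $m$ — because on that event every such attempt is a good split. So the rate-deficit integral vanishes identically, and one only needs the CLT for $\tilde\alpha_t$ (realized in the paper as the $(Z_i,Y_{i,j})$ renewal construction) together with an \emph{a priori} bound on $\Prob(\beta_t(u)=1)$. For $\beta$ there is no such shortcut, and the crux is showing that at each attempt time $s$ when the chunk in $\cC_2$ talks to a given $x\in\cC_1$, the probability that $x$ is occupied is small. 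A uniform-in-time bound on $s_1$ is the wrong tool; the paper instead decomposes $\{w_{s^-}(x)>0\}$ into two events on a short window $[s-a,s]$: either $x$ received mass in that window (rate at most $2^H$, so probability $\lesssim 2^H a$), or $x$ received no mass and had fewer than $H$ updates (a Poisson tail at rate $\ge (n-m)a$, since $\ge H$ mass-free updates would have drained $x$). Choosing $a=(H2^H)^{-1}$ makes both events rare. This observation — that any occupied vertex of $\cC_1$ must have been fed very recently, i.e., is ``transient'' — is the ingredient your argument is missing, and no mass-conservation bookkeeping replaces it.
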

The proof of Lemma \ref{lem:technical1}  is postponed to the end of this section.
Next, we assume its validity and finish the proof of the theorem through the following two propositions.

\begin{proposition}[Lower bound]
For every $\varepsilon > 0$, there exists $\l=\l(\varepsilon)>0$ such that for every $x_0\in V$
\begin{align}
\liminf_{n\to \infty} \E_{\IND_{x_0}}\left[\left\|\frac{\eta_{t^{-\l}_{\rm mix}}}{\pi}-1 \right\|_{1} \right] \ge 2-\varepsilon\ .
\end{align}
\end{proposition}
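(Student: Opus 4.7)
The plan is to invoke Lemma \ref{lem:technical1} to identify, for each chunk $u$ of the discretized dynamics and each $\varepsilon>0$, the \emph{good event}
\begin{equation}
G_u := \big\{\beta_{t^{-\lambda}_{\rm mix}}(u)=0\big\}\cap\big\{\log_2 n - b\sqrt{\log n}\le \alpha_{t^{-\lambda}_{\rm mix}}(u)\le H\big\},
\end{equation}
which by the lemma satisfies $\liminf_n\Prob(G_u)\ge 1-\varepsilon$ for an appropriate $\lambda=\lambda(\varepsilon)>0$. I would then combine this with the coupling inequality $\eta_t\ge w_t$ inherited from the chunk construction, together with the elementary variational bound $\|\eta/\pi-1\|_1\ge 2(\eta(S)-\pi(S))$ for arbitrary test sets $S\subset V$, applied to
\begin{equation}
S := \big\{e_{t^{-\lambda}_{\rm mix}}(u) : G_u \text{ holds}\big\} \subset V .
\end{equation}

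A structural remark I want to exploit is that, by the splitting rule of the chunk dynamics, any two alive chunks occupying the same vertex share a common value of $\alpha$; hence a vertex belongs to $S$ if and only if \emph{all} its chunks are good, leading to the clean identity $w_{t^{-\lambda}_{\rm mix}}(S)=2^{-H}\,\#\{u:G_u\text{ holds}\}$. Symmetry of the initial condition $\IND_{x_0}$ makes $\Prob(G_u)$ independent of $u$, so linearity of expectation gives $\E[w_{t^{-\lambda}_{\rm mix}}(S)]\ge 1-\varepsilon$ for all $n$ large. On the other hand, since each vertex in $S$ hosts at least one chunk and there are only $2^H$ chunks in total, $|S|\le 2^H$ deterministically, so by the definition \eqref{eq:H} of $H$, $\pi(S)\le 2^H/n\to 0$.

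Combining the two estimates I obtain
\begin{equation}
\E_{\IND_{x_0}}\!\left[\left\|\frac{\eta_{t^{-\lambda}_{\rm mix}}}{\pi}-1\right\|_1\right] \ge 2\,\E\big[w_{t^{-\lambda}_{\rm mix}}(S)\big]-2\pi(S) \ge 2(1-\varepsilon)-o(1),
\end{equation}
and a relabeling $\varepsilon\mapsto \varepsilon/3$ (with the corresponding $\lambda$ from Lemma \ref{lem:technical1}) produces the claimed bound $\ge 2-\varepsilon$. The hard part of the overall argument is packed entirely into Lemma \ref{lem:technical1}, whose proof must control simultaneously: (i) the $\beta$-events, arising from collisions of the supports of two mass chunks, up to the critical time $t_{\rm mix}^{-\lambda}$; (ii) the late $\alpha$-events, which annihilate an isolated chunk that has already reached level $H$; and (iii) the typical position of the counter $\alpha$, which must lie in a $\sqrt{\log n}$-window around $\log_2 n$ at time $t_{\rm mix}^{-\lambda}$. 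Once this lemma is granted, the lower bound on the expected $L^1$-distance reduces to the short deterministic computation above, resting only on the coupling $\eta_t\ge w_t$ and on the pigeonhole bound $|S|\le 2^H$.
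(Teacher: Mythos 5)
Your proposal is correct and takes essentially the same route as the paper: apply the coupling $\eta_t\ge w_t$, a total-variation lower bound over a random test set of at most $2^H$ occupied vertices so that $\pi(S)\le 2^H/n=o(1)$, and Lemma~\ref{lem:technical1} via chunk-label symmetry to get $\E[w_t(S)]\ge 1-\varepsilon$. The only cosmetic difference is that the paper writes $\tfrac12\|\eta_t/\pi-1\|_1=\sum_x(\eta_t(x)-1/n)_+$ and restricts to $A=\{x:w_t(x)\ge 1/n\}$ (equivalently, to chunks with $e_t(u)\ne\dagger$, dropping the unneeded lower bound on $\alpha$), whereas you use the equivalent bound $\|\eta/\pi-1\|_1\ge 2(\eta(S)-\pi(S))$ on the slightly smaller set $S$ of sites hosting good chunks.
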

\begin{proof}  In view of the definition and properties of the process $(w_t)_{t\ge 0}$, setting $t=t^{-\l}_{\rm mix}$,
\begin{align}
\frac{1}{2}\,\E_{\IND_{x_0}}\left[\left\|\frac{\eta_t}{\pi}-1 \right\|_{1} \right]
&= \sum_{x \in V}\E_{\IND_{x_0}}\left[\left(\eta_t(x)-\frac{1}{n}\right)_+ \right]\\
& \ge \sum_{x \in V}\E_{\IND_{x_0}}\left[\left(\eta_t(x)-\frac{1}{n}\right)_+ \IND_{\{w_t(x)\ge \frac{1}{n}\}} \right]\\
&\ \ge \sum_{x \in V}\E_{\IND_{x_0}}\left[\left(w_t(x)-\frac{1}{n}\right) \IND_{\{w_t(x)\ge \frac{1}{n}\}} \right]\\
& = 2^{-H}\sum_{u=1}^{2^H}\Prob\left(e_t(u)\neq\dagger\right)- \frac{1}
{n}\sum_{x\in V} \E_{\IND_{x_0}}\left[\IND_{\{w_t(x)\ge \frac{1}{n}\}}\right]\\
&  \ge \Prob\left(\alpha_t(u_0)\le H, \beta_t(u_0)=0\right) -\frac{2^H}n\ ,
\end{align}
where $u_0$ is any fixed chunk label, and we use  symmetry and the fact that at most $2^H$ sites can be non empty at any given time. Finally, since $2^H=o(n)$, Lemma \ref{lem:technical1} concludes the proof.	
\end{proof}

\begin{proposition}[Upper bound]
For every $\varepsilon > 0$, there exists $\l=\l(\varepsilon)>0$
such that for every $x_0\in V$
\begin{align}
\limsup_{n\to \infty} \E_{\IND_{x_0}}\left[\left\|\frac{\eta_{t^\l_{\rm mix}}}{\pi}-1 \right\|_{1} \right] \le \varepsilon\ .
\end{align}
\end{proposition}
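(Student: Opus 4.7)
The plan is to combine the chunk dynamics introduced in Section \ref{sec:complete-bipartite-l1} with a smoothing argument, in the spirit of \cite{chatterjee2020phase} for the complete graph. The idea is that by time $t^\l_{\rm mix}$ with $\l$ large, essentially all chunks have been sent to the cemetery, and the cemetery masses end up distributed on $V$ in an approximately uniform way thanks to the vertex-transitivity of $K_{m,n-m}$ within each part $\cC_1,\cC_2$.

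\textbf{Step 1 (the cemetery is almost full at time $t^\l_{\rm mix}$).} I first establish a counterpart of Lemma \ref{lem:technical1} in the opposite direction: for every $\varepsilon>0$ there exists $\l=\l(\varepsilon)>0$ such that, for all chunks $u=1,\ldots,2^H$,
\begin{align}
\limsup_{n\to\infty}\Prob\left(\alpha_{t^\l_{\rm mix}}(u) \le H\right) \le \varepsilon/4.
\end{align}
The argument mirrors the proof of Lemma \ref{lem:technical1}: the typical growth rate of $\alpha_t(u)$ is $2\gamma$, so the expected value of $\alpha_{t^\l_{\rm mix}}(u)$ is $\log_2 n + 2\l\sqrt{\log n}+O(1)$, while its fluctuations are of order $\sqrt{\log n}$; hence the probability that $\alpha_{t^\l_{\rm mix}}(u)$ is below $H = \log_2 n - (\log n)^{1/3}+O(1)$ vanishes as $\l\to\infty$ by a CLT-type argument. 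Together with Markov inequality applied to the total surviving mass, with probability at least $1-\varepsilon/4$ one has $w_{t^\l_{\rm mix}}(\dagger) \ge 1-\varepsilon/4$.

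\textbf{Step 2 (the cemetery mass is approximately uniform).} In the real averaging process, each chunk's mass (at most $2^{-H} \approx n^{-1}2^{(\log n)^{1/3}}$), upon being absorbed by the cemetery at some time $\tau_u$, is deposited on a specific vertex $e_{\tau_u^-}(u)$ and then continues to be redistributed by subsequent edge updates. By the vertex-transitivity of $K_{m,n-m}$ within each part and the exchangeability of the Poisson clocks, the marginal law of each deposition vertex is uniform on one of $\cC_1,\cC_2$, and the additional smoothing after $\tau_u$ spreads the deposited mass further across both parts. A second-moment analysis of the random sum $\sum_{u:\tau_u\le t^\l_{\rm mix}} 2^{-H}\IND_{e_{\tau_u^-}(u)=x}$, followed by Chebyshev and a union bound over $x\in V$, should then yield
\begin{align}
\max_{x\in V}\Big|\eta_{t^\l_{\rm mix}}(x)-\tfrac{1}{n}\Big|=o\bigl(\tfrac{1}{n}\bigr)\qquad\text{with probability at least } 1-\varepsilon/4.
\end{align}
Combined with Step 1, this gives $\|\eta_{t^\l_{\rm mix}}-\pi\|_1\le \varepsilon$ with probability at least $1-\varepsilon/2$, so $\E_{\IND_{x_0}}[\|\eta_{t^\l_{\rm mix}}/\pi-1\|_1]\le \varepsilon + 2\cdot (\varepsilon/2)=2\varepsilon$, which after rescaling $\varepsilon$ gives the claim.

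\textbf{Main obstacle.} The delicate point is the concentration in Step 2: the deposition vertices $\{e_{\tau_u^-}(u)\}_u$ are correlated through shared edge clocks and through the fact that multiple chunks may occupy a common vertex just before being absorbed via a $\beta$-event. The covariances must therefore be controlled sharply, and this is where the bipartite structure enters non-trivially: the exchangeability only holds within each part, so one must separately bound within-part and cross-part covariances, the latter requiring a two-particle coupling argument on $K_{m,n-m}$ in the spirit of Proposition \ref{pr:expressions-L2-norm}. Only after the covariance bound is established does the symmetric second-moment computation deliver the uniform estimate claimed above.
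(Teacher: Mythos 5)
Your approach diverges substantially from the paper's, and unfortunately the central claim in Step~2 cannot hold. You want to show that at time $t^\l_{\rm mix}$ one has $\max_{x\in V}|\eta_{t^\l_{\rm mix}}(x)-\tfrac1n|=o(1/n)$ with high probability. This is an $L^\infty$-estimate; in particular it would immediately yield $\E_{\IND_{x_0}}[\|\eta_{t^\l_{\rm mix}}/\pi-1\|_2^2]=o(1)$, i.e.\ $L^2$-mixing at time $t^\l_{\rm mix}$. But Theorem~\ref{th:K2} (and the comparison in Remark~\ref{rem:cutoff-trel}, Tables~\ref{table:1}--\ref{table:2}) shows that the $L^2$-mixing time of ${\rm Avg}(K_{m,n-m})$ is \emph{strictly larger} than the $L^1$-mixing time, by a constant factor in front of $\log n$. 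Concretely, at $t=t^\l_{\rm mix}$ one finds $\E_{\IND_{x_0}}[\|\eta_t/\pi-1\|_2^2]\approx n\,e^{-\theta m t}=n^{\delta}$ for some $\delta>0$ depending on $\lim m/n$; the $L^2$-norm is polynomially large, not small. So the concentration you are asking for in Step~2 is provably false at this time scale, no matter how sharply one controls the covariances of the deposition vertices.

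There is also a secondary issue in Step~2: you assert that each chunk is deposited with mass at most $2^{-H}$. That is only true for $\alpha$-event deaths. A chunk killed by a $\beta$-event is removed with its \emph{current} mass $2^{-\alpha_t(u)}$, which can be much larger than $2^{-H}$ (indeed up to order one if the $\beta$-event occurs early). The chunk dynamics $(w_t)$ is a bookkeeping lower bound $\eta_t\ge w_t$; the ``cemetery mass'' is simply the portion of $\eta_t$ that the discretization has stopped tracking, not a quantity that gets deposited and then smoothed in any simple way.

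The paper instead avoids tracking where the lost mass ends up. It stops the chunk dynamics at a time $T^-=t_{\rm mix}^{-c}$ \emph{before} the $L^1$-cutoff, at which point (by Lemma~\ref{lem:technical1}) most chunks are still alive and already have small mass $\le 2^{b\sqrt{\log n}}/n$; it then truncates away the heavy chunks to form a sub-probability configuration $\tilde\eta_{T^-}$ with $\|\tilde\eta_{T^-}\|_\infty \le 2^{b\sqrt{\log n}}/n$ and $\E[|\tilde\eta|]\ge 1-\varepsilon$, runs the actual averaging dynamics on $\tilde\eta$ from $T^-$ onward, and invokes Proposition~\ref{prop:AL} (Aldous--Lanoue $L^2$-contraction) over the window of length $(c+\l)\sqrt{\log n}/\gamma$ to kill the $2^{b\sqrt{\log n}}$ factor. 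This carefully sidesteps the need for any pointwise estimate on $\eta_t$: the $L^2$-norm of $\tilde\eta_t/(|\tilde\eta|\pi)-1$ does not need to become small in absolute terms at time $t^\l_{\rm mix}$; it only needs to contract enough relative to its value at $T^-$, and Cauchy--Schwarz is then enough to conclude in $L^1$. Your Step~1 is in the right spirit (a CLT-type bound in the opposite direction of Lemma~\ref{lem:technical1}), but by itself it cannot substitute for the $T^-$-truncation plus $L^2$-contraction mechanism, which is what really closes the argument.
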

\begin{proof}
Fix constants $c,b>0$ to be specified later, and let $T^-:=t_{\rm mix}^{-c}$ be defined as in \eqref{eq:deftmixpm}. Given the processes $(\eta_t)$ and $(w_t)$, we define a third process $(\tilde \eta_t)$ as follows:
\begin{enumerate}
\item for times $t< T^-$, $\tilde \eta_t = w_t$;
\item for times $t\ge  T^-$, we let $(\tilde \eta_t)$ evolve as an Averaging process, following the same edge updates of $(\eta_t)$, but starting at time $T^-$ from the following configuration (possibly, with total mass strictly less than one): for every $x\in V$,
\begin{equation}
	\tilde \eta_{T^-}(x)\coloneqq \begin{cases}
		w_{T^-}(x) &\text{if}\ w_{T^-}(x)\le \frac{2^{b\sqrt{\log n}}}{n}\\
		0 &\text{otherwise}\ 	;	\end{cases}
\end{equation}
\item define $|\tilde \eta|:= \sum_{x\in V}\tilde \eta_{T^-}(x)\in [0,1]$.
\end{enumerate}

In view of this construction, for all $t\geq T^-$,
\begin{align}
&	\E_{\IND_{x_0}}\left[\left\|\frac{\eta_t}{\pi}-1 \right\|_{1} \right]\\
& \le \sum_{x\in V}\E\left[\left|\eta_t(x)-\frac{1}{n}-\left(\tilde \eta_t(x)-\frac{|\tilde \eta|}{n}\right) \right| \right] +
\sum_{x\in V} \E\left[\left|\tilde \eta_t(x)-\frac{|\tilde \eta|}{n}\right| \right] \\
&\le  1-\E\left[|\tilde \eta| \right]+ \sum_{x\in V}\E\left[\eta_t(x)-\tilde \eta_t(x) \right]  + \E\left[|\tilde \eta|\left\|\frac{\tilde \eta_t}{|\tilde \eta|\pi}-1 \right\|_{1} \right]\ ,
\end{align}
where we use the triangle inequality twice   and the fact that $\eta_t(x)\ge \tilde \eta_t(x)$ a.s.. 
As a consequence,
\begin{align}\label{eq:ub-star}
\E\left[\left\|\frac{\eta_t}{\pi}-1 \right\|_{1} \right]\le 2\left(1-\E\left[ |\tilde\eta|\right] \right) + \E\left[|\tilde \eta|\left\|\frac{\tilde \eta_t}{|\tilde \eta|\pi}-1 \right\|_{1} \right]\ .
\end{align}
The first term on the right-hand side of \eqref{eq:ub-star} is at most $2\varepsilon$ if the constants $c=c(\varepsilon)$, where $c$ is the constant appearing in $T^-=t_{\rm mix}^{-c}$, and $b=b(\varepsilon)$ are large enough. Indeed,
by Lemma \ref{lem:technical1},
\begin{align}
\E\left[|\tilde \eta| \right]= 2^{-H}\sum_{u=1}^{2^H} \Prob\left( \log_2(n)-b\sqrt{\log n} \le \alpha_{T^-}(u)\le H\,,\, \beta_{T^-}(u)=0 \right) \ge 1-\varepsilon\,.
\end{align}
As for the second term on the right-hand side of \eqref{eq:ub-star}, letting $\cF_s$ denote the $\si$-algebra generated by the edge updates up to time $s$, the Markov property yields
\begin{align}
\E\left[|\tilde \eta|\left\| \frac{\tilde \eta_t}{|\tilde \eta|\pi}-1\right\|_{1} \right]&= \E\left[|\tilde \eta|\E\left[\left\| \frac{\tilde \eta_t}{|\tilde \eta|\pi}-1\right\|_{1} \bigg| \cF_{T^-}\right] \right]\\
&\le   \E\left[|\tilde \eta|\sqrt{\E\left[\left\| \frac{\tilde \eta_t}{|\tilde \eta|\pi}-1\right\|_{2}^2 \bigg| \cF_{T^-}\right]} \right]\\
&\le  \E\left[|\tilde \eta|\sqrt{e^{-(t-T^-)/t_{\rm rel}} \left\|\frac{\tilde \eta_{T^-}}{|\tilde \eta|\pi}-1 \right\|_{2}^2} \right]\ .
\end{align}
Above we used Cauchy-Schwarz inequality and Aldous-Lanoue's $L^2$-upper bound from time  $T^-$ to $t\geq T^-$.
By definition of the process $(\tilde \eta_t)$,
\begin{align}
\left\|\frac{\tilde \eta_{T^-}}{|\tilde \eta|\pi}-1 \right\|_{2} \le 	 \left\|\frac{\tilde \eta_{T^-}}{|\tilde \eta|\pi} \right\|_{2} 
\le \left\| \tilde \eta_{T^-}\right\|_{\infty} \frac{n}{|\tilde \eta|}\le \frac{2^{b\sqrt{\log n}}}{|\tilde \eta|}\,.
\end{align}
Finally, applying this with $t=t^\l_{\rm mix}$ and recalling that $t_{\rm rel} = 2/m$, 
\begin{align}
\E\left[|\tilde \eta|\left\| \frac{\tilde \eta_{t^\l_{\rm mix}}}{|\tilde \eta|\pi}-1\right\|_{1} \right]&\le \exp\left(\frac{-(t^\l_{\rm mix}-T^-)}{2t_{\rm rel}}\right) 2^{b\sqrt{\log n}}\,  \E\left[\sqrt{|\tilde \eta|}  \right]\\
&\le 
\exp\left(-\frac{n}{n-m}\frac{c+\l}{4}\sqrt{\log n}\right) 2^{b\sqrt{\log n}}\\
&\le \exp\left( \left(b\log 2-\frac{c+\l}{4}\right)\sqrt{\log n} \right).
\end{align}
For all $\varepsilon>0$ and $b,c>0$, by choosing $\l>0$ sufficiently large, the right-hand side is bounded above by $\varepsilon$ for all $n$ large enough. 
This concludes the proof.
\end{proof}

\begin{proof}[Proof of Lemma \ref{lem:technical1}]
Set $t:=t^{-c}_{\rm mix}$ and fix $u\in \{1,\ldots, 2^H\}$. 
We start by proving that, for every $c>0$,
\begin{equation}\label{eq:claim-beta}
\lim_{n\to \infty}	\Prob(\beta_t(u)=0)=1\ .	
\end{equation}
Suppose the chunk $u$ starts in $\cC_2$ and note that the hitting time $\tau_1$ of the set $\cC_1$ for $u$ is ${\rm Exp}(m/2)$. Once the chunk is in $\cC_1$ it will reach $\cC_2$ at some later time, and will visit again $\cC_1$ at some time $\tau_2$. Iterating, we define the successive arrival times to $\cC_1$ as $0<\tau_1<\tau_2<\dots$. 
Then, by neglecting the time spent in $\cC_1$ each time, we can estimate 
\begin{equation}\label{eq:bound-tau}
\Prob(\tau_j < t )\le \Prob\left(\sum_{i=1}^j X_i <t \right)\ ,\qquad j \in \N\ ,
\end{equation}
where $X_i$ are i.i.d.\ ${\rm Exp}(m/2)$. If instead $u$ starts in $\cC_1$ then $\tau_1=0$ and the  
inequality in \eqref{eq:bound-tau} holds with the sum on the right hand side going up to  $j-1$ instead of $j$. 
Therefore, recalling that $t=O(m ^{-1}\log n)$,  taking e.g.\ $j=\log^2(n)$ in \eqref{eq:bound-tau} we have
\begin{equation}\label{eq:bound-number_visits}
\Prob(u\ \text{hits $\cC_1$ more than $\log^2(n)$ times before $t$} )=o(1)\ .
\end{equation}
Let us consider the event $E_i$ that after the $i$-th hitting of $\cC_1$, $u$ experiences a $\b$-event before moving to $\cC_2$ or to $\dagger$ because of an $\a$-event.  We are going to show that
uniformly in $i\ge1$, \begin{equation}\label{eq:T-timeo}
\log^2(n)\Prob(E_i )=o(1).
\end{equation}
In view of \eqref{eq:bound-number_visits}, by a union bound this implies 
\begin{align}\label{eq:beta-event-on-center}
\Prob\left(\exists\, s\le t\ \text{s.t.}\ e_{s^-}(u)\in\cC_1\, ,\, \beta_{s}(u)=1\right)=o(1).
\end{align}
To prove \eqref{eq:T-timeo}, observe that, while in $\cC_1$, $u$ experiences attempts to average with other vertices at a rate $n-m$, and that at any attempt one has the following options:\begin{enumerate}
\item $u$ remains in $\cC_1$ with probability  $q_0\le 1/2$;
\item
$u$ moves to $\dagger$ because of a $\b$-event with probability $q_1\le 2^H/(n-m) \eqqcolon p=O( 2^{-\log^{1/3}n})$, since at any given time there are at most $2^H$ vertices with positive mass;
\item $u$ moves to $\cC_2$, or to $\dagger$ because of an $\a$-event, with probability $1- q_0-q_1\ge \frac13$. 
\end{enumerate}In particular, the overall number of attempts before moving to $\cC_2$ or to $\dagger$ is dominated by a geometric random variable with parameter $1/2$. 
Thus, a union bound shows that $\Prob(E_i)\le 6p= O( 2^{-\log^{1/3}n})$, which proves \eqref{eq:T-timeo}.

In order to prove \eqref{eq:claim-beta}, we are left with proving 
\begin{align}\label{eq:beta-event-on-c2}
\Prob\left(\exists\, s\le t\ \text{s.t.}\ e_{s^-}(u)\in\cC_2\, ,\, \beta_{s}(u)=1\right)=o(1).
\end{align}
To this end, observe that, while in $\cC_2$, $u$ experiences attempts to average with other vertices at a rate $m$, and thus by the argument in \eqref{eq:bound-number_visits} we may assume that there are at most $\log^2(n)$ such attempts up to time $t$.  Thus, letting $\Xi=\{0<\xi_1<\xi_2<\dots\}$ denote the set of random times at which these attempts occur, it suffices to show that uniformly in $s\le t$, 
\begin{align}\label{eq:beta-event-on-c22}
\log^2(n)\,	\Prob\left(e_{s^-}(u)\in\cC_2\, ,\, \beta_{s}(u)=1\,|\,s\in\Xi\right)=o(1).
\end{align}
To prove this, suppose $s\in\Xi$, say $s=\xi_i$ for some $i\in\bbN$. 
The event $e_{s^-}(u)\in\cC_2\, ,\, \beta_{s}(u)=1$ implies that the attempt involves $u$ and a vertex $x\in\cC_1$ with $w_s(x)>0$. Thus  
\begin{align}\label{eq:beta-event-on-c23}
&\Prob\left(e_{s^-}(u)\in\cC_2\, ,\, \beta_{s}(u)=1\,|\,s\in\Xi\right)\\
&\qquad = \sum_{x\in\cC_1} \Prob\left(\text{edge }(e_{s^-}(u),x)\text{ rings at time $s$}\,,\; w_{s^-}(x)>0\,|\,s\in\Xi\right)
\\
&\qquad = \frac1m\sum_{x\in\cC_1} \Prob\left(w_{s^-}(x)>0\,|\,s\in\Xi\,, \;\text{edge }(e_{s^-}(u),x)\text{ rings at time $s$}\,\right).
\end{align}
For any $a>0$, the event $w_{s^-}(x)>0$ implies that either $x$ has received mass within the time interval $[s-a,s]$, or $x$ has received no mass and experienced less than $H$ updates within the time interval $[s-a,s]$. Therefore,
\begin{align}\label{eq:beta-event-on-c24}
&\Prob\left(e_{s^-}(u)\in\cC_2\, ,\, \beta_{s}(u)=1\,|\,s\in\Xi\right)\\
&\quad \leq \max_{x\in\cC_1}\Prob\left(w_{s^-}(x)>0\,|\,s\in\Xi\,,\;\text{edge }(e_{s^-}(u),x)\text{ rings at time $s$}\right)\\&
\quad\le\max_{x\in\cC_1} \Prob\left(A_x\right) + \max_{x\in\cC_1}\Prob\left(B_x\right)\,,
\end{align}
where $A_x$ is the event that $x$ received mass within the time interval $[s^--a,s^-]$, and $B_x$ is the event that $x$ experienced less than $H$ updates within the time interval $[s^--a,s^-]$.
Recalling that there are at most $2^H$ vertices with positive mass,  one has
\begin{align}\label{eq:beta-event-on-c25}
\Prob\left(A_x\right) \le 2^H a\,.
\end{align}
On the other hand, the probability of less than $H$ updates at $x$ in $[s^--a,s^-]$ is bounded by the probability that a Poisson random variable with parameter $(n-m)a\ge na/2$ is less or equal than $H$. 
Thus,
\begin{align}\label{eq:beta-event-on-c26}
\Prob\left(B_x\right) \le \left(na/2\right)^{H+1} e^{- na/2}\,.
\end{align}
We can take $a= (H2^H)^{-1}$ so that $\Prob\left(A_x\right) =o(1)$ and $\Prob\left(B_x\right)\le 2^{\log ^2(n)}e^{-2^{\log ^{1/4}(n)}} = o(1)$.  This ends the proof of  \eqref{eq:claim-beta}.

\

To conclude the proof of the lemma we are left with estimating $\alpha_t(u)$, that is the number of good-scenario splittings involving the chunk labeled $u$ up to time $t$. 
We suppose $e_0(u)\in\cC_1$; the proof for the case $e_0(u)\in\cC_2$ is analogous.
We 
proceed as follows:
\begin{enumerate}
\item let $(Z_i)_{i\in \N}$ be i.i.d.\ ${\rm Geom}(1/2)$; each $Z_i \in \N$ indicates the number of attempts needed for  $u$ 
to succeed in moving to the other block;
\item given $(Z_i)_{i \in \N}$, let $( Y_{i,j})_{j=1}^{Z_i}$ be a  family of independent Exponential r.v.'s, with
\begin{equation}
	Y_{i,j} \sim \begin{cases}
		{\rm Exp}(n-m)&\text{if }i\text{ odd}\\
		{\rm Exp}(m)&\text{if }i\text{ even}\ ;
	\end{cases}
\end{equation} 
the r.v. $\sum_{j=1}^{Z_i} Y_{i,j}$ represents the  time $u$ spends  during the $i$-th visit to a  block before moving to the other block.
\end{enumerate}
Note that the random variable
\begin{equation}
K_t\coloneqq\sup\left\{k\ge 1\::\:\sum_{i=1}^k \sum_{j=1}^{Z_i}  Y_{i,j}\le t\right\}\:
\end{equation}
represents the number of switchings from a block to the other. Therefore, if $\beta_t(u)=0$, 
\begin{equation}
(H+1)\wedge 	\sum_{i=1}^{K_t} Z_i\le \alpha_t(u)\le \sum_{i=1}^{K_t+1} Z_i,
\end{equation}
where the $+1$ term in the second summation bounds the number of splittings that $u$ experiences in the time intercurring from its last change of block.

On the one hand, since for every $i\ge 1$
$$ \E\left[\sum_{j=1}^{Z_i}  Y_{i,j}+\sum_{j=1}^{Z_{i+1}}  Y_{i,j}\right]= \frac{2}{\gamma}\ ,$$
by the CLT, for every $\varepsilon>0$, there exists $\bar \zeta=\bar\zeta(\varepsilon)>0$ such that, for all $\zeta >\bar \zeta$ and for all $n$ large enough
\begin{align}\label{eq:ensu}
\Prob\left(\sum_{i=1}^{\gamma t+\zeta \sqrt{\log n}} \sum_{j=1}^{Z_i}  Y_{i,j}> t\, \right) \ge 1-\varepsilon\, .
\end{align}
Note that $\bar{\zeta}$ can be chosen independently of the constant $c$ in the definition of $t=t^{-c}_{\rm mix}$.
From \eqref{eq:ensu} it follows that 
\begin{equation}
\Prob\(K_t+1<\gamma t+\zeta \sqrt{\log n} \)\ge 1-\varepsilon\ .
\end{equation}
Hence,  
\begin{align}\label{eq:clt2}
\Prob\left(\alpha_t(u)\le H\,,\, \beta_t(u)=0 \right)
&\ge\Prob\left(\sum_{i=1}^{K_t+1} Z_i\le H\,,\, \beta_t(u)=0 \right)\\
&\ge \Prob\left(\sum_{i=1}^{K_t+1} Z_i\le H ,\: K_t+1\le \gamma t+\zeta \sqrt{\log n}  \right)-\varepsilon\,.
\end{align}
To finish the proof, observe that 
\begin{align}
\E\left[\sum_{i=1}^{\gamma t+\zeta \sqrt{\log n}}Z_i \right]&
= \log_2 n -2 \left(c-\zeta\right)\sqrt{\log n}\,.
\end{align}
Thus, 
by choosing $c=c(\zeta,\varepsilon)>0$ large enough, another application of the CLT shows that \eqref{eq:clt2} is at least $1-2\varepsilon$. 

For what concerns the lower bound on $\alpha_t(u)$, we proceed in a similar fashion. By the CLT there exists some $\bar\xi(\varepsilon)>0$ such that for all $\xi>\bar\xi(\varepsilon)$
\begin{align}
\Prob\left(\sum_{i=1}^{\gamma t-\xi \sqrt{\log n}} \sum_{j=1}^{Z_i}  Y_{i,j}\le t\, \right) \ge 1-\varepsilon\ ,\qquad 	\Prob\(K_t>\gamma t-\xi \sqrt{\log n} \)\ge 1-\varepsilon\ .
\end{align}
Therefore, for every $\varepsilon >0$ and $\xi > \bar \xi(\varepsilon)>0$,
\begin{align}
&\Prob\left(\alpha_t(u)> \log_2 n -b\sqrt{\log n}\,,\, \beta_t(u)=0 \right)\\
&\ge \Prob\left(\sum_{i=1}^{K_t}Z_i> \log_2 n -b\sqrt{\log n},\: \beta_t(u)=0 \right)\\
&\ge \Prob\left(\sum_{i=1}^{K_t}Z_i> \log_2 n -b\sqrt{\log n},\: K_t>\gamma t-\xi \sqrt{\log n} \right)-\varepsilon
\ge 1-2\varepsilon\ ,
\end{align}
where the last step follows by 
\begin{align}
\E\left[\sum_{i=1}^{\gamma t-\xi \sqrt{\log n}}Z_i \right]
&= \log_2 n -2 \left(\xi+c\right)\sqrt{\log n}\ ,
\end{align}
and the CLT by choosing $b=b(\varepsilon,c,\xi)>0$ large enough.
This completes the proof of Lemma \ref{lem:technical1}.
\end{proof}

\section{$L^2$-mixing on complete bipartite graphs}\label{sec:complete-bipartite-l2}
This section is devoted to the proof of Theorem \ref{th:K2}. We keep the same notation as 
 at the beginning of Section \ref{sec:complete-bipartite-l1}; in particular, $1\le m\le n/2$, and $|\cC_1|=m$, $|\cC_2|=n-m$. Furthermore, we remark that, in this context, $\Lrw$ of ${\rm RW}(G)$ reads as
\begin{equation}
	\Lrw \psi(x)=\frac{1}{2}\sum_{y\in V}\big(\psi(y)-\psi(x) \big)\left(\IND_{x\in \cC_1}\IND_{y\in \cC_2}+\IND_{x\in \cC_2}\IND_{y\in \cC_1}\right)\ ,\qquad x \in V\ ,\ 	 \psi :V\to \R \ ,
\end{equation}
from which it is straightforward to check that
\begin{equation}\label{eq:trel-complete-bipartite}
	t_{\rm rel}=\frac{2}{m}\ .
\end{equation}

Moreover, we remark that 
\begin{equation}\label{eq:worst-case-K2}
\text{the worst-case  initial condition is $\xi=\IND_{x_0}$ with $x_0\in \cC_2$}\ .
	\end{equation} Indeed,  observe that:
	\begin{itemize}
		\item starting with all the mass at a single site $x_0$ is worst-case by Proposition \ref{prop:worst-case}, and the dependence on $x_0$ is only through the subset $\cC_i$, $i=1,2$, such that $x_0\in\cC_i$;
		\item starting from $\eta=\IND_{x_0}$, $x\in V$, after the first effective update for ${\rm Avg}(G)$, the next configuration is $\frac{1}{2}\IND_{x_0}+\frac{1}{2}\IND_y$, for some $y\in V$ in $\cC_i\not\ni x_0$; further,  from that time on, mixing will not depend on neither $x_0$ nor $y$;
		\item  letting $\tau_i$ denote the random time of the first effective update when $x_0 \in \cC_i$,  
		\begin{align}
			\tau_i \sim \begin{cases}
				{\rm Exp}(n-m) &\text{if}\ i=1\\
				{\rm Exp}(m) &\text{if}\ i=2\ ;
			\end{cases} 
		\end{align}
		Hence, since $n-m\ge m$, such times can be coupled such that, a.s.,  $\tau_1\le \tau_2$.
	\end{itemize}

Before entering the details of the proof of the theorem (Sect.\ \ref{sec:proof-th:K2} below),  let us first discuss the particularly relevant case of the star graph, i.e., $G=K_{m,n-m}$ with $m=1$.
\subsection{A motivating example: the star graph}
 Set $m=1$,  and let $x_*$ denote the unique element in $\cC_1$ (i.e., the center of the star). As pointed out in Section \ref{suse:strategy-bipartite}, the $L^2$-mixing must occur between times $\frac{t_{\rm rel}}{2}\log n$ and $t_{\rm rel}\log n$. As shown in the following corollary, which specializes Theorem \ref{th:K2} to the setting $m=1$, the $L^2$-cutoff  occurs strictly in between these two extremes, namely around times $\frac{3}{4}t_{\rm rel}\log n$.

\begin{corollary}[$L^2$-cutoff on the star graph]\label{pr:star-l2}
	Let $T(a):= \frac{3}{4}t_{\rm rel}\left(\log n+ a \right)$, $a \in \R$. Then, 
\begin{equation}\label{eq:prop-star-l2-leaf}
	\lim_{n\to \infty}\sup_{\xi\in \Delta(V)} \E_{\xi}\left[\left\|\frac{\eta_{T(a)}}{\pi}-1 \right\|_{2}^2 \right] = e^{-a}\ ,\qquad a \in \R\ .
\end{equation}
\end{corollary}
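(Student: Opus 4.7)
The plan is to derive Corollary \ref{pr:star-l2} as a direct specialization of Theorem \ref{th:K2} to $m=1$; the only real work is verifying the limiting constants and reconciling the two slightly different time parametrizations.

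First I would set $b=\lim_{n\to\infty}m/n=0$ and observe that the prefactor in Theorem \ref{th:K2} evaluates to $1$. Indeed, plugging $b=0$ into \eqref{eq:Db} gives $D(0)=(3-\sqrt{9})/(2\sqrt{9})=0$, so $1+D(b)\to 1$, matching the right-hand side $e^{-a}$ of \eqref{eq:prop-star-l2-leaf}.

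Next I would Taylor-expand $\theta_{1,n-1}$ from \eqref{eq:def-rho}. Writing $x=\tfrac{32(n-1)}{9n^{2}}=O(1/n)$ and using $1-\sqrt{1-x}=x/2+O(x^{2})$, one gets
\begin{equation}
\theta_{1,n-1}=\frac{3n}{8}\left(\frac{x}{2}+O(x^{2})\right)=\frac{2(n-1)}{3n}+O(1/n)=\frac{2}{3}+O(1/n)\ .
\end{equation}
Since $t_{\rm rel}=2/m=2$ when $m=1$, the time in the corollary is $T(a)=\frac{3}{2}(\log n+a)$, whereas Theorem \ref{th:K2} is stated at the time $\widetilde{T}(a)=(\log n+a)/\theta_{1,n-1}$. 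Solving $T(a)=\widetilde{T}(a^{\sharp})$ gives $a^{\sharp}=\tfrac{3\theta_{1,n-1}-2}{2}\log n+\tfrac{3\theta_{1,n-1}}{2}\,a$, which by the expansion above equals $a+o(1)$ as $n\to\infty$, locally uniformly in $a$.

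To conclude I would invoke the monotonicity in $t$ of the map \eqref{eq:tfun} (noted right after its definition in the introduction) in order to sandwich, for $n$ large enough so that $|a^{\sharp}-a|<\varepsilon$, the quantity $\E_{\xi}[\|\eta_{T(a)}/\pi-1\|_{2}^{2}]$ between its values at the nearby times $\widetilde{T}(a\pm\varepsilon)$. Theorem \ref{th:K2} then drives these two bounds (uniformly in $\xi$) to $e^{-(a\mp\varepsilon)}$, and letting $\varepsilon\downarrow 0$ closes the sandwich to $e^{-a}$. I do not expect a genuine obstacle here: the argument is just an algebraic expansion followed by a soft monotonicity-plus-continuity step. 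The only mildly delicate point is that the $o(1)$ correction in $a^{\sharp}$ prevents a strict identity between $T(a)$ and $\widetilde{T}(a)$, which is precisely why the monotonicity observation is essential.
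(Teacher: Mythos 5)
Your proof is correct and is exactly the specialization route the paper intends (the paper introduces Corollary~\ref{pr:star-l2} as ``which specializes Theorem~\ref{th:K2} to the setting $m=1$'' without writing out the details). Your verification of $D(0)=0$, the Taylor expansion giving $\theta_{1,n-1}=\tfrac23+O(1/n)$, and the monotonicity-in-$t$ sandwich to reconcile the two time parametrizations (the theorem's $n$-dependent $\theta_{1,n-1}$ against the corollary's fixed constant $\tfrac34 t_{\rm rel}=\tfrac32$) is precisely the bookkeeping needed, and is correct.
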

Let us briefly comment over this result, with the aim of illustrating that the arguments used to derive  Proposition \ref{prop:AL}, while they provide a sharp $L^2$-estimate for the case of the complete graph $K_n$, they do not prove to be useful in the context of the star graph.

A simple computation shows that
\begin{align}
\frac{\dd}{\dd t}	\E_\xi\left[\left\|\frac{\eta_t}{\pi}-1 \right\|_{2}^2 \right]&=- \E_\xi\left[\cErw\left(\frac{\eta_t}{\pi} \right) \right] \\
&= - \frac{1}{t_{\rm rel}} \left( \E_\xi\left[\left\| \frac{\eta_t}{\pi}-1 \right\|_{2}^2 \right] +  \E_\xi\left[\left(\frac{\eta_t}{\pi}(x_*)-1 \right)^2 \right]\right),
\end{align}
 for all $\xi \in \Delta(V)$ and $t\ge 0$.
By integrating over time and setting $\xi=\IND_x$ for some $x\neq x_*$, 
\begin{equation}
	\label{eq:star-l2-identity}
	\E_{\IND_x}\left[\left\|\frac{\eta_t}{\pi}-1 \right\|_{2}^2 \right]= e^{-\frac{t}{t_{\rm rel}}} \left\|\frac{\IND_x}{\pi}-1 \right\|_{2}^2 - e^{-\frac{t}{t_{\rm rel}}} \int_0^t \frac{e^{\frac{s}{t_{\rm rel}}}}{t_{\rm rel}}\, \E_{\IND_x}\left[\left(\frac{\eta_s}{\pi}(x_*)-1 \right)^2 \right]\dd s
\ .
\end{equation}
Since $\left\|\frac{\IND_x}{\pi}-1 \right\|_2^2\approx n$,  the first term on the right-hand side above is divergent for $t=(1-\delta) t_{\rm rel}\log n$,  $\delta \in (0,1)$, and  drops down to order $\Theta(1)$ only if $t= t_{\rm rel}\left(\log n+a\right)$. Therefore,  $L^2$-mixing of ${\rm Avg}(G)$ at the strictly smaller time $T(a)=\frac{3}{4}t_{\rm rel}\left(\log n+a\right)$ must be due to cancellations of diverging terms between  first and second terms on the right-hand side of \eqref{eq:star-l2-identity}.
In particular, in view of the duality relation in \eqref{eq:duality-bin2}, controlling this latter term boils down to estimating the transition probabilities of ${\rm CRW}(G)$ in the time interval  $[0,T(a)]$.  This approach proves to be difficult and, instead of dealing with ${\rm CRW}(G)$, one might be tempted to bound the last term in \eqref{eq:star-l2-identity} by relying on the duality with the simpler process  {${\rm RW}(G)$}. In order to do so, one must use Jensen inequality, obtaining
	\begin{equation}\label{eq:lb}
		e^{-\frac{t}{t_{\rm rel}}} \int_0^t \frac{e^{\frac{s}{t_{\rm rel}}}}{t_{\rm rel}}\, \E_{\IND_x}\left[\left(\frac{\eta_s}{\pi}(x_*)-1 \right)^2 \right]\dd s\ge
		e^{-\frac{t}{t_{\rm rel}}}\int_0^t \frac{e^{\frac{s}{t_{\rm rel}}}}{t_{\rm rel}}\left(\E_{\IND_x}\left[\frac{\eta_s}{\pi}-1\right] \right)^2\dd s\ .
	\end{equation}
Nonetheless, it turns out that  the right-hand side of \eqref{eq:lb} is of order $O(1)$ for all $t\ge 0$, implying that \eqref{eq:star-l2-identity} combined with the lower bound in \eqref{eq:lb} is too weak to deduce the desired control. 

In the proof of Theorem \ref{th:K2} we show how to handle the transition probabilities of ${\rm CRW}(G)$  around times $T(a)$ to derive first order estimates on the $L^2$-distance without relying directly on \eqref{eq:star-l2-identity}. 

\subsection{Proof of Theorem \ref{th:K2}}\label{sec:proof-th:K2}
We now turn to the proof of Theorem \ref{th:K2} in the general case $m\in \{1,\ldots,\lfloor\frac{n}{2}\rfloor\}$. In preparation of this, we introduce
\begin{equation}\label{eq:Bb}
b \in [0,\tfrac{1}{2}]\longmapsto 	B(b):=\sqrt{9-32 b+32 b^2} \in [1,3]\ ,
\end{equation}
and note that the function $B(b)$  is non-increasing, as well as  $B(0)=3$ and $B(\frac{1}{2})=1$. Finally, we recall the definitions of $\theta=\theta_{m,n-m}$, $T(a)$, and $D(b)$ from the statement of  Theorem \ref{th:K2}, and present three remarks.

\begin{remark}[Asymptotics of $\theta$]\label{rem:theta}
	By a first-order Taylor expansion, 
	\begin{align}
		\lim_{n\to\infty}\theta_{m,n-m}&=\begin{dcases}\lim_{n\to\infty}\frac{3n}{8m}\left(1-\left(1-\frac{16}{9}\frac{m}{n}\left(1+o(1)\right)\right)\right)=\frac{2}{3} &\text{if}\  m/n\to 0\ ,\\
			\frac{3}{8b}
			\left(
			1-\sqrt{1-\frac{32}{9}b(1-b)}
			\right)\in [\tfrac{1}{2},\tfrac{2}{3}) &\text{if}\ m/n\to b\in (0,\tfrac{1}{2}]\ .
		\end{dcases}
	\end{align}
\end{remark}
\begin{remark}[$T(a)$ in terms of $t_{\rm rel}$]\label{rem:cutoff-trel}
	 Recall from \eqref{eq:trel-complete-bipartite} that $t_{\rm rel}=\frac{2}{m}$, and \eqref{eq:Bb}. Define
	\begin{align}
		b\in (0,\tfrac{1}{2}]\longmapsto 	C(b):=	\frac{4b}{3-B}\in (\tfrac{3}{4},1]\ ,
	\end{align}	
	and note that $C(b)$ is increasing on $(0,\frac{1}{2})$, as well as $C(0):=\lim_{b\downarrow 0} C(b)=\frac{3}{4}$, and $C(\frac{1}{2})=1$. Then, assuming that $\frac{m}{n}\to b\in [0,\frac{1}{2}]$, 	 $T(a)$ appearing in Theorem \ref{th:K2} can be rewritten in terms of the relaxation time using the following asymptotic equality: for all $a \in \R$,
	\begin{equation}\label{eq:tmix-as-fuc-of-trel}
		T(a):=\frac{1}{\theta m}\left(\log n+a\right)\sim 
		C(b)\,t_{\rm rel} \left(\log n+a\right)\ .
	\end{equation}

\end{remark}

\begin{remark}[Shape of the cutoff profile]\label{rem:profile}
The cutoff profile at time $T(a)$ is exponential in $a\in \R$, and depends on $b=\lim_{n\to \infty}m/n\in [0,1/2]$ through   $D(b)$ in \eqref{eq:Db}, which exhibits the following behavior:
	\begin{align}
		D(b)\in [0,\tfrac{\sqrt 6-2}{4}]\ ,\qquad \lim_{b\downarrow 0}D(b)=D(0)=D(\tfrac{1}{2})=0\ ,\qquad D(\tfrac{3}{8})=	\tfrac{\sqrt 6-2}{4}\ .
	\end{align}
This term $D(b)$ does not appear when considering $L^2$-mixing for ${\rm RW}(G)$ on $G=K_{m,n-m}$.
\end{remark}

\begin{proof}[Proof of Theorem \ref{th:K2}]
	Fix $x_0\in V$ and $t>0$;	 then, by duality in \eqref{eq:duality-bin2},
	\begin{align}\label{eq:l2-norm-bipartite}
		\begin{aligned}
			&	\E_{\IND_{x_0}}\left[\left\|\frac{\eta_t}{\pi}-1 \right\|_{2}^2 \right]= n\sum_{x\in V}\E_{\IND_{x_0}}\left[\left(\eta_t(x)\right)^2 \right]-1\\
			&= n\sum_{x\in\cC_1}\Purw_{x_0,x_0}\left(X_t=x,Y_t=x\right)+ n\sum_{y\in \cC_2} \Purw_{x_0,x_0}\left(X_t=y,Y_t=y\right)-1\ .
		\end{aligned}
	\end{align}
	In what follows, we analyze the above transition probabilities for ${\rm CRW}(G)$ by lumping some states and considering the following auxiliary Markov chain with five states
	\begin{align}
		0_1&:=\{(x,x):x \in \cC_1\}\\
		0_2&:=\{(y,y): y\in \cC_2\}\\
		1&:= \{(x,y)\cup (y,x): x\in \cC_1, y \in \cC_2\}\\
		2_1&:= \{(x,x'): x, x'\in \cC_1,x\neq x'\}\\
		2_2&:= \{(y,y'): y, y'\in \cC_2,y\neq y'\}\ ,
	\end{align}
	and corresponding $5\times 5$ transition rate matrix
	\begin{align}\label{eq:Q-rate-matrix}
		Q=\begin{pmatrix}
			-\frac{3(n-m)}{4} &\frac{n-m}{4} &\frac{n-m}{2} &0 &0\\
			\frac{m}{4} &-\frac{3m}{4} &\frac{m}{2} &0 &0\\
			\frac{1}{4} &\frac{1}{4} &-\frac{n-1}{2}&\frac{m-1}{2} &\frac{n-m-1}{2}\\
			0&0&n-m&-(n-m)&0
			\\
			0&0&m&0&-m
		\end{pmatrix}\ .
	\end{align}
and reversible measure $\mu:\{0_1,0_2,1,2_1,2_2\}\to [0,1]$ given by
\begin{align}\label{eq:nu}
	\mu=n^{-2}\begin{pmatrix}
		m&n-m& 2m(n-m)&m(m-1)&(n-m)(n-m-1)
	\end{pmatrix}\ .
\end{align}
This lumping allows us to rewrite  \eqref{eq:l2-norm-bipartite} as follows:
	\begin{align}
		\E_{\IND_{x_0}}\left[\left\|\frac{\eta_t}{\pi}-1 \right\|_{2}^2 \right]=
		n\, e^{tQ}(0_i,0_1)+ n\, e^{tQ}(0_i,0_2)-1\ ,\qquad x_0\in \cC_i\ ,\ i \in \{1,2\}\ .
	\end{align}
  Let $\{(\rho_j,\phi_j)\}_{j=0}^4$ denote the eigenvalue-eigenfunction pairs of $-Q$, with $(\rho_0=0, \phi_0\equiv1)$, and $\{\phi_j\}_{j=0}^4$ orthonormal basis in $L^2(\mu)$; hence, we have
	\begin{align}\label{eq:l2-norm-complete-bipartite}
		\begin{aligned}
		&\E_{\IND_{x_0}}\left[\left\|\frac{\eta_t}{\pi}-1 \right\|_{2}^2 \right]
		=
		n\, e^{tQ}(0_i,0_1)+ n\, e^{tQ}(0_i,0_2)-1\\
		&\qquad= n\, \mu(0_1)\sum_{j=0}^4 e^{-\rho_j t}  \phi_j(0_i)\, \phi_j(0_1)+n\, \mu(0_2)\sum_{j=0}^4 e^{-\rho_j t}  \phi_j(0_i)\, \phi_j(0_2)-1\\
		&\qquad= \sum_{j=1}^4e^{-\rho_j t}\phi_j(0_i)\left(\frac{m}{n}\phi_j(0_1)+\frac{n-m}{n}\phi_j(0_2)\right)\ ,\qquad x_0 \in \cC_i\ ,\ i \in \{1,2\}\ .
		\end{aligned}
	\end{align}
Furthermore, the following eigenpair $(\rho_2,\phi_2)$ for $-Q$ is explicit:
\begin{align}\label{eq:tilde-lambda-4}
	\rho_2=\frac{n}{2}\ ,\qquad \phi_2	&= \sqrt{ \frac{1}{n-m}}\begin{pmatrix}-\frac{n-m}{m} & 1 &\frac{n-2m}{2m}&-\frac{n-m}{m} &1
	\end{pmatrix}\ .
\end{align}
 Although the remaining eigenvalues $\rho_1, \rho_3, \rho_4$ and eigenfunctions $\phi_1, \phi_3, \phi_4$ of $-Q$ are, in general, not explicit, we now provide sharp asymptotic estimates which 
 will yield the desired claim. 

Recall from \eqref{eq:worst-case-K2} that, in order to analyze the worst-case mixing, it suffices to consider $x_0\in \cC_2$. Hence, with such a choice, \eqref{eq:l2-norm-complete-bipartite} reads as
\begin{align}\label{eq:expression-L2-norm}
	\E_{\IND_{x_0}}\left[\left\|\frac{\eta_t}{\pi}-1 \right\|_{2}^2 \right]= \sum_{j=1}^4e^{-\rho_j t}\, \Psi_j\ ,
\end{align} 
with 
\begin{align}\label{eq:Phij}
	\Psi_j:=\phi_j(0_2)\left(\frac{m}{n}\phi_j(0_1)+\frac{n-m}{n}\phi_j(0_2)\right)\ ,\qquad j=1,\ldots, 4\ .
\end{align}
Recall that $\lim_{n\to \infty}\frac{m}{n}=b\in [0,\frac{1}{2}]$. In view of \eqref{eq:expression-L2-norm}, we conclude the proof of the theorem as soon as
 we show that, for $t=T(a)$, $a \in \R$, as given  in \eqref{eq:T-K2},  the following three claims hold, as $n\to \infty$:
\begin{align}\label{eq:first-eigenvalue-eigenfunction-asymptotics}
	\Psi_1=\left(1+D(b)+o(1)\right)n\ ,\qquad 	e^{-\rho_1 t}\,n = \left(1+o(1)\right) e^{-a}\ ,
\end{align}
and
\begin{equation}\label{eq:vanishing-j234}
	\sum_{j=2}^4 e^{-\rho_jt}\,\Psi_j=o(1)\ .
\end{equation}
 Actually, in order to have the latter claim in   \eqref{eq:vanishing-j234}, it will suffice to prove 
 \begin{equation}\label{eq:large-evals}
 	\min\left\{\rho_2,\rho_3,\rho_4 \right\}\ge \frac{2}{5}\, n\ .
 \end{equation}
Indeed, since $\mu(0_2)=\frac{n-m}{n^2}$ and $\|\phi_j\|_{L^2(\mu)}=1$, we get, for all $j=0,\ldots, 4$,
\begin{align}
	|\phi_j(0_2)|\le \frac{n}{\sqrt{n-m}}\ ,
\end{align} as well as, recalling that $\mu(0_1)=\frac{m}{n^2}$,
\begin{align}
\Psi_j:=	\frac{m}{n}\phi_j(0_1)+\frac{n-m}{n}\phi_j(0_2)\le \sqrt{\frac{m}{n}\left(\phi_j(0_1)\right)^2+\frac{n-m}{n}\left(\phi_j(0_2)\right)^2}\le \sqrt n\ .
\end{align}
By combining these two estimates with that in \eqref{eq:large-evals}, we get
\begin{align}\label{eq:j234}
	&\left|	\sum_{j=2}^4 e^{-\rho_jt}\,\Psi_j\right|
	\le 3 \sqrt 2\,n\, e^{-\frac{2}{5}nt}=o(1)\ ,
\end{align}
where for the last step we used the expression of $t=T(a)=\frac{1}{\theta m}\left(\log n+a\right) $ and $ \frac{2}{5}\frac{n}{\theta m}>1$.

We are now left with proving \eqref{eq:large-evals} and the two claims in \eqref{eq:first-eigenvalue-eigenfunction-asymptotics} about approximating the quantity $\Psi_1$ and the eigenvalue $\rho_1$.
Recall the definition of $Q$ in \eqref{eq:Q-rate-matrix}, of its reversible measure $\mu$ in $\eqref{eq:nu}$ and $\{(\rho_j,\phi_j)\}_{j=0}^4$ of its eigenpairs; then, 
letting $U:=\diag(\sqrt \mu)$, we introduce
\begin{align}
	W:= \tfrac{1}{n}\,U(-Q)U^{-1}-(U\phi_0)(U\phi_0)^\top\ .
\end{align} 
It is immediate to check that $W$ is symmetric and that its eigenpairs $\{(\rho_j^W,\varphi^W_j)\}_{j=0}^4$ are related to those of $-Q$ as follows: $\varphi_j^W=U \phi_j$ for all $j=0,\ldots, 4$, while
\begin{align}\label{eq:eigenvalues-T}
	\rho_0^W=-1\neq 0=\rho_0\ ,\qquad \rho_j^W=\tfrac{1}{n}\rho_j\ ,\quad j=1,\ldots, 4\ .
\end{align}
Furthermore, it is simple to check that $W$ decomposes as a sum $W=S+R$, with $S$ being a block-diagonal matrix and $R=O(n^{-1/2})$, i.e., all entries of $R$ are of order $O(n^{-1/2})$. More precisely, letting $b=b_n:=\frac{m}{n}\in (0,\frac{1}{2}]$, $S$ reads as
\begin{align}
	S=\left(\begin{array}{c|c}
	S_0 & 0\\
	\hline
	0&S_1
	\end{array} \right)\ ,\qquad \text{with}\quad S_0 := \left(\begin{array}{cc}
	\frac{3}{4} \left(1-b\right) & -\frac{1}{4} \sqrt{b
	\left(1-b\right)}\\
	-\frac{1}{4} \sqrt{b \left(1-b\right)} & \frac{3}{4}b
	\end{array}\right)\ ,
\end{align}
and
\begin{align}
	S_1:=\left(
	\begin{array}{ccc}
		 \frac{\left(1-2b\right)^2}{2} & -\frac{\sqrt{b\left(1-b\right)
		}\left(1+2b\right)}{\sqrt{2}} & -\frac{\sqrt{b
				\left(1-b\right)}\left(3-2b\right)}{\sqrt{2}} \\
	-\frac{\sqrt{b\left(1-b\right)
		}\left(1+2b\right)}{\sqrt{2}} & 1-b\left(1+b\right) & -b\left(1-b\right) \\
	 -\frac{\sqrt{b \left(1-b\right)}\left(3-2b\right)}{\sqrt{2}}
		&- b\left(1-b\right) & b\left(3-b\right)-1 \\
	\end{array}
	\right)\ .
\end{align}
The eigenvalues $\{\rho_j^S\}_{j=0}^4$ of $S$ are then the union of those of $S_0$ and $S_1$, which  are explicit:
\begin{align}
	{\rm spec}(S_0)=\left\{\frac{3-B}{8},\frac{3+B}{8}\right\}\ ,\qquad {\rm spec}(S_1)=\left\{-1,\frac{1}{2},1\right\}\ ,
\end{align}
where $B=B(b)$ was defined in \eqref{eq:Bb}.	In particular, the smallest eigenvalue $\rho_0^S$ equals $-1$,  the second smallest $\rho_1^S$ is $\frac{3-B}{8}$, and, since $B\in [1,3]$, $\rho_j^S\ge \frac{1}{2}$ for all $j=2,\ldots, 4$. This simple observation implies the following two facts:
\begin{enumerate}
	\item on the one side, since $W=S+R$,  $R=O(n^{-1/2})$ and \eqref{eq:eigenvalues-T}, Weyl theorem for eigenvalues of sums of Hermitian matrices (see, e.g., \cite[Th.\ 4.3.1]{horn_matrix_2012}) yields 
	\begin{align}\label{eq:weyl}
		\left|\tfrac{1}{n}\rho_j- \rho_j^S\right| =  \left|\rho_j^W-\rho_j^S \right|= O(n^{-1/2})\ ,\qquad j=1,\ldots, 4\ ,
	\end{align} 
and, in turn, \eqref{eq:large-evals};
\item on the other side, we obtain $	\delta_1^S:=\min_{j\neq 1}|\rho_1^S-\rho_j^S|\ge \frac{1}{4}$.
\end{enumerate} 
By employing the latter claim and $R=O(n^{-1/2})$, we estimate the eigenfunction $\phi_1$ of $-Q$ as follows. By applying Davis-Kahan  theorem \cite{davis_kahan_rotation_1970} to the $L^2(1)$-normalized eigenfunctions $\varphi_1^W=U\phi_1$ and $\varphi_1^S$ associated to the second-smallest eigenvalues of $W$ and $S$, respectively, we obtain 
\begin{equation}\label{eq:davis-kahan}
	\|U\phi_1-\varphi_1^S \|_{L^2(1)}=\|\varphi_1^W-\varphi_1^S\|_{L^2(1)} \le\frac{\|R\|_{L^2(1)}}{\delta_1^S}=O(n^{-\frac{1}{2}})\ .
\end{equation}
Then, due to the simple form of $S$ and $S_0$, the $L^2(1)$-normalized eigenfunction $\varphi_1^S$ is explicit and given by $\varphi_1^S=\frac{1}{Z}(A,1,0,0,0)$, where \begin{equation}
	A=A(b):=\frac{6b-3+B}{2\sqrt{b\left(1-b\right)}}\ ,\qquad Z:=\sqrt{A^2+1}\ .
\end{equation} 
Note that if $\frac{m}{n}=b=o(1)$, then 
$A(b)=(1+o(1))\frac13 \sqrt{b}$, while $A(b)\in (0,\infty)$ if $b\in (0,\frac{1}{2}]$.
By combining  \eqref{eq:davis-kahan} with the definitions of $\varphi_1^S$,  $U:=\diag(\sqrt n)$ and $\Psi_1$ (see \eqref{eq:Phij}), 	
 we get
\begin{equation}\label{eq:Phi1-general}
	\begin{split}
		\Psi_1
		&=\left(1+o(1)\right)n\left(\frac{A}{A^2+1}\frac{\sqrt{b}}{\sqrt{1-b}}+\frac{1}{A^2+1} \right)
		=n\left(1+D+o(1)\right)\ ,
	\end{split}
\end{equation}
where $D=D(b)$ was given in \eqref{eq:Db}; this corresponds to the first claim in \eqref{eq:first-eigenvalue-eigenfunction-asymptotics}.

In order to prove the second claim in \eqref{eq:first-eigenvalue-eigenfunction-asymptotics} for \emph{all} values of $b=\frac{m}{n}$, we need the following  estimate for $\rho_1$:
\begin{align}\label{eq:eigenvalue-first-sharp}
	\rho_1=m  \left(\theta+O\left(\frac{1}{n}\right)\right)\ ,
\end{align}
which is, in general, sharper than the one we obtained in \eqref{eq:weyl} above.	
To this aim, we analyze the characteristic polynomial $\lambda\mapsto p(\lambda)$ of $-Q$ divided by $\left(\lambda-\rho_0\right)\left(\lambda-\rho_4\right)=\lambda\left(\lambda-\frac{n}{2}\right)$:
\begin{equation}\label{eq:def-q}
	q(\lambda):=\frac{p(\lambda)}{\lambda\left(\lambda-\frac{n}{2}\right)}
	= -\lambda^3 + \frac{7n-2}{4}\lambda^2+\frac{2m^2+(1-2m)n-3n^2}{4}\lambda+\frac{mn\left( n-m\right)}{2}\ ,
\end{equation}
and show that, within  a window of order $O(\frac{m}{n})$ around $ \theta m$, 	$q(\lambda)$ changes sign; since $\frac{m}{n}\ll\theta m < \frac{1}{3} n$,  $\min\{\rho_2,\rho_3,\rho_4\}>\frac{2}{5}n$ (see \eqref{eq:large-evals}) and $\rho_0=0$, 	this would ensure the validity of \eqref{eq:eigenvalue-first-sharp}.
Recall the definition of $\theta$ in \eqref{eq:def-rho}; then, writing $\frac{m}{n}=b\in (0,\frac{1}{2}]$, for any $ c\in\R$, we have
\begin{equation}
	q\left(bn \left(\theta+\frac{c}{n}\right)\right)=- \frac{1+o(1)}{64}\left(3-B+b\left(18c+10B-16\right)+b^2\left(16-64c \right)+64c b^3  \right)n^2 \ .
\end{equation}
Recall that $B\in [1,3)$ if $b\in (0,\frac{1}{2}]$, while $B(b)=3-\frac{16}{3}b+O(b^2)$ if $b=o(1)$. In either case, by choosing $c=c(b)\in \R$ large (resp.\ small) enough, the expression in parenthesis above becomes positive (resp.\ negative). This concludes the proof of the theorem.  
	\end{proof}

\appendix
\section{Proof of  (\ref{eq:eigenvaluessss2})}\label{app:est-hardy}
Recall that $\nu(j):=2^{-d}\binom{d}{j}$, $j\in \{0,1,\ldots, d\}$, and that we want to prove
\begin{align}\label{eq:eigenvaluess2}
	\limsup_{d\to \infty}\sup_{1\le M\le d/2}\left(\lambda^P_{M,0}\right)^{-1} \log\left(\frac{1}{ \nu([0,M))}\right)<\infty\ .
\end{align}
We divide the proof into two steps.

\subsection{Discrete weighted Hardy's inequality}

We start by estimating $\lambda_{M,0}^P$ by means of a discrete weighted Hardy's inequality (see, e.g., \cite{miclo_example_1999}).
\begin{lemma}\label{lemma:hardy}
	For every $M\in \{1,\ldots, d/2\}$, define
	\begin{align}\label{eq:CM}
		C_M:= \max_{0\le k\le M-1}\nu([0,k])\sum_{j=k}^{M-1}\frac{1}{\nu(j)(d-j)}\ .
	\end{align}
	Then, 
	\begin{align}
		\frac{1}{4C_M}\le	\lambda_{M,0}^P\le 	\frac{1}{C_M}\ .
	\end{align}
\end{lemma}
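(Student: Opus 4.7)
The plan is to reduce the lemma to a discrete weighted Hardy inequality of Muckenhoupt type, for which the two-sided bound in terms of $C_M$ is classical; see \cite{miclo_example_1999}.

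First, I would write down the variational characterization of $\lambda^P_{M,0}$. The generator $-\cL^P|_M$ of the Ehrenfest urn killed upon exiting $\{0,1,\ldots,M-1\}$ is symmetric and positive definite on $L^2(\{0,\ldots,M-1\},\nu)$. Using detailed balance $\nu(j)\,p^P_j=\nu(j+1)\,q^P_{j+1}$ for the binomial $\nu$ in \eqref{eq:nu-bin}, together with summation by parts, one obtains the Dirichlet form
\begin{align}
\langle f, (-\cL^P|_M)f\rangle_\nu = \sum_{j=0}^{M-1}\nu(j)(d-j)\bigl(f(j+1)-f(j)\bigr)^2\ ,
\end{align}
for any $f:\{0,\ldots,M-1\}\to \R$, extended by the Dirichlet convention $f(M):=0$. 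Consequently,
\begin{align}\label{eq:plan-var}
\bigl(\lambda^P_{M,0}\bigr)^{-1}
= \sup_{f \not\equiv 0,\, f(M)=0}\,
\frac{\sum_{j=0}^{M-1}\nu(j)\,f(j)^2}{\sum_{j=0}^{M-1}\nu(j)(d-j)\bigl(f(j+1)-f(j)\bigr)^2}\ .
\end{align}

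Second, I would invoke the discrete weighted Hardy inequality: for any positive weights $a_j$ and $b_j$ on $\{0,\ldots,M-1\}$, the optimal constant $A_M$ in
\begin{align}
\sum_{j=0}^{M-1}b_j\,f(j)^2 \le A_M\sum_{j=0}^{M-1}a_j\bigl(f(j+1)-f(j)\bigr)^2\ ,
\end{align}
taken over all $f$ with $f(M)=0$, satisfies $B_M\le A_M\le 4\,B_M$, where
\begin{align}
B_M:=\max_{0\le k\le M-1}\Bigl(\sum_{j=0}^{k}b_j\Bigr)\Bigl(\sum_{j=k}^{M-1}\frac{1}{a_j}\Bigr)\ .
\end{align}
Specializing $b_j=\nu(j)$ and $a_j=\nu(j)(d-j)$, the quantity $B_M$ coincides exactly with $C_M$ as defined in \eqref{eq:CM}. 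Combining this with \eqref{eq:plan-var} gives $C_M \le (\lambda^P_{M,0})^{-1}\le 4\,C_M$, which is the claim of the lemma.

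The two ingredients above are essentially routine. The only point requiring some care is the correct identification of the Dirichlet form of the \emph{killed} chain (as opposed to the reflected chain on $\{0,\ldots,M-1\}$): killing at $M$ produces the additional boundary term $\nu(M-1)(d-M+1)\,f(M-1)^2$, which is precisely accounted for by the convention $f(M)=0$ in the sum above. Once this is in place, the matching of weights with those in Hardy's inequality is immediate, and no further computation is needed here; the subsequent step~\ref{eq:eigenvaluess2} of bounding $(\lambda^P_{M,0})^{-1}\log(1/\nu([0,M)))$ will be carried out by directly estimating the binomial tail sums in $C_M$ uniformly in $1\le M\le d/2$.
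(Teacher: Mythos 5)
Your argument is correct and is essentially the same as the paper's: express $(\lambda^P_{M,0})^{-1}$ via the variational formula with Dirichlet boundary at $M$, identify the Dirichlet form weights $\nu(j)(d-j)$ through detailed balance, and apply the discrete weighted Hardy/Muckenhoupt two-sided bound with $b_j=\nu(j)$, $a_j=\nu(j)(d-j)$, recognizing the Muckenhoupt quantity as $C_M$. The paper phrases the Hardy inequality with $\varLambda=\tfrac{1}{4C_M}$ and $g(k)=f(k)-f(k+1)$, but this is the same statement you use.
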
	
\begin{proof}
Recall that $\lambda_{M,0}^P> 0$ is the largest value $\lambda>0$ satisfying
\begin{align}
	\lambda\,	\nu(f^2)\le 	 \cE_P(f)\  ,\qquad f \in \cF_{M-1}:= \left\{f :\{0,1,\ldots, d\}\to \R: 
	f=0\ \text{on}\ [M,d] 
	\right\}\ .
\end{align}
We now apply the following version of weighted Hardy's inequality (e.g., \cite{miclo_example_1999} or \cite[Prop.\ A.3]{chen_saloff-coste_mixing_bd_chains_2013}): for all positive measures $\pi, \mu$ on $\{0,1,\ldots, M-1\}$, define
\begin{align}
	\varLambda=\varLambda_{\pi,\mu}:= \frac{1}{4}\left(\max_{0\le k\le M-1} \pi([0,k])\sum_{j=k}^{M-1}\frac{1}{\mu(j)}\right)^{-1}\ ;
\end{align}
then
\begin{align}
	\varLambda	\sum_{k=0}^{M-1}\left(\sum_{j=k}^{M-1}g(j)\right)^2 \pi(k)\le 	 \sum_{k=0}^{M-1} \left(g(k)\right)^2 \mu(k)\ ,\qquad g:\{0,1,\ldots, M-1\}\to \R\ .
\end{align}
Applying this to $\pi(k):= \nu(k)$, $\mu(k):= \nu(k)(d-k)$, we get, setting, for every $f\in \cF_{M-1}$,  $g(k):= f(k)-f(k+1)$, \begin{equation}4 \varLambda\ge 	\lambda_{M,0}^P\ge \varLambda\ .
\end{equation}
This concludes the proof of the lemma.
\end{proof}

In view of Lemma \ref{lemma:hardy}, \eqref{eq:eigenvaluess2} is actually equivalent to
\begin{align}\label{eq:final_eigenvalue_CM}
\limsup_{d\to \infty}\sup_{1\le M\le d/2} C_M\, \log \left(\frac{1}{\nu([0,M))}\right)<\infty\ ,
\end{align}
where $C_M>0$ is given in \eqref{eq:CM}.
The rest of this section is devoted to the proof of  \eqref{eq:final_eigenvalue_CM}.
As a first step, by straightforward monotonicity arguments, 	we obtain the following upper bounds:
\begin{align}
&\sup_{1\le 	M\le d/2} C_M \log\left(\frac{1}{\nu([0,M))}\right)\\
& = \sup_{1\le M\le d/2}\sup_{0\le k\le M-1} \sum_{j=k}^{M-1}\frac{\nu([0,k])}{\nu(j)(d-j)}\left(-\log\nu([0,M-1])\right)\\	
&\le\sup_{1\le M\le d/2}\sup_{0\le k\le M-1}
\frac{1}{d-M+1} \sum_{j=k}^{M-1}\frac{\nu([0,k])}{\nu(j)}\left(-\log\nu([0,M-1])\right)\\
&\le\frac{3}{d}\sup_{0\le k<d/2}\nu([0,k])\sup_{k+1\le M\le d/2} \left(\sum_{j=k}^{M-1}\frac{1}{\nu(j)}\right)\left(-\log\nu([0,M-1])\right)\\
&\le 3 \sup_{0\le k <d/2} \frac{1}{d}\, \nu([0,k])\log\left(\frac{1}{\nu([0,k])}\right)\left(\sum_{j=k}^{d/2-1}\frac{1}{\nu(j)}\right)\ .
\end{align}
We have just showed that, by defining
\begin{align}
\varGamma(k)=\varGamma_d(k):=\frac{1}{d}\, \nu([0,k])\log\left(\frac{1}{\nu([0,k])}\right)\left(\sum_{j=k}^{d/2-1}\frac{1}{\nu(j)}\right)\ ,\qquad 0\le k <d/2\ ,
\end{align}
we have
\begin{align}\label{eq:final}
\sup_{1\le M\le d/2}C_M \log\left(\frac{1}{\nu([0,M))}\right)\le 3\, \sup_{0\le k<d/2} \varGamma(k)\ .
\end{align}

\subsection{Estimating $\varGamma$}
Recall from, e.g.,  \cite{alfers_dinges_normal_1984} the following representation for the binomial distribution:
\begin{align}\label{eq:alfers-dinges}
\nu(k):=\,2^{-d}\binom{d}{k}=\frac{1}{\sqrt{2\pi d}}\frac{\exp\left(-d H(\tfrac{k}{d})+\cS^{(d)}(\tfrac{k}{d})\right)}{\sqrt{\tfrac{k}{d}\left(1-\frac{k}{d}\right)}}\ ,\qquad 1\le k\le d/2-1\ ,
\end{align}
where
\begin{align}
H(p)=H(p||\tfrac{1}{2})&:= p\log\frac{p}{1/2}+(1-p)\log\frac{1-p}{1/2}\ ,\qquad p \in [0,1]\ ,
\end{align}
and $\cS^{(d)}:(0,1)\to \R$ is a function whose explicit form is unimportant for our purposes, yet  satisfies the following bounds:
\begin{align}
\cS^{(d)}(p):= \cS(d)-\cS(pd)-\cS((1-p)d)\ ,\qquad \cS(m)\in\left(-\frac{m^{-3}}{360},\frac{m^{-1}}{12} \right)\ .
\end{align}
In particular,  there exist constants $s_1<0<s_2$ such that
\begin{align}\label{eq:Sdkd}
\cS^{(d)}(\tfrac{k}{d})\in \left(s_1,s_2\right)\ ,\qquad 1\le k<d/2\ .
\end{align}

It is now straightforward	 to check that $\nu$ satisfies the  \emph{convexity hypothesis}, shortly	 ${\rm CONV}(c,\bar d)$, in \cite[App.\ A]{cancrini_martinelli_roberto_logarithmic_2002}, with $\gamma=\nu$, $n_{\rm min}=0 $, $n_{\rm max}=d$, 	 $\bar n= \lceil d/2\rceil$, and some  $c=c(s_1,s_2)\ge2$, therein. More precisely,	
\begin{enumerate}
\item $
c^{-1}\,d/2\le d-d/2\le c\,d/2$ is trivial;
\item  by monotonicity of $\nu$ on $\{0,1,\ldots, d/2\}$, 
\begin{align}\label{eq:conv-1}
	\frac{\nu(k-1)}{\nu(k)}\le 1\le  c \exp\left(-\frac{d/2- k}{c\,	 (d/2)}\right)\ ,\qquad 1 \le k \le d/2\ ;
\end{align}
\item by \eqref{eq:alfers-dinges}, \eqref{eq:Sdkd}, and	
\begin{align}
	2\left(\tfrac{1}{2}-p\right)^2\le	H(p)\le 4 \log 2 \left(\tfrac{1}{2}-p\right)^2\ ,\qquad p\in [0,1]\ ,
\end{align}
we have, for all $d\in \N$ large enough and $0\le k<d/2$,
\begin{align}
	\frac{1}{c\sqrt{d/2}} \exp\left(-2 c d \left(\frac{1}{2}-\frac{k}{d}\right)^2\right)\le \nu(k)\le \frac{1}{c\sqrt{d/2}}\exp\left(-\frac{2d}{c}\left(\frac{1}{2}-\frac{k}{d}\right)^2\right)\ .	
\end{align}
\end{enumerate}
As a consequence of these three conditions and \cite[Lem.\ A.2--4]{cancrini_martinelli_roberto_logarithmic_2002}, we have, for some $C>0$ independent of $d$, and all $d$ large enough,
\begin{equation}
\nu([0,k])
\le C\, \frac{d/2}{d/2-k}\, \nu(k)\ ,\qquad 0\le k<d/2\ ,
\end{equation}
\begin{equation}
\sum_{j=k}^{d/2-1}\frac{1}{\nu(j)}\le C\, \frac{d/2}{d/2-k}\, \frac{1}{\nu(k)}\ ,\qquad 0\le k<d/2\ ,
\end{equation}
and
\begin{equation}
\log\left(\frac{1}{\nu([0,k])}\right) \le  C\left(1+d\left(\frac{1}{2}-\frac{k}{d}\right)^2\right)\ ,\qquad 0\le k\le d/2-\sqrt{d/2}\ .
\end{equation}
Finally, combining these estimates as done in the proof of \cite[Prop.\ A.5]{cancrini_martinelli_roberto_logarithmic_2002} yields
\begin{align}
\limsup_{d\to \infty}\sup_{0\le k<d/2} \varGamma(k)<\infty\ ,
\end{align}
thus, by \eqref{eq:final}, \eqref{eq:final_eigenvalue_CM}. This concludes the proof.

\subsubsection*{Acknowledgments}
F.S.\ wishes to thank Universit\`{a} Roma Tre, Dipartimento di Matematica e Fisica,  for the very kind hospitality during an early stage of this work.

M.Q.\ thanks the German Research Foundation (project number 444084038, priority program SPP2265) for financial support.  F.S.\ gratefully acknowledges funding by
the Lise Meitner fellowship, Austrian Science Fund (FWF): M3211.

P.C.\ thanks the Miller Institute for Basic Research in Science for funding his visit to  UC Berkeley during the Fall 2022.


\begin{thebibliography}{CDSZ22}
	
	\bibitem[AD84]{alfers_dinges_normal_1984}
	D.~Alfers and H.~Dinges.
	\newblock A normal approximation for beta and gamma tail probabilities.
	\newblock {\em Z. Wahrsch. Verw. Gebiete}, 65(3):399--420, 1984.
	
	\bibitem[AL12]{aldous_lecture_2012}
	David Aldous and Daniel Lanoue.
	\newblock A lecture on the averaging process.
	\newblock {\em Probab. Surv.}, 9:90--102, 2012.
	
	\bibitem[Aud07]{audenaert_sharp_2007}
	Koenraad M.~R. Audenaert.
	\newblock A sharp continuity estimate for the von {N}eumann entropy.
	\newblock {\em J. Phys. A}, 40(28):8127--8136, 2007.
	
	\bibitem[BC21]{bristiel_caputo_entropy_2021}
	Alexandre Bristiel and Pietro Caputo.
	\newblock Entropy inequalities for random walks and permutations.
	\newblock {\em Ann. Inst. Henri Poincar\'{e} Probab. Stat. {\normalfont(to appear)},
		arXiv:2109.06009}, 2021.
	
	\bibitem[BS87]{brown_shao_1987}
	Mark Brown and Yi-Shi Shao.
	\newblock Identifying {C}oefficients in the {S}pectral {R}epresentation for
	{F}irst {P}assage {T}ime {D}istributions.
	\newblock {\em Probab. Eng. Inf. Sci.}, 1(1):69–74, 1987.
	
	\bibitem[BT06]{bobkov2006modified}
	Sergey~G. Bobkov and Prasad Tetali.
	\newblock Modified logarithmic {S}obolev inequalities in discrete settings.
	\newblock {\em J. Theoret. Probab.}, 19(2):289--336, 2006.
	
	\bibitem[CDSZ22]{chatterjee2020phase}
	Sourav Chatterjee, Persi Diaconis, Allan Sly, and Lingfu Zhang.
	\newblock A phase transition for repeated averages.
	\newblock {\em Ann. Probab.}, 50(1):1--17, 2022.
	
	\bibitem[CMR02]{cancrini_martinelli_roberto_logarithmic_2002}
	N.~Cancrini, F.~Martinelli, and C.~Roberto.
	\newblock The logarithmic {S}obolev constant of {K}awasaki dynamics under a
	mixing condition revisited.
	\newblock {\em Ann. Inst. H. Poincar\'{e} Probab. Statist.}, 38(4):385--436,
	2002.
	
	\bibitem[CP21]{caputo_parisi_block_2021}
	Pietro Caputo and Daniel Parisi.
	\newblock Block factorization of the relative entropy via spatial mixing.
	\newblock {\em Comm. Math. Phys.}, 388(2):793--818, 2021.
	
	\bibitem[CSC13]{chen_saloff-coste_mixing_bd_chains_2013}
	Guan-Yu Chen and Laurent Saloff-Coste.
	\newblock On the mixing time and spectral gap for birth and death chains.
	\newblock {\em ALEA Lat. Am. J. Probab. Math. Stat.}, 10(1):293--321, 2013.
	
	\bibitem[CSC15]{chen_saloff-coste_computing_2015}
	Guan-Yu Chen and Laurent Saloff-Coste.
	\newblock Computing cutoff times of birth and death chains.
	\newblock {\em Electron. J. Probab.}, 20:no. 76, 47, 2015.
	
	\bibitem[DK70]{davis_kahan_rotation_1970}
	Chandler Davis and W.~M. Kahan.
	\newblock The rotation of eigenvectors by a perturbation. {III}.
	\newblock {\em SIAM J. Numer. Anal.}, 7:1--46, 1970.
	
	\bibitem[DLP10]{ding_lubetzky_peres_total_variation_2010}
	Jian Ding, Eyal Lubetzky, and Yuval Peres.
	\newblock Total variation cutoff in birth-and-death chains.
	\newblock {\em Probab. Theory Related Fields}, 146(1-2):61--85, 2010.
	
	\bibitem[DSC96]{diaconis1996logarithmic}
	P.~Diaconis and L.~Saloff-Coste.
	\newblock Logarithmic {S}obolev inequalities for finite {M}arkov chains.
	\newblock {\em Ann. Appl. Probab.}, 6(3):695--750, 1996.
	
	\bibitem[DSC06]{diaconis_saloff-coste_separation_2006}
	Persi Diaconis and Laurent Saloff-Coste.
	\newblock Separation cut-offs for birth and death chains.
	\newblock {\em Ann. Appl. Probab.}, 16(4):2098--2122, 2006.
	
	\bibitem[HJ12]{horn_matrix_2012}
	Roger~A. Horn and Charles~R. Johnson.
	\newblock {\em Matrix analysis}.
	\newblock Cambridge University Press; New York, second edition, 2012.
	
	\bibitem[HP18]{hermon_peres_characterization_2018}
	Jonathan Hermon and Yuval Peres.
	\newblock A characterization of {$L_2$} mixing and hypercontractivity via
	hitting times and maximal inequalities.
	\newblock {\em Probab. Theory Related Fields}, 170(3-4):769--800, 2018.
	
	\bibitem[KM65]{karlin_mcgregor_ehrenfest_1965}
	Samuel Karlin and James McGregor.
	\newblock Ehrenfest urn models.
	\newblock {\em J. Appl. Probability}, 2:352--376, 1965.
	
	\bibitem[Lig05]{liggett_interacting_2005-1}
	Thomas~M. Liggett.
	\newblock {\em Interacting particle systems}.
	\newblock Classics in Mathematics. Springer-Verlag, Berlin, 2005.
	\newblock Reprint of the 1985 original.
	
	\bibitem[LP17]{levin2017markov}
	David~A. Levin and Yuval Peres.
	\newblock {\em Markov chains and mixing times}.
	\newblock American Mathematical Society, Providence, RI, 2017.
	\newblock Second edition of [MR2466937], with contributions by Elizabeth L.
	Wilmer, with a chapter on ``Coupling from the past'' by James G. Propp and
	David B. Wilson.
	
	\bibitem[LS14]{lubetzky_sly_2014}
	Eyal Lubetzky and Allan Sly.
	\newblock Cutoff for general spin systems with arbitrary boundary conditions.
	\newblock {\em Comm. Pure Appl. Math.}, 67(6):982--1027, 2014.
	
	\bibitem[Mic99]{miclo_example_1999}
	L.~Miclo.
	\newblock An example of application of discrete {H}ardy's inequalities.
	\newblock {\em Markov Process. Related Fields}, 5(3):319--330, 1999.
	
	\bibitem[MSW22]{movassagh_repeated2022}
	Ramis Movassagh, Mario Szegedy, and Guanyang Wang.
	\newblock {R}epeated {A}verages on {G}raphs.
	\newblock {\em arXiv:2205.04535}, 2022.
	
	\bibitem[QS21]{quattropani2021mixing}
	Matteo Quattropani and Federico Sau.
	\newblock Mixing of the {A}veraging process and its discrete dual on
	finite-dimensional geometries.
	\newblock {\em Ann. Appl. Probab. {\normalfont(to appear)}, arXiv:2106.09552}, 2021.
	
\end{thebibliography}
\end{document}